\theoremstyle{plain}
\newtheorem*{thm*}{Theorem}
\newtheorem{thm}{Theorem}
\newtheorem{thml}{Theorem}
\newtheorem{prop}[thm]{Proposition}
\newtheorem{lem}[thm]{Lemma}
\theoremstyle{definition}
\newtheorem{defn}[thm]{Definition}
\newtheorem{claim}{Claim}[thm]
\newtheorem{step}{Step}
\newtheorem{question}{Question}
\newcommand{\intr}{\mathrm{int}}
\newcommand{\diam}{\mathrm{diam}}
\newcommand{\dsup}{d_{\mathrm{sup}}}
\newcommand{\id}{\mathrm{id}}
\newcommand{\setends}[1]{\mathcal{E}(#1)}
\begin{document}

\title[Homogeneous plane continua]{A complete classification of\\homogeneous plane continua}
\author{L. C. Hoehn \and L. G. Oversteegen}
\date{\today}

\address[L.\ C.\ Hoehn]{Nipissing University, Department of Computer Science \& Mathematics, 100 College Drive, Box 5002, North Bay, Ontario, Canada, P1B 8L7}
\email{loganh@nipissingu.ca}

\address[L.\ G.\ Oversteegen]{University of Alabama at Birmingham, Department of Mathematics, Birmingham, AL 35294, USA}
\email{overstee@uab.edu}

\dedicatory{Dedicated to Andrew Lelek on the occasion of his 80th birthday}

\thanks{The first named author was partially supported by NSERC grant RGPIN 435518, and by the Mary Ellen Rudin Young Researcher Award}

\subjclass[2010]{Primary 54F15; Secondary 54F65}
\keywords{homogeneous, plane, continua, pseudo-arc, span, hereditarily indecomposable}

\begin{abstract}
We show that the only compact and connected subsets (i.e.\ \emph{continua}) $X$ of the plane $\mathbb{R}^2$ which contain more than one point and are homogeneous, in the sense that the group of homeomorphisms of $X$ acts transitively on $X$, are, up to homeomorphism, the circle $\mathbb{S}^1$, the pseudo-arc, and the circle of pseudo-arcs.  These latter two spaces are fractal-like objects which do not contain any arcs.  It follows that any compact and homogeneous space in the plane has the form $X \times Z$, where $X$ is either a point or one of the three homogeneous continua above, and $Z$ is either a finite set or the Cantor set.

The main technical result in this paper is a new characterization of the pseudo-arc.  Following Lelek, we say that a continuum $X$ has \emph{span zero} provided for every continuum $C$ and every pair of maps $f,g: C \to X$ such that $f(C) \subset g(C)$ there exists $c_0 \in C$ so that $f(c_0) = g(c_0)$.  We show that a continuum is homeomorphic to the pseudo-arc if and only if it is hereditarily indecomposable (i.e., every subcontinuum is indecomposable) and has span zero.
\end{abstract}

\maketitle

\section{Introduction}
\label{sec:intro}

By a \emph{compactum}, we mean a compact metric space, and by a \emph{continuum}, we mean a compact connected metric space.  A continuum is \emph{non-degenerate} if it contains more than one point.  We refer to the space $\mathbb{R}^2$, with the Euclidean topology, as \emph{the plane}.  By a \emph{map} we mean a continuous function.

A space $X$ is (topologically) \emph{homogeneous} if for every $x,y \in X$ there exists a homeomorphism $h: X \to X$ with $h(x) = y$.  All homeomorphisms in this paper are onto.

The concept of topological homogeneity was first introduced by Sierpi{\'n}ski in \cite{sierpinski20}.  Since the underlying/ambient space of many topological models is homogeneous, the classification of homogeneous spaces has a long and rich history.  For example, all connected manifolds are homogeneous, and the Hilbert cube $[0,1]^{\mathbb{N}}$,  which contains a homeomorphic copy of every compact metric space, is an example of an infinite dimensional homogeneous continuum.  Even for low dimensions, the classification of homogeneous Riemannian manifolds remains an active area of research today.  Contrary to naive expectation, homogeneous continua do not necessarily have a simple local structure (in particular, they do not need to contain a manifold).  As a consequence, even the classification of one-dimensional homogeneous continua appears out of reach.  This paper concerns the classification of homogeneous compact subsets of the plane.

In the first volume of Fundamenta Mathematicae in 1920, Knaster and Kuratowski \cite{KK20} asked (Probl\`{e}me 2) whether the circle is the only (non-degenerate) homogeneous plane continuum.  Mazurkiewicz \cite{mazurkiewicz24} showed early on that the answer is yes if the continuum is locally connected.  Cohen \cite{cohen51} showed that the answer is yes if the continuum is arcwise connected or, equivalently, pathwise connected, and Bing \cite{bing60} proved more generally that the answer remains yes if the continuum  simply contains an arc.  A continuum $X$ is \emph{decomposable} if it is the union of two proper subcontinua and \emph{indecomposable} otherwise.  A continuum is \emph{hereditarily decomposable (hereditarily indecomposable)} if every non-degenerate subcontinuum is decomposable (indecomposable, respectively).  Hagopian \cite{hagopian75} showed that the answer to the question of Knaster and Kuratowski is still yes if the continuum merely contains a hereditarily decomposable subcontinuum.

Probl\`{e}me 2 by Knaster and Kuratowski was formally solved by Bing \cite{bing48} who showed in 1948 that the pseudo-arc, described in detail in Section \ref{sec:pseudo-arc}, is another homogeneous plane continuum.  The pseudo-arc is a one-dimensional fractal-like hereditarily indecomposable continuum (in particular it contains no arcs).  This stunning example of a homogeneous continuum shows that homogeneity is possible at two extremes: one where the local structure is simple (e.g.\ for locally connected spaces) and one where the local structure is not simple (e.g.\ for not locally connected spaces).  Since Bing's surprising solution, the question has been: What are all homogeneous plane continua?  A third homogeneous plane continuum, called the circle of pseudo-arcs (since it admits an open map to the circle whose point pre-images are all pseudo-arcs), was added by Bing and Jones \cite{BJ59} in 1954.  We show in this paper that these three comprise the complete list of all homogeneous non-degenerate plane continua.

Even though hereditarily indecomposable continua seem to be obscure objects, they arise naturally in mathematics, for example as attractors in dynamical systems \cite{KY94} (even for an open set of parameters).

Another hereditarily indecomposable continuum, the \emph{pseudo-circle}, was considered to be a strong candidate to be an additional example of a homogeneous plane continuum.  However, it was proved to be not homogeneous independently by Fearnley \cite{fearnley69} and Rogers \cite{rogers70}.

This long-standing question of the classification of all homogeneous plane continua has been raised and/or addressed in several papers and surveys, including: \cite{jones69}, \cite{jones73}, \cite{jones75}, \cite{lewis80}, \cite{lewis83}, \cite{lewis84}, \cite{rogers83}, \cite{rogers92}, \cite{rudin97}, and the ``New Scottish Book'' (Problem 920).  The first explicit statement concerning this problem that we could find is in \cite{jones51}.

There exists a rich literature concerning homogeneous continua (including several excellent surveys, such as \cite{lewis92}, \cite{rogers83}, and \cite{rogers92}) so we will only briefly state some pertinent highlights here.

In 1954, Jones proved the following result:

{\renewcommand*{\thethml}{\Alph{thml}}
\begin{thml}[\cite{jones55}]
\label{thml:homog decomp circle}
If $M$ is a decomposable homogeneous continuum in the plane, then $M$ is a circle of mutually homeomorphic indecomposable homogeneous continua.
\end{thml}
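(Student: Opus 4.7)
The plan is to construct Jones's \emph{aposyndetic decomposition} of $M$ and, using planarity, identify the quotient as a simple closed curve. Recall that $M$ is \emph{aposyndetic at $x$ with respect to $y$} if some subcontinuum of $M$ contains $x$ in its interior and misses $y$. Declare $x \sim y$ if $M$ is non-aposyndetic at $x$ with respect to $y$. The first task is to verify, using the transitive action of the homeomorphism group of $M$, that $\sim$ is symmetric, transitive, and closed, so that it induces an upper semicontinuous decomposition $\mathcal{D} = \{L_x : x \in M\}$ of $M$ into mutually homeomorphic continua.

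Next I would analyze the blocks $L_x$. Because $M$ is decomposable, the decomposition $\mathcal{D}$ must be nontrivial: if $L_x = M$ for some (hence every) $x$, then a standard composant argument would force $M$ to be indecomposable, contradicting the hypothesis. Each $L_x$ is therefore a proper subcontinuum of $M$, and one checks directly from the definition of $\sim$ that $L_x$ is \emph{terminal} in $M$ in the sense that any subcontinuum of $M$ meeting $L_x$ either contains $L_x$ or is contained in $L_x$. From terminality and the fact that the homeomorphism group of $M$ permutes the blocks transitively, one deduces that each $L_x$ is itself a homogeneous continuum, and that all blocks are mutually homeomorphic. Indecomposability of each $L_x$ then follows because a decomposable $L_x$ would contain a proper subcontinuum witnessing aposyndesis between two of its points.

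The quotient $T = M/\mathcal{D}$ is then a continuum which is homogeneous (since the decomposition is invariant under $\mathrm{Homeo}(M)$) and aposyndetic (by construction of $\sim$). Now planarity enters crucially: since $M \subset \mathbb{R}^2$ and the blocks of $\mathcal{D}$ are terminal subcontinua, a cut-point / crosscut argument lets one push the planar embedding of $M$ down to a planar-like embedding of $T$. Combining this with the aposyndesis of $T$ one shows $T$ is locally connected, and then Mazurkiewicz's theorem (quoted in the introduction) forces $T \cong \mathbb{S}^1$. Composing the quotient map with this homeomorphism exhibits $M$ as a circle of mutually homeomorphic indecomposable homogeneous continua, as required.

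The main obstacle is the last step: showing that the quotient $T$ is a simple closed curve rather than some other homogeneous aposyndetic continuum. This is precisely where planarity of $M$ must be used in a nontrivial way; the cyclic order and separation properties of the plane, together with the terminality of the blocks, have to be combined to rule out more branched or higher-genus quotients and to produce the local connectedness of $T$ needed to invoke Mazurkiewicz. The remaining steps are either standard facts from decomposition theory or direct applications of the results of Mazurkiewicz and Bing cited earlier.
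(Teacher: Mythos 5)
You should first note that the paper does not prove this statement at all: Theorem~\ref{thml:homog decomp circle} is imported verbatim from Jones \cite{jones55} and used as a black box, so the only meaningful comparison is with Jones's published argument. Your outline does correctly identify that argument's strategy --- the aposyndetic decomposition $\mathcal{D}=\{L_x\}$, the indecomposability and homogeneity of the tranches, and the identification of the quotient with $\mathbb{S}^1$ --- so the approach is the right one. But as written this is a proof plan rather than a proof, and the steps you treat as ``standard facts'' or ``direct verifications'' are precisely where the content of Jones's theorem lives. Concretely: (a) the symmetry, transitivity, and closedness of the non-aposyndesis relation, and the fact that each $L_x$ is a subcontinuum, are substantial theorems about homogeneous continua, not routine consequences of transitivity of the homeomorphism group (modern treatments get the needed uniformity from the Effros theorem, which was unavailable to Jones and which you do not invoke); (b) to conclude that $M$ ``is a circle of'' continua in the sense used here one needs the decomposition to be \emph{continuous} and the quotient map to be open, not merely upper semicontinuous, and you never address lower semicontinuity; (c) terminality of the $L_x$ and the deduction of indecomposability of each tranche are asserted with only a one-clause gesture at the argument.

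The most serious gap is the one you candidly flag yourself: the final step. To apply Mazurkiewicz \cite{mazurkiewicz24} to $T=M/\mathcal{D}$ you need $T$ to be a \emph{plane} continuum and to be locally connected, and neither is automatic. The quotient of a planar continuum by an upper semicontinuous decomposition need not be planar; one must show the tranches do not separate the plane (or otherwise invoke Moore's decomposition theorem), and ``a cut-point / crosscut argument lets one push the planar embedding down'' is not an argument. Likewise, aposyndesis plus homogeneity does not imply local connectedness in general (the product of the pseudo-arc with $\mathbb{S}^1$ is a homogeneous aposyndetic continuum that is not locally connected), so planarity must enter this step in an essential and currently unspecified way. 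Since the conclusion of the theorem is exactly that $T\cong\mathbb{S}^1$, leaving this step as an acknowledged ``main obstacle'' means the proof is not complete; you would need to either reconstruct Jones's argument that an aposyndetic homogeneous plane continuum is a simple closed curve, or cite it as a separate prior result.
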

}

The conclusion of this theorem implies that there is an indecomposable homogeneous continuum $X$ (possibly a single point) and an open map from $M$ to the circle all of whose point preimages are homeomorphic to $X$.  Bing and Jones \cite{BJ59} constructed in 1954 such a continuum in the plane for which $X$ is the pseudo-arc, and proved it is homogeneous.  This example is known as the \emph{circle of pseudo-arcs} (see Section \ref{sec:pseudo-arc}).

It follows from this theorem of Jones that every decomposable homogeneous continuum in the plane separates the plane.  Rogers \cite{rogers81} proved that conversely, every homogeneous plane continuum which separates the plane is decomposable.

Hagopian (see also \cite{jones69}) obtained in 1976 the following result:

{\renewcommand*{\thethml}{\Alph{thml}}
\begin{thml}[\cite{hagopian76}]
\label{thml:homog indec hered indec}
Every indecomposable homogeneous plane continuum is hereditarily indecomposable.
\end{thml}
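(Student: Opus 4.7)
The plan is to argue by contradiction: assume $X \subset \mathbb{R}^2$ is indecomposable and homogeneous but not hereditarily indecomposable. Then $X$ contains a proper subcontinuum $K$ admitting a nontrivial decomposition $K = A \cup B$, with $A, B$ proper subcontinua of $K$, $A \cap B \neq \emptyset$, and neither containing the other. Since $X$ is indecomposable, $K$ is nowhere dense in $X$ and lies in a single composant of $X$. The two general tools I would exploit are (i) Effros's theorem on homogeneous compacta — for every $\varepsilon > 0$ there is $\delta > 0$ such that any two points within distance $\delta$ in $X$ are related by a self-homeomorphism $h$ of $X$ with $\dsup(h, \id) < \varepsilon$; and (ii) the Property of Kelley (automatic for homogeneous continua), which lets me take controlled Hausdorff limits of subcontinua as basepoints vary.

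First, I would use Effros to transport $K$ freely over small neighbourhoods of any chosen point of $X$, producing a continuous family of decomposable subcontinua of $X$. Combined with the Property of Kelley, this shows that every point of $X$ lies in a decomposable subcontinuum which is a Hausdorff limit of Effros translates of $K$, so decomposable proper subcontinua occur in a uniform and homogeneous way throughout $X$.

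The crucial step exploits planarity. For each complementary domain $U$ of $X$ in $\mathbb{R}^2 \cup \{\infty\}$, a classical theorem asserts that the set of points of $X$ accessible from $U$ is an $F_\sigma$ subset of $X$ contained in a single composant of $X$. Homogeneity together with Effros forces these accessible points to be dense in $X$, so we may assume the distinguished point $p \in A \cap B$ is accessible from some $U$. Now I would invoke Jones's aposyndesis machinery for planar continua: the presence of an accessible decomposition $K = A \cup B$ at the accessible point $p$ constrains the local planar structure of $X$ at $p$ (roughly, it exhibits two independent "sides" at $p$ that cannot both belong to the same composant). Transporting this two-sided structure over $X$ via Effros and then taking Kelley limits of the resulting subcontinua should exhibit $X$ itself as a union of two proper subcontinua, contradicting the indecomposability of $X$.

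The main obstacle is exactly this last planar step: converting "$K = A \cup B$ decomposable at an accessible point" into a concrete proper decomposition of $X$. I would organise it via the prime-end compactification of $U$ together with Jones's structure theorem describing the set of points at which a planar continuum fails to be aposyndetic, and I expect that an inductive reduction — replacing $K$ by a smaller decomposable subcontinuum whenever one is available inside $K$ — will be needed in order to bring the scale of $K$ below the Effros $\delta$ that the planarity argument requires. A secondary technical point is ensuring that the Hausdorff limits of Effros translates of $K$ do not collapse to a subcontinuum of $A$ or $B$ and so remain genuinely decomposable; this I would control by choosing the basepoint $p$ in the interior (relative to $K$) of $A \cap B$ when possible, and otherwise by a compactness argument on the hyperspace of subcontinua of $X$.
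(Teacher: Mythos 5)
This statement is Theorem B of the paper, which is not proved there at all: it is quoted from Hagopian's 1976 paper, so there is no in-paper argument to compare yours against. Judged on its own terms, your proposal is a plausible assembly of the standard toolkit (Effros's theorem, the Property of Kelley, accessibility, prime ends, Jones's aposyndesis theory), but it is not a proof: the step that carries all the content of the theorem is exactly the one you defer. You write that the two-sided structure at an accessible point of the decomposable subcontinuum $K$, transported by Effros and closed up under Kelley limits, ``should exhibit $X$ itself as a union of two proper subcontinua,'' and then you name this as the main obstacle. That is the theorem; everything before it is setup. Nothing in the proposal explains how a decomposition of a nowhere dense subcontinuum $K$, living inside a single composant, propagates to a decomposition of $X$ -- and it cannot propagate naively, since composants are first-category $F_\sigma$ sets and unions of small homeomorphic copies of $K$ will not obviously close up into two proper subcontinua covering $X$. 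Hagopian's actual argument is a long and delicate planar analysis, and the reduction you hope for is not a routine consequence of the tools you list.

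Two secondary points would also need repair even if the main step were supplied. First, decomposability is not preserved under Hausdorff limits, so your claim that Kelley limits of Effros translates of $K$ yield decomposable subcontinua through every point needs an argument, not just a choice of basepoint in $A \cap B$; a limit of continua each written as $A_n \cup B_n$ can perfectly well be indecomposable. Second, the assertion that the set of points of $X$ accessible from a complementary domain lies in a single composant is not a correct statement of the classical accessibility results for indecomposable plane continua, and the subsequent claim that homogeneity forces accessible points to be dense needs care (for a non-separating indecomposable continuum density of accessible points is automatic and has nothing to do with homogeneity, while the separating case is excluded by Jones's theorem rather than by your argument). As it stands the proposal is a research plan with an acknowledged hole at its center, not a proof.
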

}

A map $f: X \to Y$ is called an \emph{$\varepsilon$-map} if for each $y \in Y$, $\diam(f^{-1}(y)) < \varepsilon$.  A continuum $X$ is \emph{arc-like} (respectively, \emph{tree-like}) provided for each $\varepsilon > 0$ there exists an $\varepsilon$-map from $X$ to an arc (respectively, tree).  Bing \cite{bing51} proved in 1951 that the pseudo-arc is the only hereditarily indecomposable arc-like continuum.  Hence, to show that an indecomposable homogeneous plane continuum is homeomorphic to the pseudo-arc, by the results of Hagopian and Bing it suffices to show that it is arc-like.

The main idea of our proof is based on a generalization of the following simple fact, which is central to much work done with the pseudo-arc.

\medskip\noindent
\textbullet\ Let $f: [0,1] \to [0,1]$ be a piecewise linear map.  For any $\varepsilon > 0$, if $g: [0,1] \to [0,1]$ is a sufficiently crooked map, then there is a map $h: [0,1] \to [0,1]$ such that $f \circ h$ is $\varepsilon$-close to $g$.
\medskip

\noindent
See Section \ref{sec:pseudo-arc} for a formal definition of ``crookedness''.  See also Theorem \ref{thm:factor fold} and Theorem \ref{thm:lift} below for related properties.

We will prove a generalization of the above statement, where instead of $[0,1]$ we consider graphs, and we restrict to a certain class of piecewise linear maps $f$.  To describe how this result pertains to the study of homogeneous plane continua, we provide some context below.

It is in general a difficult task to prove that a given continuum is (or is not) arc-like.  A closely related notion, introduced by Lelek in 1964 \cite{lelek64}, is that of span zero.  A continuum $X$ has \emph{span zero} if for any continuum $C$ and any two maps $f,g: C \to X$ such that $f(C) \subseteq g(C)$, there exist $p \in C$ with $f(p) = g(p)$ (by \cite{davis84} this is equivalent to the traditional definition of span zero where the images of $f$ and $g$ coincide).  It is easy to see that every arc-like continuum has span zero \cite{lelek64}.  Moreover, in some cases it is easier to show that a continuum $X$ has span zero than to show that it is arc-like.  For example, the following theorem was obtained in the early 1980's:

{\renewcommand*{\thethml}{\Alph{thml}}
\begin{thml}[\cite{OT82}]
\label{thml:homog span zero}
Every homogeneous indecomposable plane continuum has span zero.
\end{thml}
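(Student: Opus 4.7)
I argue by contradiction: suppose $X$ is a homogeneous indecomposable plane continuum whose span is positive. By Theorem~B, $X$ is hereditarily indecomposable, and by the theorem of Rogers mentioned above, $X$ does not separate $\mathbb{R}^2$, so $\mathbb{R}^2 \setminus X$ is connected. The failure of span zero gives a continuum $C$ and maps $f,g : C \to X$ with $f(C) \subseteq g(C)$ and $f(c) \neq g(c)$ for every $c \in C$; packaging this as $Z = \{(f(c),g(c)) : c \in C\} \subseteq (X \times X) \setminus \Delta$, where $\Delta$ is the diagonal, yields a continuum satisfying $\pi_1(Z) \subseteq \pi_2(Z)$.

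The first step is to upgrade $Z$ to a continuum $W \subseteq (X \times X) \setminus \Delta$ with $\pi_1(W) = \pi_2(W) = X$. Using Zorn's lemma, pass to a subcontinuum $Z_0 \subseteq Z$ that is minimal with respect to the property $\pi_1(Z_0) \subseteq \pi_2(Z_0)$; hereditary indecomposability of $X$ together with this minimality forces $\pi_1(Z_0) = \pi_2(Z_0)$. Homogeneity, via the diagonal self-homeomorphisms $h \times h$ of $X \times X$ (which preserve $\Delta$), lets me translate $Z_0$ throughout $X \times X$, and a Kuratowski-limit construction over a suitable net of such translates then produces $W$ with both projections equal to $X$ while remaining disjoint from $\Delta$.

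The second and harder step is to extract a contradiction using planarity. Fix an embedding $X \subseteq \mathbb{R}^2$; for each pair $(a,b) \in W$ the connectedness of $\mathbb{R}^2 \setminus X$ lets me join $a$ to $b$ by an arc meeting $X$ only at its endpoints, and the aim is to select such arcs coherently in $(a,b)$ via Carath\'eodory prime-end theory applied to the complementary domain of $X$. The union of a coherent family of these arcs with $X$ is then a planar continuum whose attachment pattern should force either a proper non-degenerate decomposable subcontinuum of $X$, contradicting Theorem~B, or a coincidence $(x,x) \in W$, contradicting the construction of $W$. The main obstacle is precisely this last step: organising the arc-selection continuously and converting the resulting planar combinatorics into a decomposition of a subcontinuum of $X$ or a diagonal point of $W$. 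Planarity is essential here, since the only tools available for this conversion --- prime ends and complementary-domain theory --- are intrinsically two-dimensional.
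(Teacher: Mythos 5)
This statement is Theorem~C of the paper, which is quoted from \cite{OT82} and not proved here; there is no in-paper argument to compare against, so your proposal has to stand on its own. It does not: it is an outline with several unjustified steps, and you yourself flag the decisive one as an open ``obstacle.'' Concretely: (i) the claim that a subcontinuum $Z_0 \subseteq Z$ minimal with respect to $\pi_1(Z_0) \subseteq \pi_2(Z_0)$ must satisfy $\pi_1(Z_0) = \pi_2(Z_0)$ is not obvious and is not a consequence of hereditary indecomposability in any way you indicate; the equivalence of the ``$\subseteq$'' and ``$=$'' versions of span zero is itself a nontrivial theorem of Davis \cite{davis84}, and in any case it gives equality of the two projections, not surjectivity onto $X$. (ii) The Kuratowski-limit step is broken as stated: if $Z_0 \subseteq (X \times X) \smallsetminus \Delta$, the translates $(h \times h)(Z_0)$ are each disjoint from $\Delta$, but there is no uniform lower bound on $d(h(a),h(b))$ as $h$ ranges over all self-homeomorphisms of $X$, so a Hausdorff limit of such translates can perfectly well meet the diagonal. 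Making this uniform is exactly where one needs Effros's theorem (homeomorphisms moving points less than $\varepsilon$), which you never invoke and which is the standard engine behind post-1975 homogeneity arguments, including the one in \cite{OT82}.

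(iii) The final step --- selecting arcs in the complementary domain coherently via prime ends and converting the resulting configuration into either a decomposable subcontinuum or a diagonal point of $W$ --- is stated as an aim (``should force''), not carried out. This is not a routine verification; it is the entire content of the theorem, and it is where the planarity hypothesis actually does work. As written, the proposal is a plausible strategy statement rather than a proof, and the parts that are asserted rather than hoped for (the minimality claim and the limit construction) contain genuine errors or unsupported leaps. To repair it you would at minimum need Effros's theorem to control the translates uniformly away from $\Delta$, a correct argument (or citation to \cite{davis84}) for upgrading $\pi_1(W) \subseteq \pi_2(W)$ to $\pi_1(W) = \pi_2(W) = X$, and a complete execution of the prime-end argument.
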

}

It was a long standing open problem whether each continuum of span zero is arc-like.  Unfortunately the answer was shown to be negative in \cite{hoehn11}.  The example given in \cite{hoehn11} relied heavily on the existence of patterns which required the continuum to contain arcs.  Such patterns are not possible for hereditarily indecomposable continua.  Indeed, using our generalization of the above result about crooked maps between arcs, we prove the following in this paper:

\begin{thm}
\label{thm:hered indec span zero}
A non-degenerate continuum $X$ is homeomorphic to the pseudo-arc if and only if $X$ is hereditarily indecomposable and has span zero.
\end{thm}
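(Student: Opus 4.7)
The ``only if'' direction is classical: the pseudo-arc is hereditarily indecomposable by construction and is arc-like by definition, and Lelek's original paper \cite{lelek64} shows that every arc-like continuum has span zero. So it remains to prove the converse.

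Assume $X$ is a non-degenerate hereditarily indecomposable continuum of span zero. By Bing's theorem that the pseudo-arc is the unique hereditarily indecomposable arc-like continuum (cited in the excerpt), it suffices to prove that $X$ is arc-like, i.e.\ that for every $\varepsilon > 0$ there is an $\varepsilon$-map $X \to [0,1]$. Fix $\varepsilon > 0$. Since $X$ has span zero, a standard argument gives that $X$ is tree-like (in particular one-dimensional), so for any $\eta > 0$ there is a surjective $\eta$-map $\phi:X\to G$ onto a finite graph $G$. Moreover, because $X$ is hereditarily indecomposable, by passing to a refinement of $\phi$ in an inverse system approximating $X$ we may assume $\phi$ is as crooked as we wish with respect to any prescribed finite open cover of $G$.

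The heart of the argument is to apply the paper's main technical result (the generalization of the classical crooked-factorization fact announced in the introduction as Theorems \ref{thm:factor fold} and \ref{thm:lift}): given a sufficiently crooked map $\phi:X\to G$ from an hereditarily indecomposable continuum of span zero, one can construct a piecewise linear map $f:[0,1]\to G$ together with a map $h:X\to[0,1]$ so that $\dsup(f\circ h,\phi)$ is arbitrarily small. The span zero hypothesis is crucial here: it is what allows the middle space of the factorization to be an arc rather than merely a tree, by ruling out ``crossing'' obstructions at the branch points of $G$ that would otherwise force the lift through a non-degenerate branching.

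Finally, by compactness and the assumption that $\phi$ is an $\varepsilon/2$-map, there is $\eta' > 0$ such that $d(\phi(x_1),\phi(x_2)) < \eta'$ implies $d(x_1,x_2) < \varepsilon$. Choosing the approximation in the previous step closer than $\eta'/2$ ensures that $h(x_1) = h(x_2)$ forces $d(\phi(x_1),\phi(x_2)) < \eta'$, hence $d(x_1,x_2) < \varepsilon$; thus $h$ is an $\varepsilon$-map to an arc, completing the proof of arc-likeness. The principal obstacle is clearly the factorization in the third paragraph: the classical statement (for $G = [0,1]$) is a well-known tool in pseudo-arc theory, but extending it to graphs $G$ with branch points requires delicate combinatorial control of the crooked preimage structure of $\phi$ together with a careful use of span zero to assemble the maps $f$ and $h$ compatibly across the branch points, and this combinatorial construction is the technical content of the paper.
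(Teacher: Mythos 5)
The outer structure is right (reduce to arc-likeness and invoke Bing's characterization), but the step that carries all the weight --- the claim in your third paragraph that a sufficiently crooked map $\phi: X \to G$ can be approximately factored as $f \circ h$ with $h: X \to [0,1]$ and $f: [0,1] \to G$ --- is not established, and it is not what Theorems \ref{thm:factor fold} and \ref{thm:lift} say. Theorem \ref{thm:factor fold} factors a map $X \to G$ through a simple fold $F \to G$, where $F$ is another \emph{graph}; Theorem \ref{thm:lift} lifts $f: X \to G$ into a neighborhood of a separator of $G \times \{0\}$ from $G \times \{1\}$ in $G \times [0,1]$, so that the \emph{first} coordinate of the lift approximates $f$. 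Neither produces an arc as the intermediate space, and neither can on the strength of hereditary indecomposability alone: there exist hereditarily indecomposable tree-like continua that are not arc-like (Ingram's examples with positive span), and every step of your argument except the unexplained appeal to span zero applies verbatim to such continua. So the assertion that span zero ``rules out crossing obstructions at the branch points'' is precisely the missing proof, not a reduction of it.

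What the paper actually does with span zero is quantitative and quite different. Theorem \ref{thm:span separator} (a compactness argument on continua in $G \times I$ accumulating on a subcontinuum of $X \times X$ off the diagonal) shows: for every $\varepsilon$ there is $\delta$ so that whenever a graph $G$ and an arc $I$ are both within Hausdorff distance $\delta$ of $X$ in the Hilbert cube, the set $M = \{(x,y) : d(x,y) < \varepsilon/6\}$ \emph{separates} $G \times \{p\}$ from $G \times \{q\}$ in $G \times I$. One then takes $G = T$ a tree with a map $f: X \to T$ satisfying $\dsup(f, \id_X) < \delta$, applies Theorem \ref{thm:lift} (whose proof is the stairwell and simple-fold machinery, using only hereditary indecomposability) to get $h: X \to M$ with $\dsup(f, \pi_1 \circ h) < \varepsilon/6$, and observes that $\pi_2 \circ h: X \to I$ is then uniformly close to $\id_X$, hence an $\varepsilon$-map onto an arc. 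Your proposal contains no analogue of the separator $M$, no choice of an arc Hausdorff-close to $X$, and no concrete mechanism by which span zero enters; to repair it you would need either to supply the combinatorial factorization you describe (which would amount to an independent proof of the theorem) or to restructure the argument around the separator as above. Your final paragraph, deducing that $h$ is an $\varepsilon$-map from the approximate factorization, is fine as far as it goes, but it rests entirely on the unproven step.
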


We suspect that this result will be useful in other contexts as well, for example, in the classification of attractors in certain dynamical systems.

It follows immediately from Theorems \ref{thml:homog indec hered indec} and \ref{thml:homog span zero} above and our Theorem \ref{thm:hered indec span zero} that every indecomposable non-degenerate homogeneous plane continuum is a pseudo-arc.  Combining this with Theorem \ref{thml:homog decomp circle} above, we obtain the following classification of homogeneous plane continua:

\begin{thm}
\label{thm:homog plane cont}
Up to homeomorphism, the only non-degenerate homogeneous continua in the plane are:
\begin{enumerate}
\item The circle;
\item The pseudo-arc; and
\item The circle of pseudo-arcs.
\end{enumerate}
\end{thm}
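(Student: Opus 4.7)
The plan is to reduce Theorem \ref{thm:homog plane cont} to the conjunction of the three cited prior results and the paper's main Theorem \ref{thm:hered indec span zero}; the argument is a short case analysis on indecomposability, and essentially all of the depth is concentrated in its ingredients rather than in this final deduction.

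Let $M$ be a non-degenerate homogeneous continuum in the plane. I would first handle the case that $M$ is indecomposable. By Theorem \ref{thml:homog indec hered indec} of Hagopian, $M$ is then hereditarily indecomposable; by Theorem \ref{thml:homog span zero} of Oversteegen--Tymchatyn, $M$ has span zero. Applying Theorem \ref{thm:hered indec span zero} directly yields that $M$ is homeomorphic to the pseudo-arc.

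If instead $M$ is decomposable, Theorem \ref{thml:homog decomp circle} of Jones exhibits $M$ as a circle of mutually homeomorphic indecomposable homogeneous continua. As noted in the paragraph following that theorem, this produces an indecomposable homogeneous continuum $X$ and an open surjection $\pi: M \to \mathbb{S}^1$ whose point preimages are all homeomorphic to $X$. Since any fibre of $\pi$ embeds in $M \subset \mathbb{R}^2$, $X$ itself is a homogeneous plane continuum. If $X$ is degenerate, $\pi$ is a homeomorphism and $M \cong \mathbb{S}^1$. If $X$ is non-degenerate, the previous paragraph identifies $X$ with the pseudo-arc, so $M$ is a circle of pseudo-arcs; because, up to homeomorphism, there is only one such homogeneous continuum in the plane (the Bing--Jones example of \cite{BJ59}), $M$ is the circle of pseudo-arcs. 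These three possibilities exhaust the alternatives and together give the stated classification.

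The main obstacle in this argument is therefore not the case analysis above, which is essentially bookkeeping, but the establishment of the new characterization encoded in Theorem \ref{thm:hered indec span zero}. The invocation of Hagopian's and the Oversteegen--Tymchatyn theorems reduces homogeneity to the combination of hereditary indecomposability and span zero, and the serious work is to upgrade this combination to arc-likeness, which by Bing's theorem then identifies the space as the pseudo-arc. I would expect the bulk of the technical effort to lie in the paper's generalization of the crookedness/factoring lemma for piecewise linear self-maps of the arc, applied to obtain $\varepsilon$-maps from a hereditarily indecomposable span-zero continuum onto arcs for each $\varepsilon > 0$.
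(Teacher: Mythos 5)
Your proposal is correct and follows essentially the same route as the paper: the indecomposable case is handled by combining Hagopian's theorem, the Oversteegen--Tymchatyn span-zero theorem, and Theorem \ref{thm:hered indec span zero}, and the decomposable case by Jones's theorem together with the Bing--Jones uniqueness of the circle of pseudo-arcs. No gaps; this matches the deduction given in the paper's introduction.
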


Finally, if $Y$ is a homogeneous compactum then by \cite{MR89} (see also \cite{AO90} and \cite{MR90}) $Y$ is homeomorphic to $X \times Z$, where $X$ is a homogeneous continuum and $Z$ is a $0$-dimensional homogeneous compactum and, hence, either a finite set or the Cantor set.  Thus we obtain the following corollary:

\begin{thm}
\label{thm:homog plane comp}
Up to homeomorphism, the only homogeneous compact spaces in the plane are:
\begin{enumerate}
\item Finite sets;
\item The Cantor set; and
\item The spaces $X \times Z$, where $X$ is a circle, pseudo-arc, or circle of pseudo-arcs, and $Z$ is either a finite set or the Cantor set.
\end{enumerate}
\end{thm}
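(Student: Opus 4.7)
The plan is essentially laid out in the paragraph preceding the statement, so I would assemble three ingredients. First, I would invoke the structure theorem cited as \cite{MR89} (with refinements in \cite{AO90,MR90}): any homogeneous compactum $Y$ is homeomorphic to a product $X \times Z$, where $X$ is a homogeneous continuum and $Z$ is a $0$-dimensional homogeneous compactum. Applied to our homogeneous $Y \subset \mathbb{R}^2$, this immediately reduces the problem to identifying the two factors.

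Second, I would recall the classical dichotomy for $0$-dimensional homogeneous compact metric spaces (essentially Brouwer's characterization of the Cantor set together with the fact that every finite homogeneous space is just a finite set): such a $Z$ is either a finite set or the Cantor set. Hence the second factor always falls into an admissible form, regardless of planarity considerations.

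Third, I would split on whether the continuum factor $X$ is degenerate. If $X$ is a single point, then $Y \cong Z$ and we land in cases (1) or (2) of the theorem. Otherwise $X$ is non-degenerate, and since for any fixed $z_0 \in Z$ the slice $X \times \{z_0\} \subset Y \subset \mathbb{R}^2$ is a homeomorphic copy of $X$, the factor $X$ is itself a non-degenerate homogeneous plane continuum. Theorem \ref{thm:homog plane cont} then identifies $X$ with the circle, the pseudo-arc, or the circle of pseudo-arcs, placing $Y$ in case (3).

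There is no real obstacle: the hard work is entirely absorbed by Theorem \ref{thm:homog plane cont} and by the cited decomposition theorem from \cite{MR89}. The only subtlety worth flagging is to verify that planarity of $Y$ really does transfer to the factor $X$, which it does through any slice; conversely, $Y = X \times Z$ is automatically embeddable in $\mathbb{R}^2$ in each admissible case, since the circle, the pseudo-arc, and the circle of pseudo-arcs all embed in $\mathbb{R}^2$ together with a Cantor set of disjoint copies.
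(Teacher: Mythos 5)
Your proposal is correct and follows essentially the same route as the paper, which derives this result in the paragraph preceding the statement: apply the product decomposition of \cite{MR89} to write $Y \cong X \times Z$ with $Z$ zero-dimensional (hence finite or Cantor), observe that a non-degenerate factor $X$ embeds in the plane via a slice, and invoke Theorem \ref{thm:homog plane cont}. Your added remarks on transferring planarity to the factor and on the converse embeddability are sensible but do not change the argument.
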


\bigskip
This paper is organized as follows.  After fixing some definitions and notation in Section \ref{sec:notation}, we draw a connection in Section \ref{sec:span sep} between the property of span zero and sets in the product of a graph $G$ and the interval $[0,1]$ which separate $G \times \{0\}$ from $G \times \{1\}$.  For the remainder of the paper after this, we focus our attention on these separators, rather than work with span directly.  In Section \ref{sec:simple folds}, we characterize hereditarily indecomposable compacta in terms of simple piecewise linear functions between graphs.

In Section \ref{sec:stairwells}, we introduce a special type of separating set in the product of a graph with the interval, and prove that such separators are in a certain sense dense in the set of all separators.  Section \ref{sec:unfolding} is devoted to some technical results towards showing that such special separators can be ``unfolded'' by simple piecewise linear maps.  Finally, in Section \ref{sec:applications} we bring everything together and prove our main result, Theorem \ref{thm:hered indec span zero} above.  Section \ref{sec:discussion} includes some discussion and open questions.

\subsection{The pseudo-arc}
\label{sec:pseudo-arc}

In this subsection we give a brief introduction to the pseudo-arc, and describe some of its most important properties.

The pseudo-arc is the most well-known example of a hereditarily indecomposable continuum.  It is a very exotic and complex space with many remarkable and strange properties, yet it is also in some senses ubiquitous and quite natural.

Most descriptions of the pseudo-arc involve some notion of ``crookedness''.  We will appeal to the notion of a crooked map, as follows.

An onto map $g: [0,1] \to [0,1]$ is considered crooked if, roughly speaking, as $x$ travels from $0$ to $1$, $g(x)$ goes back and forth many times, on large and on small scales in $[0,1]$.  More precisely, given $\delta > 0$, we say $g$ is \emph{$\delta$-crooked} if there is a finite set $F \subset [0,1]$ which is a $\delta$-net for $[0,1]$ (i.e.\ each point of $[0,1]$ is within distance $\delta$ from some point of $F$), such that whenever $y_1,y_2,y_3,y_4$ is an increasing or decreasing sequence of points in $F$, and $x_1,x_4 \in [0,1]$ with $x_1 < x_4$ and $g(x_1) = y_1$, $g(x_4) = y_4$, there are points $x_2,x_3 \in [0,1]$ such that $x_1 < x_2 < x_3 < x_4$ and $g(x_2) = y_3$, $g(x_3) = y_2$.

To construct the pseudo-arc, one should choose a sequence of onto maps $g_n: [0,1] \to [0,1]$, $n = 1,2,\ldots$, such that for each $n$ and each $1 \leq k \leq n$, the composition $g_k \circ g_{k+1} \circ \cdots \circ g_n$ is $\frac{1}{n}$-crooked.  The \emph{pseudo-arc} is then the inverse limit of this sequence, $\varprojlim ([0,1], g_n)$.

The pseudo-arc, as constructed by this procedure, is a hereditarily indecomposable arc-like continuum.  According to Bing's characterization theorem \cite{bing51}, any two continua which are both hereditarily indecomposable and arc-like are homeomorphic.  Thus the pseudo-arc is the unique continuum with these properties.  This also means that the particular choices of maps $g_n$ in the above construction are not important -- so long as the crookedness properties are satisfied, the resulting inverse limit will be the same space.

One can equivalently construct the pseudo-arc in the plane as the intersection of a nested sequence of ``snakes'' (homeomorphs of the closed unit disk) which are nested inside one another in a manner reminiscent of the crooked pattern for maps described above.

Because of the enormous extent of crookedness inherent in the pseudo-arc, it is impossible to draw an informative, accurate raster image of this space (see \cite{LM10} for a detailed explanation).  Nevertheless, the pseudo-arc is in some sense ubiquitous: in any manifold $M$ of dimension at least $2$, the set of subcontinua homeomorphic to the pseudo-arc is a dense $G_\delta$ subset of the set of all subcontinua of $M$ (equipped with the Vietoris topology).  The pseudo-arc is a universal object in the sense that it is arc-like, and every arc-like continua is a continuous image of it.

The pseudo-arc has an interesting history of discovery.  It was first constructed by Knaster \cite{knaster22} in 1922 as the first example of a hereditarily indecomposable continuum.  Moise \cite{moise48} in 1948 constructed a similar example, which has the remarkable property that it is homeomorphic to each of its non-degenerate subcontinua.  Moise named this space the ``pseudo-arc'', since the interval $[0,1] \subset \mathbb{R}$ is the only other known space which shares this same property.  Also in 1948, Bing \cite{bing48} constructed another similar example which he proved was homogeneous, thus answering the original question of Knaster and Kuratowski about homogeneous continua in the plane.  Shortly after this, in 1951 Bing published the characterization theorem stated above, from which it follows that all three of these examples are in fact the same space.

Not only is the pseudo-arc homogeneous, but in fact it satisfies the following stronger properties: 1) given a collection of $n$ points $x_1,\ldots,x_n$, no two of which belong to any proper subcontinuum of the pseudo-arc, and given another such collection $y_1,\ldots,y_n$, there is a homeomorphism $h$ of the pseudo-arc to itself such that $h(x_i) = y_i$ for each $i = 1,\ldots,n$ \cite{lehner61}; and 2) given two points $x$ and $y$ and an open subset $U$, if there is a subcontinuum of the pseudo-arc containing $x$ and $y$ which is disjoint from $\overline{U}$, then there is a homeomorphism $h$ of the pseudo-arc to itself such that $h(x) = y$ and $h$ is the identity on $U$ \cite{lewis69}.  These properties should be compared with similar properties enjoyed by the circle $\mathbb{S}^1$: 1$'$) given two sets of $n$ points $x_1,\ldots,x_n,y_1,\ldots,y_n \in \mathbb{S}^1$, both arranged in circular order, there is a homeomorphism $h$ of $\mathbb{S}^1$ to itself such that $h(x_i) = y_i$ for each $i = 1,\ldots,n$; and 2$'$) given two points $x$ and $y$ and an open subset $U$, if there is a subarc of $\mathbb{S}^1$ containing $x$ and $y$ which is disjoint from $\overline{U}$, then there is a homeomorphism $h$ of $\mathbb{S}^1$ to itself such that $h(x) = y$ and $h$ is the identity on $U$.


In 1954, Bing and Jones \cite{BJ59} constructed a space called the \emph{circle of pseudo-arcs}.  This is a circle-like continuum which admits an open map to the circle whose point pre-images are pseudo-arcs (a continuum $X$ is \emph{circle-like} if for any $\varepsilon > 0$ there exists an $\varepsilon$-map from $X$ to the circle $\mathbb{S}^1$).  Bing and Jones proved that the circle of pseudo-arcs is homogeneous, and that it is unique, in the sense that it is the only continuum (up to homoemorphism) with the above properties.  The circle of pseudo-arcs should not be confused with the product of the pseudo arc with $\mathbb{S}^1$ (which is homogeneous but not embeddable in the plane), or with another related space called the \emph{pseudo-circle} (which is a hereditarily indecomposable circle-like continuum in the plane, but is not homogeneous -- see \cite{fearnley69} and \cite{rogers70}).

\section{Definitions and notation}
\label{sec:notation}

An \emph{arc} is a space homeomorphic to the interval $[0,1]$.  A \emph{graph} is a space which is the union of finitely many arcs which intersect at most in endpoints.  Given a graph $G$ and a point $x \in G$, $x$ is an \emph{endpoint} if $x$ is not a cutpoint of any connected neighborhood of $x$ in $G$, and $x$ is a \emph{branch point} if $x$ is a cutpoint of order $\geq 3$ in some connected neighborhood of $x$ in $G$.

The \emph{Hilbert cube} is the space $[0,1]^{\mathbb{N}}$, with the standard product metric $d$.  It has the property that any compact metric space embeds in it.  For this reason, we will assume throughout this paper that any compacta we consider are embedded in $[0,1]^{\mathbb{N}}$, and use this same metric $d$ for all of them.

Given two functions $f,g: X \to Y$ between compacta $X$ and $Y$, we use the \emph{supremum metric} to measure the distance between $f$ and $g$, defined by
\[ \dsup(f,g) = \sup \{d(f(x),g(x)): x \in X\} .\]

Given two non-empty subsets $A$ and $B$ of a compactum $X$, the \emph{Hausdorff distance} between $A$ and $B$ is
\[ d_H(A,B) = \inf \{\varepsilon > 0: A \subset B_\varepsilon \textrm{ and } B \subset A_\varepsilon\} ,\]
where $A_\varepsilon$ (respectively, $B_\varepsilon$) is the $\varepsilon$-neighborhood of $A$ (respectively, $B$).  It is well known that the hyperspace of all non-empty compact subsets of $X$, equipped with the Hausdorff metric, is compact.

\section{Span and separators}
\label{sec:span sep}

In this section, we draw a correspondance between the property of span zero and the existence of certain separating sets in the product of a graph and an arc which approximate a continuum.

As in the Introduction, a continuum $X$ has \emph{span zero} if whenever $f,g: C \to X$ are maps of a continuum $C$ to $X$ with $f(C) \subseteq g(C)$, there is a point $p \in C$ such that $f(p) = g(p)$.  This can equivalently be formulated as follows: $X$ has span zero if every subcontinuum $Z \subseteq X \times X$ with $\pi_1(Z) \subseteq \pi_2(Z)$ meets the diagonal $\Delta X = \{(x,x): x \in X\}$ (here $\pi_1$ and $\pi_2$ are the first and second coordinate projections $X \times X \to X$, respectively).  By \cite{davis84}, this is equivalent to the traditional definition of span zero where one insists that $\pi_1(Z) = \pi_2(Z)$.

The proof of the following theorem is implicit in results of \cite{OT84}.  We include a self-contained proof here for completeness.

We remark that in fact the property of ``span zero'' in this theorem could be replaced with the weaker property of ``surjective semispan zero'', which has the same definition as span zero except that one insists that $\pi_1(Z) \subseteq \pi_2(Z) = X$ \cite{lelek77}.

\begin{thm}
\label{thm:span separator}
Let $X \subset [0,1]^{\mathbb{N}}$ be a continuum in the Hilbert cube with span zero.  For any $\varepsilon > 0$, there exists $\delta > 0$ such that if $G \subset [0,1]^{\mathbb{N}}$ is a graph and $I \subset [0,1]^{\mathbb{N}}$ is an arc with endpoints $p$ and $q$, such that the Hausdorff distance from $X$ to each of $G$ and $I$ is less than $\delta$, then the set $M = \{(x,y) \in G \times (I \smallsetminus \{p,q\}): d(x,y) < \varepsilon\}$ separates $G \times \{p\}$ from $G \times \{q\}$ in $G \times I$.
\end{thm}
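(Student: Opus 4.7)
I would argue by contradiction using a Hausdorff-limit argument combined with the span zero hypothesis. Suppose, for some $\varepsilon > 0$, that no such $\delta > 0$ exists. Then for each $n \in \mathbb{N}$ there exist a graph $G_n$ and an arc $I_n \subset [0,1]^{\mathbb{N}}$ with endpoints $p_n, q_n$, each within Hausdorff distance $1/n$ of $X$, such that $M_n$ fails to separate $G_n \times \{p_n\}$ from $G_n \times \{q_n\}$ in $G_n \times I_n$. Openness of $M_n$ yields a continuum $C_n \subset (G_n \times I_n) \setminus M_n$ meeting both slices; thus $\pi_2(C_n) = I_n$, and every $(x, y) \in C_n$ satisfies either $y \in \{p_n, q_n\}$ or $d(x, y) \geq \varepsilon$.

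By compactness of the hyperspace of $[0,1]^{\mathbb{N}} \times [0,1]^{\mathbb{N}}$ under the Hausdorff metric, I pass to a subsequence along which $C_n \to C$, $p_n \to p^* \in X$, and $q_n \to q^* \in X$. Then $C$ is a sub-continuum of $X \times X$ with $\pi_2(C) = X$ and $\pi_1(C) \subseteq X$, and by continuity of $d$, every $(x, y) \in C$ with $y \notin \{p^*, q^*\}$ satisfies $d(x, y) \geq \varepsilon$. Since $X$ has span zero and $\pi_1(C) \subseteq \pi_2(C) = X$, applying the definition to the projections $\pi_1|_C, \pi_2|_C : C \to X$ yields a diagonal point $(x_0, x_0) \in C$. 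If $x_0 \notin \{p^*, q^*\}$, the constraint forces $0 \geq \varepsilon$, the desired contradiction.

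The main obstacle is to rule out the boundary cases $x_0 \in \{p^*, q^*\}$. The key observation is that any top-slice point $(\xi, p_n) \in C_n$ with $d(\xi, p_n) < \varepsilon$ must admit a neighborhood $U$ in $G_n \times I_n$ such that $C_n \cap U \subseteq G_n \times \{p_n\}$: a sequence of points $(x_k, y_k) \in C_n$ with $y_k \notin \{p_n, q_n\}$ converging to $(\xi, p_n)$ would yield both $d(x_k, y_k) \geq \varepsilon$ and, by continuity of $d$, $d(x_k, y_k) \to d(\xi, p_n) < \varepsilon$, a contradiction. Consequently, I would refine $C_n$ by removing the relatively open subset of top- and bottom-slice points having such isolating neighborhoods and selecting a component $\tilde C_n$ of the closed remainder meeting both the non-isolated top and non-isolated bottom slices of $C_n$ (which exist by the connectivity of $C_n$). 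Every top- or bottom-slice contact point of $\tilde C_n$ then lies at $d$-distance at least $\varepsilon$ from $p_n$ or $q_n$, so the Hausdorff limit $\tilde C$ excludes $(p^*, p^*)$ and $(q^*, q^*)$. Reapplying span zero to $\tilde C$ then yields a diagonal point distinct from these, completing the contradiction. I expect the technically most delicate step is verifying that a suitable component $\tilde C_n$ preserving $\pi_2(\tilde C_n) = I_n$ can be chosen: the isolated close-to-boundary points may serve as topological bridges whose excision disconnects the essential spanning structure of $C_n$, requiring a careful component-selection analysis tailored to the two-dimensional geometry of $G_n \times I_n$.
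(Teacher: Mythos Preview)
Your contradiction/Hausdorff-limit argument is exactly the paper's approach. The paper, however, does not isolate the boundary issue you raise: it simply asserts that the connecting continuum $Z_n$ can be chosen with $d(x,y) \geq \varepsilon$ for \emph{all} $(x,y) \in Z_n$, after which the limit $Z$ misses the diagonal entirely and span zero yields the contradiction immediately.

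Your care here is warranted, and your fix is correct; the component-selection step you flag as delicate is in fact routine once phrased cleanly. Set $N_n = \{(x,y) \in G_n \times I_n : d(x,y) \geq \varepsilon\}$. You already observed that $C_n \setminus N_n$ is open in $C_n$ and contained in the disjoint union of the two closed slices $G_n \times \{p_n\}$ and $G_n \times \{q_n\}$. If no component of the compactum $C_n \cap N_n$ met both slices, then (since components coincide with quasi-components in compacta) one could write $C_n \cap N_n = P \sqcup Q$ with $P,Q$ closed and disjoint, $P$ missing the $q_n$-slice and $Q$ missing the $p_n$-slice. Then $P \cup \bigl((C_n \setminus N_n) \cap (G_n \times \{p_n\})\bigr)$ and $Q \cup \bigl((C_n \setminus N_n) \cap (G_n \times \{q_n\})\bigr)$ would separate $C_n$: each piece is closed because the closure of $C_n \setminus N_n$ remains inside the respective slice, so its frontier on the $p_n$-side lies in $N_n \cap (G_n \times \{p_n\}) \subset P$, and symmetrically on the $q_n$-side. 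This contradicts connectedness of $C_n$. Hence some component $\tilde C_n \subset C_n \cap N_n$ meets both slices, giving $\pi_2(\tilde C_n)=I_n$ and $d(x,y)\ge\varepsilon$ everywhere on $\tilde C_n$; running the limit on $\tilde C_n$ eliminates the boundary cases entirely. This is simpler than excising ``isolated'' slice points and needs no special two-dimensional geometry.
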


\begin{proof}
If the Theorem were false, then there would exist $\varepsilon > 0$ and a sequence of graphs $\langle G_n \rangle_{n=1}^\infty$ and arcs $\langle I_n \rangle_{n=1}^\infty$ with endpoints $p_n$ and $q_n$ in $[0,1]^{\mathbb{N}}$, both converging to $X$ in the Hausdorff metric, and such that the set $M_n = \{(x,y) \in G_n \times (I_n \smallsetminus \{p_n,q_n\}): d(x,y) < \varepsilon\}$ does not separate $G_n \times \{p_n\}$ from $G_n \times \{q_n\}$ for each $n = 1,2,\ldots$.  This would mean (see e.g.\ \cite[Theorem 5.2]{nadler92}) that for every $n = 1,2,\ldots$, there is a continuum $Z_n \subset G_n \times I_n$ meeting $G_n \times \{p_n\}$ and $G_n \times \{q_n\}$ (hence the second coordinate projection of $Z_n$ is all of $I_n$), such that $d(x,y) \geq \varepsilon$ for all $(x,y) \in Z_n$.

Since $G_n \times I_n$ converges to $X \times X$, the sequence of continua $Z_n$ accumulates on a continuum $Z \subset X \times X$.  Clearly $d(x,y) \geq \varepsilon$ for all $(x,y) \in Z$, and the second coordinate projection of $Z$ is $X$ since the second coordinate projection of $Z_n$ is $I_n$ for each $n = 1,2,\ldots$.  This means that $Z \cap \Delta X = \emptyset$ and $\pi_1(Z) \subseteq \pi_2(Z) = X$, hence $X$ does not have span zero, a contradiction.
\end{proof}

\section{Simple folds}
\label{sec:simple folds}

Throughout the remainder of this paper, $G$ will denote a (not necessarily connected) graph.  A subset $A$ of $G$ will be called \emph{regular} if $A$ is closed and has finitely many components, each of which is non-degenerate.  Note that a regular set always has finite boundary.

The following definition is adapted from \cite{OT86}.

\begin{defn}
\label{defn:simple fold}
A \emph{simple fold} on $G$ is a graph $F = F_1 \cup F_2 \cup F_3$ and a function $\varphi: F \to G$, called the \emph{projection}, which satisfy the following properties, where $G_i = \varphi(F_i)$ for $i = 1,2,3$:
\begin{enumerate}[label=(\textbf{F\arabic{*}})]
\item \label{enum:regular} $G_1$, $G_2$, and $G_3$ are non-empty regular subsets of $G$;
\item \label{enum:G1 G3} $G_1 \cup G_3 = G$, and $G_2 = G_1 \cap G_3$;
\item \label{enum:separate} $\overline{G_1 \smallsetminus G_2} \cap \overline{G_3 \smallsetminus G_2} = \emptyset$;
\item \label{enum:pw homeo} $\varphi {\upharpoonright}_{F_i}$ is a homeomorphism of $F_i$ onto $G_i$ for each $i = 1,2,3$; and
\item \label{enum:intersect bdy} $\partial G_1 = \varphi(F_1 \cap F_2)$, $\partial G_3 = \varphi(F_2 \cap F_3)$, and $F_1 \cap F_3 = \emptyset$.
\end{enumerate}
\end{defn}

Observe that property \ref{enum:separate} implies that $G_1 \cap G_3 \subseteq G_2$, so in \ref{enum:G1 G3} we could replace the condition $G_2 = G_1 \cap G_3$ with $G_2 \subseteq G_1 \cap G_3$.

See Figure \ref{fig:simple fold} for two examples of simple folds, in which both graphs $F$ and $G$ are arcs.

\begin{figure}
\begin{center}
\includegraphics{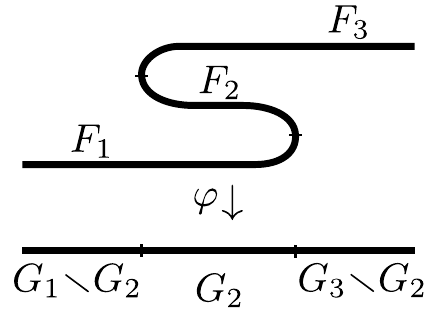}
\hspace{0.35in}
\includegraphics{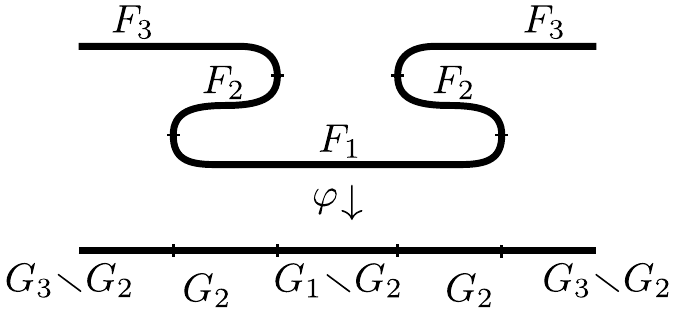}
\end{center}

\caption{Two examples of simple folds $\varphi: F \to G$, where $F$ and $G$ are arcs.  In both cases, the map $\varphi$ is the vertical projection.  Note that in the second example, the sets $F_2$, $F_3$, $G_2$, and $G_3$ are all disconnected (each has two components).}
\label{fig:simple fold}
\end{figure}

We record here some basic properties of simple folds.  The proofs of these properties are left to the reader.

\begin{lem}
\label{lem:fold basic}
Let $F = F_1 \cup F_2 \cup F_3$ be a simple fold on $G$ with projection $\varphi: F \to G$, and let $G_i = \varphi(F_i)$ for $i = 1,2,3$.  Then
\begin{enumerate}[label=(\arabic{*})]
\item \label{enum:fold boundary 1} $\partial G_2 = \partial G_1 \cup \partial G_3$ and $\partial G_1 \cap \partial G_3 = \emptyset$;
\item \label{enum:fold boundary 2} $\partial (G_1 \smallsetminus G_2) = \partial G_3$ and $\partial (G_3 \smallsetminus G_2) = \partial G_1$;
\item $F_1$, $F_2$, and $F_3$ are regular subsets of $F$;
\item $F_1 \cap F_2$ and $F_2 \cap F_3$ are finite sets;
\item \label{enum:creases} $\partial F_1 = F_1 \cap F_2$, $\partial F_3 = F_2 \cap F_3$, and $\partial F_2 = \partial F_1 \cup \partial F_3$;
\item \label{enum:inter sep} $\partial F_1$ separates $F_1 \smallsetminus \partial F_1$ from $(F_2 \cup F_3) \smallsetminus \partial F_1$ in $F$, and $\partial F_3$ separates $F_3 \smallsetminus \partial F_3$ from $(F_1 \cup F_2) \smallsetminus \partial F_3$ in $F$; and
\item \label{enum:open map} $\varphi {\upharpoonright}_{F_i \smallsetminus \partial F_i}: F_i \smallsetminus \partial F_i \to G$ is an open map for each $i = 1,2,3$.
\end{enumerate}
\end{lem}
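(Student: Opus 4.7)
The plan is to unpack each of (F1)--(F5) and verify the seven clauses in turn, treating items (1) and (2) as purely graph-theoretic facts about $G$, and items (3)--(7) as structural consequences for the fold $F$.  For (1) and (2), (F2) immediately yields $G_1 \smallsetminus G_2 = G \smallsetminus G_3$ and $G_3 \smallsetminus G_2 = G \smallsetminus G_1$; combined with the general identity $\partial A = \partial(X \smallsetminus A)$ and the closedness of the $G_i$, this gives (2) at once.  For (1), (F3) implies that $G_1 \smallsetminus G_2$ and $G_3 \smallsetminus G_2$ form a separation of $G \smallsetminus G_2$, so
\[
\partial G_2 \;=\; \overline{G \smallsetminus G_2} \cap G_2 \;=\; (\overline{G_1 \smallsetminus G_2} \cap G_2) \cup (\overline{G_3 \smallsetminus G_2} \cap G_2),
\]
which by (2) and the inclusion $\partial G_1 \cup \partial G_3 \subseteq G_2$ (from (F5)) simplifies to $\partial G_3 \cup \partial G_1$; the disjointness $\partial G_1 \cap \partial G_3 = \emptyset$ then follows from (F3) via the inclusions $\partial G_1 \subseteq \overline{G_3 \smallsetminus G_2}$ and $\partial G_3 \subseteq \overline{G_1 \smallsetminus G_2}$.

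Items (3) and (4) come essentially for free.  Each $F_i$ is homeomorphic via $\varphi|_{F_i}$ to the compact regular graph $G_i$, hence is compact (so closed in $F$) with finitely many non-degenerate components, giving (3).  Injectivity of $\varphi|_{F_1}$ together with (F5) gives $|F_1 \cap F_2| = |\partial G_1|$, which is finite as the boundary of a regular set, giving (4) (and similarly for $F_2 \cap F_3$).

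The technical core of the lemma lies in (5).  To show $\partial F_1 = F_1 \cap F_2$, fix $x \in F_1 \cap F_2$ and, using finiteness of $\partial G_1$, choose a neighborhood $V$ of $\varphi(x)$ in $G_2$ with $V \cap \partial G_1 = \{\varphi(x)\}$; then $(\varphi|_{F_2})^{-1}(V \smallsetminus \{\varphi(x)\})$ is a punctured neighborhood of $x$ in $F_2$ that misses $F_1$ (since (F5) forces every point of $F_1 \cap F_2$ to map into $\partial G_1$), witnessing $x \in \partial F_1$.  For the reverse inclusion, suppose $x \in F_1 \smallsetminus F_2$ and $y_n \to x$ in $F$ with $y_n \in F \smallsetminus F_1 = (F_2 \smallsetminus F_1) \cup F_3$.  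After passing to a subsequence, $y_n$ lies in a fixed $F_i$ for some $i \in \{2,3\}$; by continuity of $\varphi$ and of $(\varphi|_{F_i})^{-1}$, $y_n$ also converges to $(\varphi|_{F_i})^{-1}(\varphi(x)) \in F_i$, and uniqueness of limits in $F$ forces $x \in F_i$, contradicting either $x \notin F_2$ (if $i = 2$) or $F_1 \cap F_3 = \emptyset$ (if $i = 3$).  The analogous arguments yield $\partial F_3 = F_2 \cap F_3$, and then $\partial F_2 = \partial F_1 \cup \partial F_3$ by repeating the limit-uniqueness reasoning on the decomposition $F \smallsetminus F_2 = (F_1 \smallsetminus F_2) \cup (F_3 \smallsetminus F_2)$.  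Item (6) is then immediate: $F \smallsetminus \partial F_1 = (F_1 \smallsetminus F_2) \sqcup (F \smallsetminus F_1)$, and the two halves have disjoint closures in $F$ (the first because $\overline{F_1 \smallsetminus F_2} \subseteq F_1$, the second by the reverse inclusion just proved).  Finally, (7) holds because $F_i \smallsetminus \partial F_i$ is open in $F$ (being $\mathrm{int}_F(F_i)$) and $\varphi|_{F_i}$ is a homeomorphism sending $\partial F_i$ onto $\partial G_i$, so $\varphi$ restricts to a homeomorphism of $F_i \smallsetminus \partial F_i$ onto $G_i \smallsetminus \partial G_i$, which is open in $G$.

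The main obstacle I anticipate is the reverse inclusion in (5): one needs the finiteness of $F_1 \cap F_2$ from (4) to run the punctured-neighborhood argument, and the limit-uniqueness trick (powered by $F_1 \cap F_3 = \emptyset$) to preclude $F_1 \smallsetminus F_2$ from being accumulated on by points outside $F_1$.  Every other clause is essentially bookkeeping given (5).
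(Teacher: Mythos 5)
Your proof is correct in substance; note that the paper explicitly leaves the proof of this lemma to the reader, so there is no official argument to compare against, and your route (deriving (1)--(2) set-theoretically from \ref{enum:G1 G3}--\ref{enum:separate}, then establishing (5) via punctured neighborhoods pulled back through $\varphi{\upharpoonright}_{F_2}$ plus the limit-uniqueness argument powered by $F_1\cap F_3=\emptyset$) is exactly the natural one. Two small imprecisions are worth tidying. First, for $\partial F_2=\partial F_1\cup\partial F_3$ the limit-uniqueness reasoning on $F\smallsetminus F_2=(F_1\smallsetminus F_2)\cup(F_3\smallsetminus F_2)$ only gives the inclusion $\partial F_2\subseteq(F_1\cap F_2)\cup(F_2\cap F_3)$; the reverse inclusion needs the punctured-neighborhood argument run inside $G_1$ (resp.\ $G_3$) rather than $G_2$: for $x\in F_1\cap F_2$, a neighborhood $V$ of $\varphi(x)$ in $G_1$ with $V\cap\partial G_1=\{\varphi(x)\}$ pulls back under $(\varphi{\upharpoonright}_{F_1})^{-1}$ to points of $F_1\smallsetminus F_2$ accumulating at $x$. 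Second, in (6) the closures of $F_1\smallsetminus F_2$ and $F\smallsetminus F_1$ taken in $F$ are \emph{not} disjoint -- by the argument just described both contain $\partial F_1$ whenever it is non-empty -- but what your two parenthetical reasons actually establish is that each set misses the closure of the other within $F\smallsetminus\partial F_1$, i.e.\ mutual separation, which is precisely what (6) requires. Neither point is a genuine gap; both are covered by tools you have already deployed.
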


To define a simple fold, it is enough to identify three subsets $G_1,G_2,G_3$ of $G$ satisfying properties \ref{enum:regular}, \ref{enum:G1 G3}, and \ref{enum:separate}.  Indeed, take spaces $E_1,E_2,E_3$ with $E_i \approx G_i$ and take homeomorphisms $\varphi_i: E_i \to G_i$.  Define $F = (E_1 \sqcup E_2 \sqcup E_3)/{\sim}$, where $\sim$ identifies pairs of the form $p \in E_i$, $q \in E_2$ with $\varphi_i(p) = \varphi_2(q) \in \partial G_i$ for $i = 1,3$.  Define $F_i$ to be the projection of $E_i$ in the quotient space $F$ and define $\varphi: F \to G$ by $\varphi {\upharpoonright}_{F_i} = \varphi_i$, for each $i = 1,2,3$.  It is straightforward to see that this is a well defined simple fold, and if $F'$ is another simple fold on $G$ with projection $\varphi'$ such that $\varphi'(F_i') = G_i$ for $i = 1,2,3$, then there is a homeomorphism $\theta: F' \to F$ with $\theta(F_i') = F_i$ for $i = 1,2,3$ and $\varphi' = \varphi \circ \theta$.


In general, even if $G$ is connected, a simple fold $F$ on $G$ need not be connected.  However, the next proposition shows that for connected $G$ we can always reduce $F$ to a connected simple fold.

Note that if $G$ is connected and $\partial G_1 = \emptyset$, then $G_1 = G$, $F$ is disconnected, and $\varphi {\upharpoonright}_{F_1}$ is a homeomorphism of $F_1$ onto $G$.  Likewise, if $\partial G_3 = \emptyset$, then $\varphi {\upharpoonright}_{F_3}$ is a homeomorphism of $F_3$ onto $G$.  In light of this, we will assume $\partial G_1 \neq \emptyset \neq \partial G_3$ in the following proposition.

\begin{prop}
\label{prop:conn fold}
Let $F = F_1 \cup F_2 \cup F_3$ be a simple fold on $G$ with projection $\varphi: F \to G$, and let $G_i = \varphi(F_i)$ for $i = 1,2,3$.  Suppose $G$ is connected, and that $\partial G_1 \neq \emptyset \neq \partial G_3$.  Then there is a component $C$ of $F$ such that $\varphi(C)$ meets $\partial G_1$ and $\partial G_3$.  Moreover, for any such component, $\varphi(C) = G$, and if we let $F_i' = F_i \cap C$ for $i = 1,2,3$, then $F' = F_1' \cup F_2' \cup F_3'$ is also a simple fold on $G$, with projection map $\varphi {\upharpoonright}_{F'}: F' \to G$.
\end{prop}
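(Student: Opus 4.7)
The plan has three stages. I would first show, via a separation argument, that some component of $G_2$ meets both $\partial G_1$ and $\partial G_3$. I would then use this to produce the desired component $C$ of $F$. Finally, for the "moreover" assertion, I would show that any $C$ with $\varphi(C)$ meeting both boundaries satisfies $\varphi(C) = G$, after which the simple-fold axioms for the restriction follow in a routine manner.

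For existence of a good $G_2$-component, suppose toward contradiction that every component of $G_2$ meets at most one of $\partial G_1$, $\partial G_3$. Partition the (finitely many) components of $G_2$ into those meeting $\partial G_1$ only, those meeting $\partial G_3$ only, and those meeting neither, and set
\[
S = \overline{G_1 \setminus G_2} \cup \bigcup \{D : D \text{ meets } \partial G_3 \text{ only}\},
\]
\[
T = \overline{G_3 \setminus G_2} \cup \bigcup \{D : D \text{ meets } \partial G_1 \text{ or neither}\}.
\]
Using Lemma~\ref{lem:fold basic}(1)--(2) and property (F3), I check that $S$ and $T$ are closed, disjoint, and cover $G$; both are nonempty because $\partial G_3 \subseteq \overline{G_1 \setminus G_2}$ and $\partial G_1 \subseteq \overline{G_3 \setminus G_2}$. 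This contradicts the connectedness of $G$. Given a component $D^*$ of $G_2$ meeting both boundaries, the preimage $K_2 = (\varphi|_{F_2})^{-1}(D^*)$ is a component of $F_2$ containing points of both $F_1 \cap F_2$ and $F_2 \cap F_3$, so the component $C$ of $F$ containing $K_2$ has $\varphi(C) \supseteq D^*$ meeting both $\partial G_1$ and $\partial G_3$.

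For the "moreover" assertion, let $C$ be any component with $\varphi(C)$ meeting both boundaries. I would first upgrade the hypothesis to: $C$ meets both $F_1 \cap F_2$ and $F_2 \cap F_3$. Indeed, if $p \in \varphi(C) \cap \partial G_1$ and the only preimage of $p$ in $C$ were the $F_3$-preimage $y_3$, then the $F_3$-component $K_3 \ni y_3$ lies in $C$ and maps onto a component $L_3$ of $G_3$; since $\partial G_3 \neq \emptyset$ forces $L_3 \neq G$, the nonempty set $\partial L_3 \subseteq \partial G_3$ yields, via $\varphi|_{F_3}^{-1}$, a point of $K_3 \cap F_2 \cap F_3 \subseteq C$, so $C$ meets $F_2 \cap F_3$. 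The symmetric argument from $\varphi(C) \cap \partial G_3$ gives $C \cap F_1 \cap F_2 \neq \emptyset$. In particular $C \cap F_i \neq \emptyset$ for each $i$.

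Next I would show $\varphi(C)$ is clopen in $G$ and hence equal to $G$. Closedness is automatic. For openness at $x \in \varphi(C)$, Lemma~\ref{lem:fold basic}(7) handles $x \notin \partial G_1 \cup \partial G_3$ directly (since any preimage of $x$ in $C$ then lies in $F_i \setminus \partial F_i$ for some $i$). For $x \in \partial G_1$ (symmetrically $\partial G_3$), $\varphi^{-1}(x)$ consists of a glue point $y_{12} \in F_1 \cap F_2$ and a separate point $y_3 \in F_3$; if both are in $C$ the $F_3$-side already supplies a $G$-neighborhood of $x$ inside $\varphi(C)$, and if only $y_3 \in C$ then again the $F_3$-neighborhood of $y_3$ projects onto a $G$-neighborhood of $x$. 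The only remaining configuration is $y_{12} \in C$ and $y_3 \notin C$, which I would rule out by applying the Stage~1 separation argument to the restricted regular sets $G_i' = \varphi(C \cap F_i)$ inside $\varphi(C)$: the hypothesis that $C$ meets both $F_1 \cap F_2$ and $F_2 \cap F_3$ ensures $G_1' \cup G_3'$ inherits enough of (F1)--(F3) to force the analogous partition of $\varphi(C)$ to be trivial, forbidding such boundary points. Once $\varphi(C) = G$ is proved, the verification that $F' = \bigcup_i (C \cap F_i)$ with projection $\varphi|_{F'}$ is a simple fold on $G$ is a direct check of (F1)--(F5) for the $G_i'$, inherited from the ambient fold. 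The main obstacle I anticipate is this last openness step: ruling out the asymmetric preimage configuration cleanly requires marrying the preimage analysis with the inheritance of the fold structure by the restriction, which is precisely what the "moreover" clause is packaging.
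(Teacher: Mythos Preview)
Your overall architecture matches the paper's: find a component of $G_2$ meeting both $\partial G_1$ and $\partial G_3$ to produce $C$, then show $\varphi(C)$ is clopen in $G$, then verify the fold axioms for the restriction. Your Stage~1 separation argument is correct and is exactly what the paper does (stated more tersely there). The difference, and the genuine gap, is in the openness argument.

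You correctly isolate the only obstruction: at a point $x\in\partial G_1$ (symmetrically $\partial G_3$) with $y_{12}\in C$ but $y_3\notin C$, you cannot read off a $G$-neighborhood of $x$ inside $\varphi(C)$. Your proposed fix, re-running the Stage~1 separation argument on the restricted sets $G_i'=\varphi(C\cap F_i)$, does not work. That argument only yields the existence of \emph{some} component of $G_2'$ meeting both $\partial G_1'$ and $\partial G_3'$; it says nothing about the specific point $x$, so it cannot forbid the configuration $y_{12}\in C$, $y_3\notin C$ at $x$. (Your ``upgrade'' step has a related issue: from $\varphi(C)\cap\partial G_1\neq\emptyset$ your case analysis gives only the disjunction ``$C$ meets $F_1\cap F_2$ \emph{or} $C$ meets $F_2\cap F_3$'', and the symmetric conclusion from $\partial G_3$ is the same disjunction, so you do not obtain both conjuncts.)

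The paper closes this gap with a pointwise connectedness lemma rather than a global separation argument. It proves: if $C'\subseteq C$ is connected with $\varphi(C')\subset G_2$ and $\varphi(C')$ meets both $\partial G_1$ and $\partial G_3$, then the full preimage $\varphi^{-1}(\varphi(C'))$ lies in $C$ (because the three homeomorphic copies of $\varphi(C')$ in $F_1,F_2,F_3$ are glued pairwise along $\partial F_1$ and $\partial F_3$ into one connected set). Given $y_{12}\in C\cap\partial F_1$, one then takes $C'$ to be the closure of the component of $C\smallsetminus\varphi^{-1}(\partial G_3)$ containing $y_{12}$; the Boundary Bumping Theorem forces $C'$ to meet $\varphi^{-1}(\partial G_3)$, so $\varphi(C')\subset G_2$ meets both boundaries, and the lemma delivers $y_3=(\varphi{\upharpoonright}_{F_3})^{-1}(x)\in C$. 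Since $y_3\notin\partial F_2$, openness at $x$ follows. This is the missing idea in your proposal.
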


\begin{proof}
We first prove that there exists a component $C$ of $F$ such that $\varphi(C)$ meets $\partial G_1$ and $\partial G_3$.  By \ref{enum:G1 G3}, \ref{enum:separate} and since $G$ is connected, there is a component $K$ of $G_2$ which meets both $\overline{G_1 \smallsetminus G_2}$ and $\overline{G_3 \smallsetminus G_2}$.  By Lemma \ref{lem:fold basic}\ref{enum:fold boundary 2}, it follows that $K \cap \partial G_1 \neq \emptyset \neq K \cap \partial G_3$.  Because $\varphi {\upharpoonright}_{F_2}$ is a homeomorphism of $F_2$ onto $G_2$ (by \ref{enum:pw homeo}), we have that there is a component $C$ of $F$ such that $\varphi^{-1}(K) \cap F_2 \subset C$.  Then $\varphi(C) \supseteq K$, so $\varphi(C) \cap \partial G_1 \neq \emptyset \neq \varphi(C) \cap \partial G_3$.

Now fix any such component $C$ of $F$.

\begin{claim}
\label{claim:in C}
If $C' \subseteq C$ is any connected subset such that $\varphi(C') \subset G_2$ and $\varphi(C') \cap \partial G_1 \neq \emptyset \neq \varphi(C') \cap \partial G_3$, then $\varphi^{-1}(\varphi(C')) \subset C$.
\end{claim}

\begin{proof}[Proof of Claim \ref{claim:in C}]
\renewcommand{\qedsymbol}{\textsquare (Claim \ref{claim:in C})}
Let $C' \subseteq C$ be a connected subset such that $\varphi(C') \subset G_2$ and $\varphi(C') \cap \partial G_1 \neq \emptyset \neq \varphi(C') \cap \partial G_3$.  Since $G_2 = G_1 \cap G_3$ (by \ref{enum:G1 G3}), we have that $\varphi^{-1}(\varphi(C')) \cap F_1$, $\varphi^{-1}(\varphi(C')) \cap F_2$, and $\varphi^{-1}(\varphi(C')) \cap F_3$ are all homeomorphic to $\varphi(C')$ by \ref{enum:pw homeo}; in particular they are all connected.  Moreover, $\varphi^{-1}(\varphi(C')) \cap F_1 \cap F_2 \neq \emptyset$ since $\varphi(C') \cap \partial G_1 \neq \emptyset$ and $\partial G_1 = \varphi(F_1 \cap F_2)$ by \ref{enum:intersect bdy}.  Likewise, $\varphi^{-1}(\varphi(C')) \cap F_2 \cap F_3 \neq \emptyset$.  It follows that $\varphi^{-1}(\varphi(C'))$, which is the union of the three sets $\varphi^{-1}(\varphi(C')) \cap F_1$, $\varphi^{-1}(\varphi(C')) \cap F_2$, and $\varphi^{-1}(\varphi(C')) \cap F_3$, is connected.  Thus $\varphi^{-1}(\varphi(C')) \subset C$.
\end{proof}

Since $C$ is closed, $\varphi(C)$ is closed in $G$.  To show that $\varphi(C) = G$, we will show that $\varphi(C)$ is also open; this suffices since $G$ is connected.  To this end, let $x \in \varphi(C)$, and let $p \in C$ be such that $\varphi(p) = x$.  If $p \notin \partial F_2$, then by Lemma \ref{lem:fold basic}\ref{enum:open map} $\varphi$ is a homeomorphism in a neighborhood of $p$, so since $C$ is open in $F$, $\varphi(C)$ contains a neighborhood of $x$.

Suppose now that $p \in \partial F_2$.  Then $p \in \partial F_1 \cup \partial F_3$ by Lemma \ref{lem:fold basic}\ref{enum:creases}; say $p \in \partial F_1$.  Let $C'$ be the closure of the component of $C \smallsetminus \varphi^{-1}(\partial G_3)$ containing $p$.  Then by the Boundary Bumping Theorem (see e.g.\ \cite[Theorem 5.4]{nadler92}), $C' \cap \varphi^{-1}(\partial G_3) \neq \emptyset$.  Thus by Claim \ref{claim:in C}, we have $\varphi^{-1}(\varphi(C')) \subset C$.  In particular, the point $q = (\varphi {\upharpoonright}_{F_3})^{-1}(x) \in C$.  But $q \notin \partial F_3$ (because $\varphi(q) = x \in \partial G_1$ and by \ref{enum:intersect bdy} and Lemma \ref{lem:fold basic}\ref{enum:creases}, $\varphi(\partial F_3) = \partial G_3$, which is disjoint from $\partial G_1$), thus $q \notin \partial F_2$, and so again as above, $\varphi(C)$ contains a neighborhood of $\varphi(q) = x$.  The argument for $p \in \partial F_3$ is similar.

Therefore $\varphi(C) = G$.  It is straightforward to check from the definition of a simple fold that if $C \subset F$ is a component with $\varphi(C) = G$, then $F' = F_1' \cup F_2' \cup F_3'$, where $F_i' = F_i \cap C$ for $i = 1,2,3$, is a simple fold on $G$ with projection map $\varphi {\upharpoonright}_{F'}$ (note that it may well happen that $G'_i = \varphi(F'_i)$ is a proper subset of $G_i$ for one or more $i = 1,2,3$).
\end{proof}

The next result is related to Theorem 2 of \cite{OT86}, and it is alluded to in that paper though not treated in detail there.  It should be considered as a translation to the setting of simple folds of the following result of Krasinkiewicz and Minc \cite{KM77}: A continuum $X$ is hereditarily indecomposable if and only if for any disjoint closed subsets $A$ and $B$ of $X$ and any open sets $U$ and $V$ containing $A$ and $B$, respectively, there exist three closed sets $X_1,X_2,X_3 \subset X$ such that $X = X_1 \cup X_2 \cup X_3$, $A \subset X_1$, $B \subset X_3$, $X_1 \cap X_2 \subset V$, $X_2 \cap X_3 \subset U$, and $X_1 \cap X_3 = \emptyset$.  We remark that one can replace ``hereditarily indecomposable continuum'' with ``hereditarily indecomposable compactum'' in this result; the proof is unchanged.

\begin{thm}
\label{thm:factor fold}
Let $X$ be a compactum.  The following are equivalent:
\begin{enumerate}[label=(\arabic{*})]
\item \label{enum:factor fold 1} $X$ is hereditarily indecomposable
\item \label{enum:factor fold 2} For any map $f: X \to G$ to a graph $G$, for any simple fold $\varphi: F \to G$, and for any $\varepsilon > 0$, there exists a map $g: X \to F$ such that $\dsup(f, \varphi \circ g) < \varepsilon$
\item \label{enum:factor fold 3} For any map $f: X \to [0,1]$, for any simple fold $\varphi: F \to [0,1]$ where $F$ is an arc, and for any $\varepsilon > 0$, there exists a map $g: X \to F$ such that $\dsup(f, \varphi \circ g) < \varepsilon$.
\end{enumerate}
\end{thm}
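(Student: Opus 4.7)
The plan is to prove $(1) \Rightarrow (2) \Rightarrow (3) \Rightarrow (1)$; the middle implication is immediate since $[0,1]$ is a graph. Both nontrivial directions translate the Krasinkiewicz--Minc characterization of hereditary indecomposability recalled just before the theorem.

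For $(1) \Rightarrow (2)$, the aim is a closed three-set cover $X = X_1 \cup X_2 \cup X_3$ compatible with the gluing pattern of the simple fold, from which $g$ can be built piecewise via the homeomorphisms $\varphi|_{F_i}$. I would first pass to the hereditarily indecomposable compactum $Y := f^{-1}(G_2)$ and apply Krasinkiewicz--Minc inside $Y$ with $A := f^{-1}(\partial G_3)$ and $B := f^{-1}(\partial G_1)$, which are disjoint by Lemma \ref{lem:fold basic}\ref{enum:fold boundary 1}, taking $U, V \subset Y$ to be the preimages of the $(\eta/2)$-neighborhoods of $\partial G_3$ and $\partial G_1$ respectively, for some $\eta < \varepsilon$. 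This produces closed $Y_1, Y_2, Y_3 \subset Y$ with $A \subset Y_1$, $B \subset Y_3$, $Y_1 \cap Y_2 \subset V$, $Y_2 \cap Y_3 \subset U$, and $Y_1 \cap Y_3 = \emptyset$. Extending to $X$ by
\[ X_1 := Y_1 \cup f^{-1}(\overline{G_1 \smallsetminus G_2}), \quad X_2 := Y_2, \quad X_3 := Y_3 \cup f^{-1}(\overline{G_3 \smallsetminus G_2}), \]
property \ref{enum:separate}, Lemma \ref{lem:fold basic}\ref{enum:fold boundary 2}, and the disjointness of the finite sets $\partial G_1, \partial G_3$ (so that, for $\eta$ small, their $\eta/2$-neighborhoods are disjoint) will yield $X_1 \cap X_3 = \emptyset$, $X_1 \cap X_2 \subset f^{-1}((\partial G_1)_{\eta/2})$, and $X_2 \cap X_3 \subset f^{-1}((\partial G_3)_{\eta/2})$. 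I would then pick a continuous retraction $r : G \to G$ that collapses the $(\eta/2)$-neighborhood of each point of $\partial G_1 \cup \partial G_3$ onto that point, is the identity outside the $\eta$-neighborhood of $\partial G_1 \cup \partial G_3$, and satisfies $d(r(y), y) < \eta$ throughout; such $r$ exists edge-by-edge in the finite graph $G$. Defining $g(x) := (\varphi|_{F_i})^{-1}(r(f(x)))$ for $x \in X_i$, the overlap condition forces $r(f(x)) \in \partial G_1$ on $X_1 \cap X_2$, and both candidate values coincide at the unique point of $F_1 \cap F_2$ above $r(f(x))$; the analogous statement at $X_2 \cap X_3$ makes $g$ well-defined and continuous, with $\dsup(\varphi \circ g, f) < \eta < \varepsilon$. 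Degenerate cases where $\partial G_1$ or $\partial G_3$ is empty are handled by Proposition \ref{prop:conn fold}, reducing to a direct factorization through $\varphi|_{F_1}$ or $\varphi|_{F_3}$.

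For $(3) \Rightarrow (1)$, I verify the Krasinkiewicz--Minc condition directly. Given disjoint closed $A, B \subset X$ and open $U \supset A$, $V \supset B$ (which may be shrunk so that $U \cap V = \emptyset$), two Urysohn functions $\alpha, \beta : X \to [0,1]$ with $\alpha|_A = 0$, $\alpha|_{X \smallsetminus U} = 1$, $\beta|_B = 1$, $\beta|_{X \smallsetminus V} = 0$ combine into $f := \tfrac{1}{2}(\alpha + \beta)$, which satisfies $f|_A = 0$, $f|_B = 1$, $f(X \smallsetminus U) \subset [\tfrac{1}{2}, 1]$, and $f(X \smallsetminus V) \subset [0, \tfrac{1}{2}]$. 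Take the simple fold $\varphi : F \to [0,1]$ with $G_1 = [0, \tfrac{2}{3}]$, $G_3 = [\tfrac{1}{3}, 1]$, and $F$ an arc, and fix $\varepsilon < \tfrac{1}{6}$; hypothesis (3) provides $g : X \to F$ with $\dsup(\varphi \circ g, f) < \varepsilon$. Setting $X_i := g^{-1}(F_i)$, one has $X_1 \cap X_3 = g^{-1}(F_1 \cap F_3) = \emptyset$; the bound $\varphi(g(x)) < \varepsilon < \tfrac{1}{3}$ on $A$ forces $g(x) \in F_1$, so $A \subset X_1$, and symmetrically $B \subset X_3$; on $X_1 \cap X_2 = g^{-1}(F_1 \cap F_2)$ the map $\varphi \circ g$ equals $\tfrac{2}{3}$ identically, so $|f - \tfrac{2}{3}| < \varepsilon$ there, placing $X_1 \cap X_2 \subset f^{-1}((\tfrac{2}{3} - \varepsilon, \tfrac{2}{3} + \varepsilon)) \subset V$; symmetrically $X_2 \cap X_3 \subset U$. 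This is exactly the Krasinkiewicz--Minc condition, so $X$ is hereditarily indecomposable.

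The main subtlety is the role-reversal in the Krasinkiewicz--Minc application for $(1) \Rightarrow (2)$: since K--M controls the overlap $Y_1 \cap Y_2$ through the neighborhood of $B$ rather than of $A$, one must take $B = f^{-1}(\partial G_1)$, opposite to the naive guess, so that this overlap lands near $\partial G_1$ as required by the gluing $F_1 \cap F_2$. Once the labeling is set up correctly, the separation property \ref{enum:separate} and the disjointness of $\partial G_1$ and $\partial G_3$ make the bookkeeping routine, and the retraction $r$ absorbs any remaining discrepancies in the overlaps.
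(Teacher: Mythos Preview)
Your proof is correct and follows the same overall strategy as the paper: both directions translate the Krasinkiewicz--Minc characterization, and in $(1)\Rightarrow(2)$ both arguments produce a three-set cover $X=X_1\cup X_2\cup X_3$ aligned with the fold and then lift piecewise via $(\varphi|_{F_i})^{-1}$.

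There is one technical variation worth noting. For $(1)\Rightarrow(2)$ the paper applies Krasinkiewicz--Minc to \emph{all} of $X$, taking $K_i=f^{-1}(\overline{G_i\smallsetminus G_2})$ as the two disjoint closed sets with neighborhoods chosen so that $f$ carries their fringes into small ``fans'' around the points of $\partial G_1\cup\partial G_3$; it then uses a Tietze-style extension on each fan to build an auxiliary map $h\colon X\to G$ with $h(X_i)\subset G_i$ and $h(X_1\cap X_2)\subset\partial G_1$, $h(X_2\cap X_3)\subset\partial G_3$, before lifting. You instead localize, applying Krasinkiewicz--Minc to the subspace $Y=f^{-1}(G_2)$ with $A=f^{-1}(\partial G_3)$, $B=f^{-1}(\partial G_1)$, and replace the Tietze step by a single global retraction $r\colon G\to G$ collapsing small neighborhoods of the boundary points. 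Your route is a bit more streamlined: the retraction is explicit and avoids the edge-by-edge extension argument, while the paper's version keeps everything at the level of $X$ throughout. Both hinge on the same ``role-reversal'' you correctly flag. (One minor citation slip: the degenerate cases $\partial G_1=\emptyset$ or $\partial G_3=\emptyset$ are handled by the remark \emph{preceding} Proposition~\ref{prop:conn fold}, not by the proposition itself.)

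Your $(3)\Rightarrow(1)$ is essentially identical to the paper's, with concrete choices $G_1=[0,\tfrac23]$, $G_3=[\tfrac13,1]$, $\varepsilon<\tfrac16$ in place of the paper's parameters $u',v'$.
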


\begin{proof}
To show \ref{enum:factor fold 1} $\Rightarrow$ \ref{enum:factor fold 2}, suppose that \ref{enum:factor fold 1} holds.  Let $G$ be a graph, $f: X \to G$ a map, $\varphi: F \to G$ a simple fold, and fix $\varepsilon > 0$.  As in Definition \ref{defn:simple fold}, denote $G_i = \varphi(F_i)$ for $i = 1,2,3$.

Suppose that $\partial (G_i \smallsetminus G_2) = \{y^i_1,\ldots,y^i_{m(i)}\}$ for $i = 1,3$.  Each of these points $y^i_j$ is the vertex point of a finite fan $Y^i_j \subset G_2$ (meaning $Y^i_j$ is the union of finitely many arcs, each having $y^i_j$ as one endpoint, and which are otherwise pairwise disjoint) such that $Y^i_j$ is the closure of an open neighborhood $O^i_J$ of $y^i_j$ in $G_2$, and the diameter of $Y^i_j$ is less than $\varepsilon$.  Let $K_i = f^{-1}(\overline{G_i \smallsetminus G_2})$.  Then $K_1 \cap K_3 = \emptyset$ by \ref{enum:separate} of Definition \ref{defn:simple fold}.

For $i = 1,3$, choose neighborhoods $U_i$ of $K_i$ so that $f(\overline{U_i \smallsetminus K_i}) \subset \bigcup_j O^i_j$.  By \cite{KM77}, there exist closed sets $X_i$, $i = 1,2,3$, such that
\begin{itemize}
\item $X = X_1 \cup X_2 \cup X_3$,
\item $K_i \subset X_i$ for $i = 1,3$,
\item $X_1 \cap X_3 = \emptyset$,
\item $X_1 \cap X_2 \subset U_3$, and
\item $X_2 \cap X_3 \subset U_1$.
\end{itemize}
See Figure \ref{fig:factor fold} for an illustration.

\begin{figure}
\begin{center}
\includegraphics{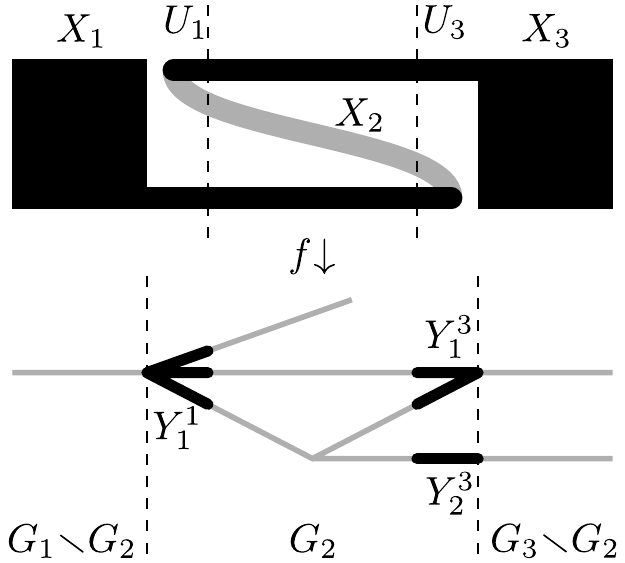}
\end{center}

\caption{An illustration of the situation in the proof of Theorem \ref{thm:factor fold}.}
\label{fig:factor fold}
\end{figure}

Let
\[ A = [X_1 \smallsetminus U_3] \cup [X_2 \smallsetminus (U_1 \cup U_3)] \cup [X_3 \smallsetminus U_1] ,\]
and consider the restriction $f {\upharpoonright}_A$.  Observe that
\[ X \smallsetminus A = [(X_2 \cup X_3) \cap U_1] \cup [(X_1 \cup X_2) \cap U_3] .\]

We extend $f {\upharpoonright}_A$ to a map $h: X \to G$ as follows.  Observe that $f((X_2 \cup X_3) \cap U_1) \subset \bigcup_j Y^1_j$.  In fact, for each $j = 1,\ldots,m(1)$, since $[(X_2 \cup X_3) \cap U_1] \cap X_1 = \emptyset$ and $f^{-1}(y^1_j) \subset f^{-1}(\overline{G_1 \smallsetminus G_2}) = K_1 \subset X_1$, we have that $f((X_2 \cup X_3) \cap U_1) \subset \bigcup_j (Y^1_j \smallsetminus \{y^1_j\})$.  Let $L$ be an arc in $Y^1_j$ with one endpoint $y^1_j$ and the other endpoint equal to an endpoint of the fan $Y^1_j$.  Let $L' =  f^{-1}(L) \cap (X_2 \cup X_3) \cap U_1$.  Then $L'$ is closed and open in $(X_2 \cup X_3) \cap U_1$.  By the Tietze extension theorem, we can define a continuous function $h_L: \overline{L'} \to L$ so that $h_L {\upharpoonright}_{\partial L'} = f {\upharpoonright}_{\partial L'}$ and $h_L(x) = y^1_j$ for all $x \in X_2 \cap X_3 \cap L'$.  We then let $h {\upharpoonright}_{\overline{L'}} = h_L$, and do this for all such arcs $L$ in the fans $Y^1_j$.  We proceed similarly to define $h$ on $(X_1 \cup X_2) \cap U_3$.

In this way, we obtain a continuous function $h: X \to G$ such that
\begin{itemize}
\item $h {\upharpoonright}_A = f {\upharpoonright}_A$,
\item $h(X_i) \subseteq G_i$ for $i = 1,2,3$,
\item $h(X_1 \cap X_2) \subset \partial(G_3 \smallsetminus G_2) = \partial G_1$ (see Lemma \ref{lem:fold basic}\ref{enum:fold boundary 2}), and
\item $h(X_2 \cap X_3) \subset \partial(G_1 \smallsetminus G_2) = \partial G_3$ (see Lemma \ref{lem:fold basic}\ref{enum:fold boundary 2}).
\end{itemize}
Observe that $\dsup(f,h) < \varepsilon$ since the diameters of the sets $Y^i_j$ are less than $\varepsilon$.

Now define $g: X \to F$ by $g(x) = \left( (\varphi {\upharpoonright}_{F_i})^{-1} \circ h \right) (x)$ if $x \in X_i$, for $i = 1,2,3$.  This is well-defined and continuous because of the above properties of $h$ and of $X_1,X_2,X_3$.  Then $g$ is as required so that \ref{enum:factor fold 2} holds.

The implication \ref{enum:factor fold 2} $\Rightarrow$ \ref{enum:factor fold 3} is trivial.

To show \ref{enum:factor fold 3} $\Rightarrow$ \ref{enum:factor fold 1}, suppose that \ref{enum:factor fold 3} holds.  Let $A,B \subset X$ be disjoint closed sets, and let $U$ be a neighborhood of $A$ and $V$ a neighborhood of $B$.  By \cite{KM77} it suffices to show that $X = X_1 \cup X_2 \cup X_3$ where $X_1,X_2,X_3$ are closed subsets of $X$ so that $A \subset X_1$, $B \subset X_3$, $X_1 \cap X_3 = \emptyset$, $X_1 \cap X_2 \subset U$, and $X_2 \cap X_3 \subset V$.

Let $f: X \to [0,1]$ be a map such that $f^{-1}(0) = A$ and $f^{-1}(1) = B$.  Choose $0 < u < v < 1$ such that $f^{-1}([0,u]) \subset U$ and $f^{-1}([v,1]) \subset V$.  Let $u' \in (0,u)$ and $v' \in (v,1)$.  Construct a simple fold $\varphi: F \to [0,1]$, where $F = F_1 \cup F_2 \cup F_3$ is an arc, such that $\varphi(F_1) = [0,v']$, $\varphi(F_2) = [u',v']$ and $\varphi(F_3) = [u',1]$.  Let $\varepsilon > 0$ be small enough so that $(u' - \varepsilon, u' + \varepsilon) \subset [0,u)$ and $(v' - \varepsilon, v' + \varepsilon) \subset [v,1]$.  By (3), there is a map $g: X \to F$ such that $\dsup(f, \varphi \circ g) < \varepsilon$.

Put $X_i = g^{-1}(F_i)$ for $i = 1,2,3$.  Then $X = X_1 \cup X_2 \cup X_3$, and clearly $X_1 \cap X_3 = \emptyset$.  To see that $X_1 \cap X_2 \subset V$, let $x \in X_1 \cap X_2$.  Then $(\varphi \circ g)(x) = v'$, and since $\dsup(f,\varphi \circ g) < \varepsilon$, we have $f(x) \in [v,1]$ and, hence, $x \in V$.  Similarly, $X_2 \cap X_3 \subset U$.  By \cite{KM77}, $X$ is hereditarily indecomposable.
\end{proof}

\bigskip
We now introduce notions which will be relevant when considering structured separators in the next section.

\begin{defn}
\label{defn:cons comp}
Let $A \subset G$ be regular, and let $B \subset \partial A$.
\begin{itemize}
\item $A$ has \emph{consistent complement relative to $B$} if for each component $C$ of $G \smallsetminus A$, either $\partial C \subseteq B$ or $\partial C \cap B = \emptyset$.
\item The \emph{$A$ side of $B$}, denoted $\sigma_B(A)$, is the closure of the union of all components of $G \smallsetminus B$ meeting $A$.
\end{itemize}
\end{defn}

If $B$ is empty, then $\sigma_B(A)$ is simply equal to the union of all components of $G$ which $A$ intersects.  In particular, if $A = \emptyset$ then $\sigma_B(A) = \emptyset$.

Suppose that $A$ and $B$ are both non-empty.  Observe that if $G$ is connected and $A$ has consistent complement relative to $B$, then in fact for any neighborhood $V$ of $B$, $\sigma_B(A)$ is equal to the closure of the union of all components of $G \smallsetminus B$ meeting $A \cap V$.  In fact, $\sigma_B(A)$ can be characterized as the unique closed (regular) set $D \subset G$ such that $\partial D = B$ and $D \cap V = A \cap V$ for some neighborhood $V$ of $B$.

\begin{prop}
\label{prop:side eq}
Let $G$ be connected, let $A,A' \subset G$ be non-empty regular sets, and let $B \subseteq \partial A \cap \partial A'$.  If $A$ and $A'$ each have consistent complement relative to $B$, and if there is a neighborhood $V$ of $B$ such that $A \cap V = A' \cap V$, then $\sigma_B(A) = \sigma_B(A')$.  Moreover, if $C$ is a component of $G \smallsetminus A$ with $\overline{C} \cap B \neq \emptyset$, then $C$ is also a component of $G \smallsetminus A'$.
\end{prop}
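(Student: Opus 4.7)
The plan is to use the characterization stated in the paragraph just before the proposition: provided $B \neq \emptyset$, $\sigma_B(A)$ is the unique closed regular set $D \subset G$ satisfying $\partial D = B$ and $D \cap V = A \cap V$ for some neighborhood $V$ of $B$. The case $B = \emptyset$ is trivial: then $\sigma_B(A) = \sigma_B(A') = G$ by connectedness of $G$, and the moreover statement is vacuous. So I assume $B \neq \emptyset$ throughout.

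The first assertion is almost immediate. Both $A$ and $A'$ have consistent complement relative to $B$ and satisfy $A \cap V = A' \cap V$, so each of $\sigma_B(A)$ and $\sigma_B(A')$ is the unique closed regular set $D \subset G$ with $\partial D = B$ and $D \cap V = A \cap V$. Hence the two coincide.

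For the moreover statement, let $C$ be a component of $G \smallsetminus A$ with $\overline{C} \cap B \neq \emptyset$. Since $B \subseteq A$, we have $C \cap B = \emptyset$ and so $\partial C \cap B = \overline{C} \cap B \neq \emptyset$; consistency of $A$ then forces $\partial C \subseteq B$. I would first show that $C$ is in fact a component of $G \smallsetminus B$: $C$ is open and connected and lies in some component $W$ of $G \smallsetminus B$, and $\overline{C} \cap W = C$ because $\partial C \subseteq B$ is disjoint from $W$, so $C$ is clopen in $W$ and hence equals $W$ by connectedness. Writing $D = \sigma_B(A) = \sigma_B(A')$, I would next exploit the decomposition $G \smallsetminus B = \mathrm{int}(D) \cup (G \smallsetminus D)$ into disjoint open sets (using $\partial D = B$), so that each component of $G \smallsetminus B$ lies entirely on one side. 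The components meeting $A$ are exactly those lying in $D$ (they appear in the union whose closure defines $\sigma_B(A)$), and symmetrically the components meeting $A'$ are exactly those lying in $D$. Thus $C = W$, missing $A$, must lie in $G \smallsetminus D$; since $A' \subseteq D$, we get $C \cap A' = \emptyset$. Finally, $C$ is a maximal connected subset of $G \smallsetminus A'$, because any strict connected extension would have to cross $B \subseteq A'$ in order to leave $W$.

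The main obstacle I anticipate is the dichotomy step: showing that a component $W$ of $G \smallsetminus B$ lies in $D$ if and only if $W$ meets $A$. The forward direction is immediate from the definition of $\sigma_B(A)$. The reverse requires noting that a component of $G \smallsetminus B$ disjoint from the union defining $\sigma_B(A)$ is open and disjoint from that union, hence cannot lie in its closure. This symmetric description of $D$ from either $A$ or $A'$ is precisely what lets one transfer the property ``misses $A$'' over to ``misses $A'$'' for components of $G \smallsetminus B$, and thereby drives the moreover conclusion.
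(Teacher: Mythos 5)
Your proof is correct, and for the ``moreover'' clause it takes a genuinely different route from the paper's. The first assertion is handled identically in both (via the uniqueness characterization of $\sigma_B(A)$ recorded just before the proposition). For the second assertion, you first promote $C$ to a full component of $G \smallsetminus B$ (using consistency of $A$ to get $\partial C \subseteq B$), and then transfer ``misses $A$'' to ``misses $A'$'' through the already-established identity $D = \sigma_B(A) = \sigma_B(A')$, via the dichotomy that a component of $G \smallsetminus B$ lies in $D$ exactly when it meets $A$ (equivalently, meets $A'$); maximality of $C$ in $G \smallsetminus A'$ is then automatic because any larger connected set would have to cross $B \subseteq A'$. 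The paper argues instead at the level of $G \smallsetminus A'$: it picks a component $C'$ of $G \smallsetminus A'$ meeting $C \cap V$, shows $C' \subseteq C$ since $\partial C \subseteq B$ and $C' \cap B = \emptyset$, and rules out $C' \subsetneq C$ by invoking the consistent-complement hypothesis on $A'$ a second time to force $\partial C' \subseteq B$. Your version buys a cleaner logical structure --- the moreover clause becomes a formal consequence of the first assertion plus $A' \subseteq \sigma_B(A')$, and it spells out the facts $\partial D = B$ and the component dichotomy that the paper's observations assert without proof; the paper's version is more local and shorter, needing no global description of $\sigma_B(A)$ beyond its definition. Both arguments are complete and elementary.
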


\begin{proof}
That $\sigma_B(A) = \sigma_B(A')$ follows immediately from the observations after Definition \ref{defn:cons comp}.  For the moreover part, let $C$ be a component of $G \smallsetminus A$ with $\overline{C} \cap B \neq \emptyset$.  Since $C \cap A = \emptyset$ and $A \cap V = A' \cap V$ (where $V$ is the neighborhood of $B$ described in the statement of this proposition), we have $(C \cap V) \cap A' = \emptyset$.  Let $C'$ be a component of $G \smallsetminus A'$ meeting $C \cap V$.

Obviously $C' \subseteq C$, since $\partial C \subseteq B$ and $C' \cap B = \emptyset$.  If $C' \neq C$, then there must be a point $x \in \partial C' \cap C$.  But since $A'$ has consistent complement relative to $B$, we must have $x \in B$, so $\emptyset \neq C \cap B \subset C \cap A$, a contradiction.  Therefore $C' = C$.
\end{proof}

\begin{prop}
\label{prop:define fold}
Let $G$ be connected, let $A \subset G$ be regular and non-empty, and let $B_1,B_2 \subseteq \partial A$ with $B_1 \cup B_2 = \partial A$ and $B_1 \cap B_2 = \emptyset$.  Suppose $A$ has consistent complement relative to $B_1$ and to $B_2$.  Let $G_1 = \sigma_{B_1}(A)$, $G_2 = A$, and $G_3 = \sigma_{B_2}(A)$.  Then $G_1$, $G_2$, and $G_3$ define a simple fold on $G$ (i.e.\ they satisfy properties \ref{enum:regular}, \ref{enum:G1 G3}, and \ref{enum:separate}).
\end{prop}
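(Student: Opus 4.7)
The plan is to decompose $G \smallsetminus A$ into components, sort them into two classes by whether their boundaries lie in $B_1$ or $B_2$, and then read off the three required properties from this decomposition. The key structural observation, which uses both consistent-complement hypotheses at once, is that for each component $D$ of $G \smallsetminus A$ the boundary $\partial D \subseteq \partial A = B_1 \sqcup B_2$ is contained in exactly one of $B_1$, $B_2$. Indeed, the consistent-complement condition relative to $B_i$ forces $\partial D$ to be entirely inside or entirely outside $B_i$; combining the two $i$'s leaves two genuine alternatives, since $\partial D = \emptyset$ is ruled out by $G$ being connected and $A$ non-empty. Write $\mathcal{D}_i$ for the (finitely many) components with $\partial D \subseteq B_i$.

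The heart of the argument is to identify $G_1 \smallsetminus A = \bigcup \mathcal{D}_2$ and, symmetrically, $G_3 \smallsetminus A = \bigcup \mathcal{D}_1$. If $D \in \mathcal{D}_2$, then $D$ avoids $B_1$, so it sits inside some component $C$ of $G \smallsetminus B_1$; and $\partial D \subseteq B_2 \subseteq A$ forces $C \cap A \neq \emptyset$, so $C$ is one of the components whose closure defines $G_1$, whence $D \subseteq G_1$. Conversely, a component $D \in \mathcal{D}_1$ is itself a component of $G \smallsetminus B_1$ (it is open in $G$, and $\overline{D} = D \cup \partial D \subseteq D \cup B_1$ shows it is also closed in $G \smallsetminus B_1$), but one that misses $A$; since distinct components of $G \smallsetminus B_1$ have disjoint closures off $B_1$, $D$ is disjoint from $G_1$. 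Independently, I will verify $A \subseteq G_1 \cap G_3$: for $x \in A$, the component of $A$ through $x$ is non-degenerate (by regularity of $A$), so some arc-germ at $x$ lies in $A$, placing $x$ in the closure of a component of $G \smallsetminus B_i$ that meets $A$, for both $i = 1, 3$.

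From this decomposition the three properties follow readily. Property (F2) is immediate from the disjoint union $G = A \sqcup \bigcup \mathcal{D}_1 \sqcup \bigcup \mathcal{D}_2$, which yields $G_1 \cup G_3 = G$ and $G_1 \cap G_3 = A = G_2$. For (F3), since each $D \in \mathcal{D}_2$ satisfies $\overline{D} \subseteq D \cup B_2$, one obtains $\overline{G_1 \smallsetminus A} \subseteq (G_1 \smallsetminus A) \cup B_2$ and dually $\overline{G_3 \smallsetminus A} \subseteq (G_3 \smallsetminus A) \cup B_1$; the four pieces on the right are pairwise disjoint thanks to $B_1 \cap B_2 = \emptyset$ and $B_i \cap (G \smallsetminus A) = \emptyset$. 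For (F1), each $G_i$ is non-empty (it contains $A$) and is a finite union of closures of components of $G \smallsetminus B_i$, each closure being non-degenerate because $G$, as a graph, has no isolated points and $B_i$ is finite.

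I expect the main obstacle to be the structural step that each component $D \in \mathcal{D}_1$ actually coincides with a component of $G \smallsetminus B_1$, and therefore cannot ``leak'' into one of the components whose closures build $G_1$. This is where both consistent-complement hypotheses do their essential work, in concert with the graph structure of $G$ (so that closures of distinct components of $G \smallsetminus B_1$ meet only in $B_1$). Once this is in hand, the verifications of (F1), (F2), (F3) are essentially bookkeeping.
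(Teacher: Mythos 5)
Your proposal is correct and takes essentially the same route as the paper's own proof: both arguments hinge on the observation that each component $D$ of $G \smallsetminus A$ has $\partial D$ contained entirely in $B_1$ or entirely in $B_2$ (via consistent complement), and hence lies in exactly one of $\sigma_{B_1}(A)$, $\sigma_{B_2}(A)$. You merely package this as the explicit decomposition $G = A \sqcup \bigcup\mathcal{D}_1 \sqcup \bigcup\mathcal{D}_2$ with $G_1 = A \cup \bigcup\mathcal{D}_2$ and $G_3 = A \cup \bigcup\mathcal{D}_1$ before reading off (F1)--(F3), and you are in fact slightly more careful than the paper in justifying $A \subseteq \sigma_{B_i}(A)$ at points of $B_i$.
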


\begin{proof}
Note that if $A = G$, then $G_1 = G_2 = G_3 = G$, which define a simple fold.  We suppose therefore that $A \neq G$, in which case at least one of $B_1$ and $B_2$ is non-empty.

Clearly $G_1$, $G_2$, and $G_3$ are all regular subsets of $G$, so \ref{enum:regular} holds.

Consider \ref{enum:G1 G3}.  By definition, it is clear that $A \subseteq \sigma_{B_1}(A)$ and $A \subseteq \sigma_{B_2}(A)$, thus $G_2 \subseteq G_1 \cap G_3$.  For the reverse inclusion, suppose $x \in G \smallsetminus G_2 = G \smallsetminus A$, and let $C$ be the component of $G \smallsetminus A$ containing $x$.  Because $G$ is connected, $\overline{C} \cap A \neq \emptyset$, and either $\partial C \subseteq B_1$ or $\partial C \subseteq B_2$ since $A$ has consistent complement relative to $B_1$ and to $B_2$.  In the former case, we have $C \cap \sigma_{B_1}(A) = \emptyset$, and in the latter case we have $C \cap \sigma_{B_2}(A) = \emptyset$.  In either case, $x \notin G_1 \cap G_3$.  Thus $G_1 \cap G_3 \subseteq G_2$.

To see that $G_1 \cup G_3 = G$, let $x \in G$, and assume $x \notin A$ (since $A = G_2 = G_1 \cap G_3$).  Let $C$ be the component of $G \smallsetminus A$ containing $x$.  Again $\overline{C} \cap A \neq \emptyset$, and either $\partial C \subseteq B_1$ or $\partial C \subseteq B_2$.  If $\partial C \subseteq B_1$, then since $\sigma_{B_2}(A) \supset A \supset B_1$, it is clear that $C \subset \sigma_{B_2}(A)$.  Similarly, if $\partial C \subseteq B_2$, then $C \subset \sigma_{B_1}(A)$.  Thus in any case, $x \in G_1 \cup G_3$.

For \ref{enum:separate}, let $x \in \overline{G_1 \smallsetminus G_2}$.  If $x \in A$, then we must have $x \in B_2$, and in this case $x \notin \overline{\sigma_{B_2}(A) \smallsetminus A} = \overline{G_3 \smallsetminus G_2}$, since one can find a neighborhood of $x$ which meets only $A$ and components of $G \smallsetminus A$ whose closures meet $B_2$.  On the other hand, if $x \notin A$, then $x \notin \sigma_{B_2}(A) = G_3$ because $x \in \sigma_{B_1}(A)$ and $\sigma_{B_1}(A) \cap \sigma_{B_2}(A) = A$.  Thus in any case, $x \notin \overline{G_3 \smallsetminus G_2}$.  Therefore $\overline{G_1 \smallsetminus G_2} \cap \overline{G_3 \smallsetminus G_2} = \emptyset$.
\end{proof}

We remark that if $\partial A = B_1 \cup B_2$ and $B_1 \cap B_2 = \emptyset$, and if $A$ has consistent complement relative to $B_1$, then $A$ automatically has consistent complement relative to $B_2$ as well.

\section{Stairwells}
\label{sec:stairwells}

We pause here to give an outline of the remainder of the proof of Theorem \ref{thm:hered indec span zero}, which is presented in full in Section \ref{sec:applications}.  Beginning with a hereditarily indecomposable continuum $X$ with span zero, in the Hilbert cube $[0,1]^{\mathbb{N}}$, we fix some $\varepsilon > 0$ and let $I \approx [0,1]$ be an arc which is close to $X$ in Hausdorff distance.  Our task is to produce an $\varepsilon$-map from $X$ to $I$, which would imply $X$ is arc-like, and hence $X$ is homeomorphic to the pseudo-arc by Bing's characterization \cite{bing51}.

Because $X$ has span zero, it is tree-like \cite{lelek79}, so we can choose a tree $T \subset [0,1]^{\mathbb{N}}$ and a map $f: X \to T$ such that $d(x,f(x))$ is small for all $x \in X$.  According to Theorem \ref{thm:span separator}, the set $M = \{(x,y) \in T \times I: d(x,y) < \frac{\varepsilon}{6}\}$ separates $T \times \{0\}$ from $T \times \{1\}$ in $T \times I$ provided $T$ and $I$ are chosen close enough to $X$.  If we can find a map $h: X \to M$, $h(x) = (h_1(x),h_2(x))$, such that $d(h_1(x),f(x))$ is small for all $x \in X$, then by the definition of $M$ and choice of $f$, it follows that $h_2(x)$ is close to $x$ for all $x \in X$, and so $h_2$ will be an $\varepsilon$-map once appropriate care is taken with constants.

To obtain this map $h: X \to M$, we use our assumption that $X$ is hereditarily indecomposable.  According to Theorem \ref{thm:factor fold}, the map $f: X \to T$ can be (approximately) factored through any simple fold $\phi: F \to T$.  Our method is to inspect the structure of the separator $M$ and use a sequence of simple folds to match the continuum $X$ and map $f$ with the pattern of $M$ and the first coordinate projection $\pi_1$.

In order to accomplish this, we introduce in this section a special type of separator (one with a ``stairwell structure'') which has a positive integer measure of complexity (the ``height'' of the stairwell).  It follows from Theorem \ref{thm:get stairwell} below that $M$ contains a subset which is a separator with a stairwell structure.  We then prove in the next section that one can use a sequence of simple folds to effectively reduce the height of a stairwell.  The proof of Theorem \ref{thm:hered indec span zero} is then completed by induction (note from Definition \ref{defn:stairwell} below that if $S$ has a stairwell structure of height $1$, then $\pi_1 {\upharpoonright}_S$ is one-to-one and, hence, a homeomorphism).

\bigskip
Given a set $X$, let $X_\star = X \times [0,1]$.  Define $\pi: X_\star \to X$ by $\pi(x,t) = x$.  Given a function $f: X \to Y$, define $f_\star: X_\star \to Y_\star$ by $f_\star(x,t) = (f(x),t)$.

\begin{defn}
\begin{itemize}
\item A collection $\langle B_1,\ldots,B_n \rangle$ of finite subsets of $G$ is \emph{generic} if $B_i$ is disjoint from the set of branch points and endpoints of $G$ for each $i$, and $B_i \cap B_j = \emptyset$ whenever $i \neq j$.
\item A subset $S \subset G_\star$ is \emph{straight} if $S$ is closed, $\pi$ is one-to-one on $S$, and $\pi(S)$ is regular.  The \emph{end set} of a straight subset $S \subset G_\star$ is $\setends{S} = S \cap \pi^{-1}(\partial \pi(S))$.
\end{itemize}
\end{defn}

See Figure \ref{fig:straight} for an example of a straight set and its end set.

Observe that if $S \subset G_\star$ is straight then $\pi$, restricted to $S \smallsetminus \setends{S}$, is an open mapping from $S \smallsetminus \setends{S}$ to $G$ (see Figure \ref{fig:straight}).

\begin{figure}
\begin{center}
\includegraphics{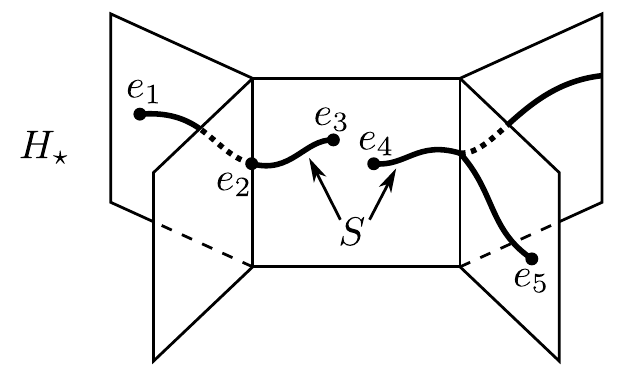}
\end{center}

\caption{An example of a straight set $S$ in $H_\star = H \times [0,1]$, where $H$ is a graph homeomorphic to the letter ``$\mathsf{H}$''.  In this example, $S$ has two connected components.  The end set of $S$ is $\setends{S} = \{e_1,e_2,e_3,e_4,e_5\}$.}
\label{fig:straight}
\end{figure}

\begin{defn}
\label{defn:stairwell}
Let $S \subset G_\star$.  A \emph{stairwell structure for $S$ of height $k$} is a tuple $\langle S_1,\ldots,S_k \rangle$ such that:
\begin{enumerate}[label=\textbf{(S\arabic{*})}]
\item \label{enum:straight} $S_1,\ldots,S_k$ are non-empty straight subsets of $G_\star$ with $S = S_1 \cup \cdots \cup S_k$;
\item \label{enum:ends decomp} For each $i = 1,\ldots,k$, $\setends{S_i} = \alpha_i \cup \beta_i$, where $\alpha_i$ and $\beta_i$ are disjoint finite sets, $\alpha_1 = \emptyset = \beta_k$, and $\beta_i = \alpha_{i+1}$ for each $i = 1,\ldots,k-1$;
\item \label{enum:same side} For each $i = 1,\ldots,k-1$ there is a neighborhood $V$ of $\pi(\beta_i) = \pi(\alpha_{i+1})$ such that $\pi(S_i) \cap V = \pi(S_{i+1}) \cap V$;
\item \label{enum:consistent} For each $i = 1,\ldots,k$, $\pi(S_i)$ has consistent complement relative to $\pi(\alpha_i)$ and to $\pi(\beta_i)$;
\item \label{enum:generic} The family $\langle \pi(\alpha_2),\ldots,\pi(\alpha_k) \rangle$ (which is equal to $\langle \pi(\beta_1),\ldots,\pi(\beta_{k-1}) \rangle$) is generic in $G$.
\end{enumerate}
\end{defn}

See Figure \ref{fig:stairwell} for a simple example of a set with a stairwell structure.

Note that even though the sets $S_1,\ldots,S_k$ are all non-empty, we do allow for the possibility that $\alpha_i = \emptyset$ for some values of $i \in \{2,\ldots,k\}$.

\begin{figure}
\begin{center}
\includegraphics{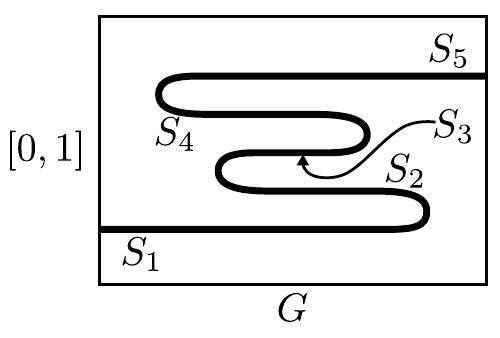}
\end{center}

\caption{An example of a set with a stairwell structure of height $5$ in $G_\star$, where $G$ is an arc.}
\label{fig:stairwell}
\end{figure}

We make the following observation: if $S \subset G_\star$ is a set with a stairwell structure, and if $C$ is a component of $G$ such that $S_i \cap C_\star \neq \emptyset$ for each $i = 1,\ldots,k$, then $S \cap C_\star$ has a stairwell structure obtained by intersecting each of the sets $S_i\alpha_i,\beta_i,$ with $C_\star$.

Note that there is no requirement that a set $S \subset G_\star$ with a stairwell structure of height $k$ will satisfy $\pi(S) = G$.  Indeed, if $k$ is even this need not be the case.  However, it will follow from the next proposition (in fact from Claim \ref{claim:ceiling parity}) that if $k$ is odd then $\pi(S) = G$.  Here it is crucial that $\alpha_1 = \beta_k = \emptyset$ (see \ref{enum:ends decomp}).  The reader is encouraged to draw a couple of examples of sets with stairwell structures of even and odd heights in $G_\star$, for $G$ a simple graph such as an arc, circle, or simple triod, to explore these possibilities.

Though we will not technically need the next proposition in the sequel, it serves to clarify the connection between separators in $G_\star$ and sets with stairwell structures.

\begin{prop}
\label{prop:odd separate}
If $G$ is a connected graph, then a set $S \subset G \times (0,1)$ with a stairwell structure of odd height separates $G \times \{0\}$ from $G \times \{1\}$ in $G_\star$.
\end{prop}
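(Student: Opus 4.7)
The strategy is to exhibit a locally constant function $\nu\colon G_\star \setminus S \to \mathbb{Z}/2\mathbb{Z}$ that takes different values on $G \times \{0\}$ and on $G \times \{1\}$. For each $(y,t) \in G_\star \setminus S$, set
\[ \nu(y,t) = \bigl|\{ i : y \in \pi(S_i) \text{ and } t_i(y) > t \}\bigr| \pmod{2}, \]
where $t_i(y)$ is the unique height with $(y, t_i(y)) \in S_i$ (well defined by straightness of $S_i$). Since $G \times \{0\}$ and $G \times \{1\}$ are each connected and lie in $G_\star \setminus S$ (as $S \subset G \times (0,1)$), each is contained in a single component of $G_\star \setminus S$; if $\nu$ distinguishes the two, those components are distinct, and separation follows.

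For local constancy of $\nu$, the only delicate case is near a boundary point $y^* \in \pi(\beta_i) = \pi(\alpha_{i+1})$. By the genericity condition \ref{enum:generic}, such a $y^*$ lies on the boundary of exactly the two sets $\pi(S_i)$ and $\pi(S_{i+1})$; by \ref{enum:same side} these two sets agree in a neighborhood of $y^*$, so as $y$ crosses $y^*$ it simultaneously enters or leaves both, changing the count defining $\nu$ by $\pm 2$ and preserving its parity. (The "between" region where exactly one of $\pi(S_i)$, $\pi(S_{i+1})$ contributes is cut off from the rest by $S_i \cup S_{i+1}$, so its distinct parity does not contradict local constancy.) Finally $\nu(y,1) = 0$ for every $y$ since every $t_i(y)$ lies in $(0,1)$.

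It remains to show $\nu(y,0) \equiv 1 \pmod{2}$ for every $y \in G$; equivalently that $\phi(y) := |\{i : y \in \pi(S_i)\}|$ is odd whenever $k$ is. The argument above already shows $\phi$ is constant mod $2$ on the connected graph $G$, so the task is to identify this constant as $k \bmod 2$. I would do this by forming the iterated symmetric differences $A_i = \pi(S_1) \triangle \pi(S_2) \triangle \cdots \triangle \pi(S_i)$ (interpreted via the mod $2$ characteristic function) and proving inductively that $\partial A_i = \pi(\beta_i)$: at any $y^* \in \pi(\alpha_i) = \pi(\beta_{i-1})$ the boundary pieces of $A_{i-1}$ (coming from $\pi(S_{i-1})$) and of $\pi(S_i)$ coincide by \ref{enum:same side} and cancel in the symmetric difference, while the new boundary $\pi(\beta_i)$ is untouched by genericity. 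Setting $i = k$ and using $\beta_k = \emptyset$ yields $\partial A_k = \emptyset$, so $A_k$ is clopen in $G$, hence $A_k \in \{\emptyset, G\}$, and the set $\{ y : \phi(y) \text{ is odd} \}$ equals one of these.

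The main obstacle is showing $A_k = G$ (rather than $\emptyset$) precisely when $k$ is odd. I expect to resolve this either by tracking the inductive construction of the $A_i$ quantitatively — using that $A_1 = \pi(S_1)$ is nonempty and that successive steps propagate a controlled "side" determined by $\alpha_1 = \emptyset$ — or, more concretely, by evaluating $\phi$ at a single convenient point, such as one just outside $\pi(S_k)$ on the far side of $\pi(\alpha_k)$, where \ref{enum:same side} together with the genericity of $\langle \pi(\alpha_2),\ldots,\pi(\alpha_k)\rangle$ pin down all memberships $y \in \pi(S_j)$ and let the parity of $\phi(y)$ be read off directly. Once $\phi$ is established to be odd on all of $G$, the parity function $\nu$ takes the value $1$ on $G \times \{0\}$ and $0$ on $G \times \{1\}$, so these two connected sets lie in disjoint components of $G_\star \setminus S$, and $S$ separates them as claimed.
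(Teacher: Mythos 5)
Your overall strategy coincides with the paper's: a mod~$2$ count of the sheets lying above a point of $G_\star \smallsetminus S$, local constancy of this count via genericity \ref{enum:generic} and the same-side condition \ref{enum:same side} (the two sheets meeting at a wedge tip enter or leave together, so the count changes by $0$ or $2$), and the observation that the count is $0$ on $G \times \{1\}$; this is exactly the paper's partition into $V_1,V_2$ and its openness claim. The genuine gap is where you yourself flag it: you never prove that the count is odd on $G \times \{0\}$, i.e.\ the paper's Claim \ref{claim:ceiling parity} that for every $x \in G$ the number of $i$ with $x \in \pi(S_i)$ is odd. Your reduction to ``$A_k$ is clopen, hence $A_k \in \{\emptyset, G\}$'' only repackages the local constancy you already have; deciding between $\emptyset$ and $G$ is the entire content, and neither of your two proposed finishes is carried out. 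The second one cannot work as stated: near $\pi(\alpha_k)$, conditions \ref{enum:same side} and \ref{enum:generic} control only $\pi(S_{k-1})$ and $\pi(S_k)$, and say nothing about whether a point just beyond $\pi(\alpha_k)$ lies in $\pi(S_j)$ for $j \leq k-2$ --- that is a global question, not one readable from local data.

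Symptomatically, your argument never invokes the consistent-complement condition \ref{enum:consistent}, which is precisely the hypothesis that does the global work (your local cancellation uses only \ref{enum:same side} and \ref{enum:generic}, and those alone do not force odd parity). The paper's proof of the parity claim runs as follows: by \ref{enum:consistent} and Proposition \ref{prop:define fold}, $\sigma_{\pi(\alpha_i)}(\pi(S_i)) \cup \sigma_{\pi(\beta_i)}(\pi(S_i)) = G$ with intersection $\pi(S_i)$, and by \ref{enum:same side} together with Proposition \ref{prop:side eq} the sides chain across levels, $\sigma_{\pi(\beta_{i-1})}(\pi(S_{i-1})) = \sigma_{\pi(\alpha_i)}(\pi(S_i))$. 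Encoding membership of a fixed $x$ in these sides by a function $f\colon \{0,\ldots,k\} \to \{0,1\}$ with $f(0) = f(k) = 1$ (this is where $\alpha_1 = \beta_k = \emptyset$ enters), one gets that $f$ has no two consecutive zeros and that $x \in \pi(S_i)$ if and only if $f(i-1) = f(i) = 1$; since the number of sign changes of $f$ is even while $k$ is odd, the number of levels containing $x$ is odd. Some bookkeeping of this kind --- equivalently, an induction on $i$ identifying the parity of the count among $S_1,\ldots,S_i$ on each side of $\pi(\beta_i)$, which again requires \ref{enum:consistent} --- is what your sketch is missing, so as written the proposal does not yet prove the proposition.
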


\begin{proof}
Let $\langle S_1,\ldots,S_k \rangle$ be a stairwell structure for $S$, where $k$ is odd.

\begin{claim}
\label{claim:ceiling parity}
For each $x \in G$, the number of integers $i \in \{1,\ldots,k\}$ such that $x \in \pi(S_i)$ is odd.
\end{claim}

\begin{proof}[Proof of Claim \ref{claim:ceiling parity}]
\renewcommand{\qedsymbol}{\textsquare (Claim \ref{claim:ceiling parity})}
Fix $x \in G$, and define $f: \{0,\ldots,k\} \to \{0,1\}$ by
\[ f(i) =
\begin{cases}
1 &\textrm{if } i = 0 \textrm{ or } i = k \textrm{ or } x \in \sigma_{\pi(\beta_i)}(\pi(S_i)) \\
0 &\textrm{ otherwise.}
\end{cases} \]

For each $i = 2,\ldots,k$, by property \ref{enum:same side}, Proposition \ref{prop:side eq}, and the fact that $\beta_{i-1} = \alpha_i$, we have that $\sigma_{\pi(\beta_{i-1})}(\pi(S_{i-1})) = \sigma_{\pi(\beta_{i-1})}(\pi(S_i)) = \sigma_{\pi(\alpha_i)}(\pi(S_i))$.  By Proposition \ref{prop:define fold} and property \ref{enum:G1 G3}, it follows that $\sigma_{\pi(\beta_{i-1})}(\pi(S_{i-1})) \cup \sigma_{\pi(\beta_i)}(\pi(S_i)) = G$ for each $i = 2,\ldots,k-1$.  This means that there are no contiguous blocks of more than one integer in $f^{-1}(0)$.  Observe that $x \in \pi(S_i)$ if and only if $f(i-1) = f(i) = 1$.  It follows that if $N_1$ is the number of integers $i \in \{1,\ldots,k\}$ such that $x \in \pi(S_i)$ and $N_2$ is the number of integers $i \in \{1,\ldots,k\}$ such that $f(i-1) \neq f(i)$, then $N_1 + N_2 = k$.

Since $f(0) = f(k) = 1$, we have that $N_2$ is even.  By hypothesis, $k$ is odd.  Thus $N_1$ must be odd.
\end{proof}

Given $(x,t) \in G_\star \smallsetminus S$, define $N(x,t) =$ the cardinality of the set of integers $i \in \{1,\ldots,k\}$ such that $(x,s) \in S_i$ for some $s > t$.  Let $V_1 = \{(x,t) \in G_\star \smallsetminus S: N(x,t)$ is odd$\}$ and $V_2 = \{(x,t) \in G_\star \smallsetminus S: N(x,t)$ is even$\}$.  From Claim \ref{claim:ceiling parity}, we have $G \times \{0\} \subset V_1$, and clearly $G \times \{1\} \subset V_2$ and $V_1 \cup V_2 = G_\star \smallsetminus S$.

\begin{claim}
\label{claim:separation}
$V_1$ and $V_2$ are open in $G_\star \smallsetminus S$.
\end{claim}

\begin{proof}[Proof of Claim \ref{claim:separation}]
\renewcommand{\qedsymbol}{\textsquare (Claim \ref{claim:separation})}
Fix $(x,t) \in V_1$.  Let $W$ be a small connected open neighborhood of $x$ in $G$, and let $\delta > 0$ be such that $U = W \times (t-\delta,t+\delta)$ is a neighborhood of $(x,t)$ in $G_\star$ which is disjoint from $S$.

If $x \notin \pi(\setends{S_i})$ for each $i$, then we may assume $W$ is small enough so that for each $i$, either $W \cap \pi(S_i) = \emptyset$ or $W \subset \pi(S_i)$.  It follows easily that for each $(x',t') \in U$, $N(x',t') = N(x,t)$.  Thus $U \subset V_1$.

If $x \in \pi(\setends{S_i})$ for some $i$, say $x \in \pi(\beta_i)$, then by \ref{enum:generic} $x \notin \pi(\setends{S_j})$ for each $j \notin \{i,i+1\}$, and so we may assume $W$ is small enough so that for each $j \notin \{i,i+1\}$, either $W \cap \pi(S_j) = \emptyset$ or $W \subset \pi(S_j)$.  Moreover, we may assume $W$ is small enough so that $W \cap \pi(S_i) = W \cap \pi(S_{i+1})$.

If there is no $s > t$ such that $(x,s) \in S_i$, then it is easy to see that $N(x',t') = N(x,t)$ for all $(x',t') \in U$.  Suppose then that there exists $s > t$ such that $(x,s) \in S_i$ (so that $(x,s) \in \beta_i$).  Let $(x',t') \in U$.  If $x' \in \pi(S_i)$ then $x' \in \pi(S_{i+1})$ as well, and it is clear that $N(x',t') = N(x,t)$.  If $x' \notin \pi(S_i)$, then $x' \notin \pi(S_{i+1})$ as well, and so $N(x',t') = N(x,t) - 2$.  In any case, we have $(x',t') \in V_1$.  Thus $U \subset V_1$.

Therefore $V_1$ is open.  The proof that $V_2$ is open is identical.
\end{proof}

Thus $S$ separates $G \times \{0\}$ from $G \times \{1\}$ in $G_\star$.
\end{proof}

As a special case, consider a set $S \subset G \times (0,1)$ with a stairwell structure of height $1$.  In this case, $\pi$ maps $S$ homeomorphically onto $G$.

\begin{thm}
\label{thm:get stairwell}
Let $G$ be a graph.  Given any set $M \subseteq G \times (0,1)$ which separates $G \times \{0\}$ from $G \times \{1\}$ in $G_\star$ and any open set $U \subseteq G \times (0,1)$ with $M \subseteq U$, there exists a set $S \subset U$ with a stairwell structure of odd height.
\end{thm}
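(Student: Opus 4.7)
The plan is to approximate $M$ by a piecewise-linear separator $S_0\subset U$ arranged as a crooked $1$-manifold, and then to read off a stairwell structure by cutting $S_0$ at its turning points.

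First, let $V_0,V_1$ be the disjoint open subsets of $G_\star\setminus M$ containing $G\times\{0\}$ and $G\times\{1\}$ with $V_0\cup V_1=G_\star\setminus M$. Fix a very fine product cellular decomposition of $G_\star$ (subdividing each edge of $G$ into small sub-arcs and $[0,1]$ into fine levels), perturbed into general position so that no 0-cell lies on $M$ and no two interior vertices share an $x$-coordinate in $G$. Choose the mesh small enough that every 2-cell meeting $M$ lies inside $U$. Each 0-cell then receives a label $0$ or $1$ from its $V_i$-membership. Triangulate each 2-cell by a diagonal and form the standard midpoint PL separator $S_0$: within each triangle having two bichromatic edges, draw the straight segment between the midpoints of those edges. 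Since, by general position, no triangle has a vertical edge, every segment of $S_0$ has nonzero $x$-slope and projects injectively onto a sub-arc of $G$. Thus $S_0\subset U$ is a compact PL $1$-manifold separating $G\times\{0\}$ from $G\times\{1\}$; closed-curve components of $S_0$ may be discarded without affecting separation.

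Next, decompose each remaining arc of $S_0$ at its \emph{turning points} --- the points where $\pi$ has a local maximum or minimum along the arc. Each maximal monotone piece $S_\ell$ is a sub-arc of $S_0$ on which $\pi$ is injective; hence $S_\ell$ is straight, with $\pi(S_\ell)$ a sub-arc of $G$. Ordering the monotone pieces along each arc as $S_1,\ldots,S_m$, set $\alpha_\ell$ (respectively $\beta_\ell$) to be the adjacent turning point on the left (respectively right) along the arc, with $\alpha_1=\beta_m=\emptyset$ for arcs whose extremal endpoints lie on the side-boundary of $G_\star$ (over endpoints of $G$). Then $\beta_\ell=\alpha_{\ell+1}$ holds by construction, and since both monotone pieces incident to a turning point extend on the same $x$-side of it, the ``same side'' axiom \ref{enum:same side} is automatic. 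Axioms \ref{enum:consistent} and \ref{enum:generic} follow respectively from the local combinatorics near each turning point (via Proposition \ref{prop:side eq}) and from the general-position choice of the cell decomposition.

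The parity argument for odd total height uses the side-boundary of $G_\star$: at each endpoint $v$ of $G$, the labels along $\{v\}\times[0,1]$ change from $0$ at the bottom to $1$ at the top, forcing an odd number of arc-endpoints of $S_0$ on this vertical segment. Concatenating the local stairwell structures obtained from each arc of $S_0$ and summing parities then yields an odd total height $k$, so $\langle S_1,\ldots,S_k\rangle$ is a stairwell structure of odd height as required. The main obstacle I anticipate is this concatenation step: when $G$ has multiple endpoints or branch points, the several arcs of $S_0$ must be stitched together coherently into a single linear stairwell, and the parity bookkeeping must be done carefully; in the degenerate case when $G$ has no endpoints (e.g.\ $G$ is a circle) every component of $S_0$ is a closed curve, and one must cut such a loop open at a well-chosen turning point so that it contributes the correct parity. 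Handling these cases, together with the branch-point behaviour of the PL separator where several rectangular 2-cells share a vertical 1-cell, constitutes the technical heart of the argument.
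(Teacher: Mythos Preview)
Your first step---replacing $M$ by a PL graph separator $M'\subset U$ in general position, with finitely many ``side wedges'' whose $\pi$-images are distinct and avoid branch points---is exactly what the paper does (its Claim~\ref{claim:simple sep}). The genuine gap is the assembly step you yourself flag as ``the technical heart,'' and your proposed arc-ordering of the monotone pieces will not close it.

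Concretely, indexing the monotone pieces by their position along each arc of $S_0$ fails axiom \ref{enum:ends decomp} and \ref{enum:consistent} as soon as $G$ is not simply connected. If $G$ is a circle and $S_0$ is an essential closed curve with two turning points $t_1,t_2$, cutting at $t_1$ produces two monotone pieces $S_1,S_2$; but $\pi(t_1)\in\partial\pi(S_1)$, so $t_1\in\setends{S_1}$, yet you have declared $\alpha_1=\emptyset$ and $\beta_1=\{t_2\}$. Likewise, the single complementary component $G\smallsetminus\pi(S_1)$ has both $\pi(t_1)$ and $\pi(t_2)$ in its boundary, so consistent complement fails. (Relatedly, your claim that ``closed-curve components of $S_0$ may be discarded without affecting separation'' is false when $G_\star$ is an annulus: an essential closed curve may be the entire separator.) For graphs with several endpoints or branch points, concatenating the per-arc stairwells into a single linear sequence $S_1,\ldots,S_k$ is not just bookkeeping; there is no canonical linear order, and no reason the local parities combine to an odd global height.

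The paper's solution is to index by \emph{vertical height} rather than by arc-position. One partitions $G$ by a finite set $Z'$ (containing branch points and endpoints) so that over each elementary sub-arc $[a,b]$ there is at most one side wedge. Over each $a\in Z'$ the irreducible separator meets $\{a\}_\star$ in an odd number $j(a)$ of points, ordered by height; one sets $k=\max_a j(a)$, which is odd by construction. The $i$-th piece $S_i$ is then the union, over all $[a,b]$, of the height-$i$ strand above that sub-arc. The subtlety is that a side wedge over $[a,b]$ causes the number of strands to drop by two from one side to the other; to keep the height-labeling consistent across $[a,b]$, the paper \emph{adds zig-zags} to the higher strands there (Figure~\ref{fig:make stairwell}), so that each added zig-zag contributes its two new turning points to the correct end-sets. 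This height indexing makes the odd-parity claim immediate, makes \ref{enum:consistent} a local check over each $[a,b]$, and avoids entirely the concatenation problem you encountered.
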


\begin{proof}
Let $M \subset G \times (0,1)$ separate $G \times \{0\}$ from $G \times \{1\}$ in $G_\star$, and fix an open set $U \subseteq G \times (0,1)$ with $M \subseteq U$.

We say a set $S \subset G \times (0,1)$ \emph{irreducibly separates} $G \times \{0\}$ from $G \times \{1\}$ in $G_\star$ if $S$ separates these two sets, but no proper subset of $S$ does.  It is well known (see e.g.\ \cite[Theorems \S 46.VII.3 and \S 49.V.3]{kuratowski68}) that for any set $S \subset G \times (0,1)$ which separates $G \times \{0\}$ from $G \times \{1\}$, there is a closed set $S' \subseteq S$ which irreducibly separates $G \times \{0\}$ from $G \times \{1\}$.

Let $Z$ denote the set of all branch points and endpoints of $G$.  Given a set $L \subset G_\star$ and a point $(x,y) \in L$ such that $x \notin Z$, we say \emph{$L$ has a side wedge} at $(x,y)$ if there is a closed disk $D$ containing $(x,y)$ in its interior such that $L \cap D = C_1 \cup C_2$, where $C_1$ and $C_2$ are arcs which both have $x$ as an endpoint but are otherwise disjoint, $\pi$ is one-to-one on $C_1$ and on $C_2$, and $\pi(C_1) = \pi(C_2)$.

\begin{claim}
\label{claim:simple sep}
There exists a set $M' \subset U$ such that:
\begin{enumerate}[label=(\arabic{*})]
\item \label{enum:ss graph} $M'$ is a graph;
\item \label{enum:ss irred} $M'$ irreducibly separates $G \times \{0\}$ from $G \times \{1\}$ in $G_\star$;
\item \label{enum:ss T} There is a finite set $T \subset M'$ such that for all $(x,y) \in M' \smallsetminus T$, there is a neighborhood $V$ of $(x,y)$ such that $\pi$ maps $M' \cap V$ homeomorphically onto a neighborhood of $x$ in $G$;
\item \label{enum:ss wedge} For each $(x,y) \in T$, $M'$ has a side wedge at $(x,y)$;
\item \label{enum:ss turn pts 1} $T \cap Z_\star = \emptyset$; and
\item \label{enum:ss turn pts 2} If $(x_1,y_1)$ and $(x_2,y_2)$ are two distinct points in $T$, then $x_1 \neq x_2$.
\end{enumerate}
\end{claim}

\begin{proof}[Proof of Claim \ref{claim:simple sep}]S
\renewcommand{\qedsymbol}{\textsquare (Claim \ref{claim:simple sep})}
We leave it to the reader to show that there exists a set $M'$ having properties \ref{enum:ss graph}, \ref{enum:ss T}, \ref{enum:ss turn pts 1}, and \ref{enum:ss turn pts 2}, and which separates $G \times \{0\}$ from $G \times \{1\}$.  Replacing $M'$ by a subset (which, by abuse of notation, we also denote by $M'$) which irreducibly separates $G \times \{0\}$ from $G \times \{1\}$ in $G_\star$ accomplishes \ref{enum:ss irred}.  Let $G_\star \smallsetminus M' = R_0 \cup R_1$, where $R_0$ and $R_1$ are open in $G_\star \smallsetminus M'$, $G \times \{0\} \subset R_0$, and $G \times \{1\} \subset R_1$.

To achieve property \ref{enum:ss wedge}, consider a point $(x,y) \in T$.  Note that $(x,y)$ cannot be an endpoint of $M'$, because $x$ is not an endpoint of $G$ by \ref{enum:ss turn pts 1}, and $\pi(M' \cap V)$ is open for some neighborhood $V$ of $(x,y)$ by \ref{enum:ss T}.  If $(x,y)$ is not a branch point of $M'$, then it is easy to see that $M'$ has a side wedge at $(x,y)$, or else $\pi$ is one-to-one on $M'$ in a neighborhood of $(x,y)$ in which case we can remove $(x,y)$ from $T$. Again, by abuse of notation, we denote the resulting set by $M'$.

Suppose now that $(x,y)$ is a branch point of $M'$.  Let $D$ be a small closed disk containing $(x,y)$ in its interior such that $M' \cap D$ is the union of $n$ arcs $C_1,\ldots,C_n$, each having $(x,y)$ as an endpoint, and which are otherwise pairwise disjoint.  Because $M'$ is an irreducible separator, the complementary regions of $M'$ in $D$ alternate between $R_0$ and $R_1$.  It follows that $n$ is even.  Now we can modify $M'$ inside $D$ by replacing the arcs $C_1,\ldots,C_n$ with $\frac{n}{2}$ ``wedges'', as depicted in Figure \ref{fig:make simple}, and removing $(x,y)$ from $T$.  Some of the resultant wedges may be side wedges, whose ``tip'' points we add to $T$.  Obviously this can be done without compromising properties \ref{enum:ss graph}, \ref{enum:ss turn pts 1}, and \ref{enum:ss turn pts 2}, and without leaving $U$.

\begin{figure}
\begin{center}
\includegraphics{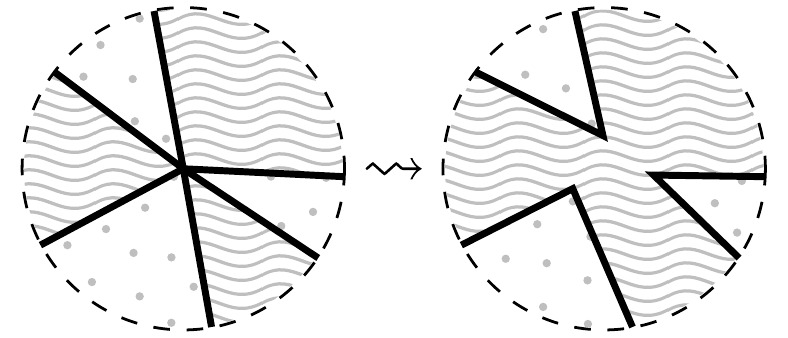}
\end{center}

\caption{Modifying a graph separator $M'$ in $G_\star$ in a small neighborhood of an unwanted branch point to remove the branch point.  The two sides, $R_0$ and $R_1$, of $G_\star \smallsetminus M'$ are indicated using wavy lines and dots, respectively.}
\label{fig:make simple}
\end{figure}

Once this is carried out for all the branch points of $M'$ which belong to $T$, one at a time, the resultant set satisfies property \ref{enum:ss wedge}.  It is easy to see that the resultant $M'$ still irreducibly separates $G \times \{0\}$ from $G \times \{1\}$ in $G_\star$.
\end{proof}

Given a finite set $B \subset G$, we say two points $a,b \in B$ are \emph{adjacent} if there is a component of $G \smallsetminus B$ whose closure contains both $a$ and $b$.

Let $M'$ be a set as described in Claim \ref{claim:simple sep}.  Because of property \ref{enum:ss turn pts 2}, there exists a finite set $Z' \subset G$ such that
\begin{itemize}
\item $Z \subseteq Z'$,  $Z' \cap \pi(T) = \emptyset$ and the closure of every component of $G \smallsetminus Z'$ is an arc;
\item If $a,b \in Z'$ are adjacent, then there is exactly one component of $G \smallsetminus Z'$ whose closure contains both $a$ and $b$ and  we will denote this arc by $[a,b]$); and
\item If $a,b \in Z'$ are adjacent, then $[a,b]_\star \cap T$ contains at most one point.
\end{itemize}

Figure \ref{fig:partition} illustrates what the set $M'\cap \pi^{-1}(A)$ might look like over some component
$A$ of $G \smallsetminus Z'$.

\begin{figure}
\begin{center}
\includegraphics{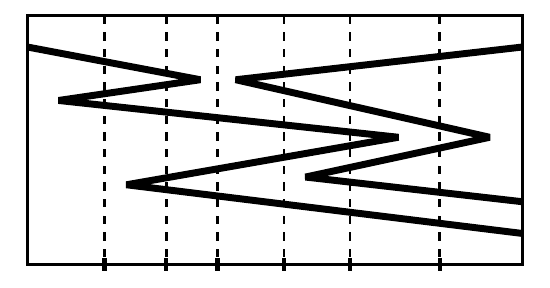}
\end{center}

\caption{Partitioning the graph into small arcs above each of which the separator $M'$ has at most one side wedge.}
\label{fig:partition}
\end{figure}

Observe that since $Z' \cap \pi(T) = \emptyset$ and since $M'$ irreducibly separates $G \times \{0\}$ from $G \times \{1\}$, for each point $a \in Z'$, the set $M' \cap \{a\}_\star$ contains an odd number of points.  Let $k$ be the maximum cardinality of $M' \cap \{a\}_\star$, among all $a \in Z'$.  Then in particular $k$ is odd.

Fix two adjacent points $a,b \in Z'$.  Let $M' \cap \{a\}_\star = \{(a,y_1),\ldots,(a,y_j)\}$, where $j \leq k$ is odd, and $y_1 < y_2 < \cdots < y_j$.  For each $i = 1,\ldots,j$, let $C_i$ be the component of $M' \cap [a,b]_\star$ containing the point $(a,y_i)$.

If there is no side wedge in $M' \cap [a,b]_\star$, then define $S^{[a,b]}_i = C_i$ for each $i = 1,\ldots,j$.

On the other hand, suppose that $M' \cap [a,b]_\star$ has a component $W$ which has a side wedge.  Assume without loss of generality that $a \in \pi(W)$, so that $b \notin \pi(W)$.  Clearly $W \cap \{a\}_\star$ consists of two consecutive points, say $(a,y_m)$ and $(a,y_{m+1})$ of $M'\cap \{a\}_\star$.  Then $C_m = C_{m+1} = W$.  Observe that $M' \cap \{b\}_\star$ contains exactly $j-2$ points.

For each $i = m+2,\ldots,j$, let $J_i \subset [a,b]$ be a closed subarc such that $J_i \cap \pi(W) = \emptyset$, and $J_{i+1}$ is between $J_i$ and $b$ for each $i = m+2,\ldots,j-1$.  For each $i = m+2,\ldots,j$, in a small neighborhood of $C_i$ in $U$, define three arcs $C_i^1,C_i^2,C_i^3$ such that
\begin{itemize}
\item $\pi$ is one-to-one on $C_i^p$ for each $p = 1,2,3$;
\item $C_i^1$ and $C_i^2$ have a common endpoint, and $C_i^2$ and $C_i^3$ have a common endpoint, but these three arcs are otherwise pairwise disjoint;
\item $C_i^1 \cap \{a\}_\star = C_i \cap \{a\}_\star$ and $C_i^3 \cap \{b\}_\star = C_i \cap \{b\}_\star$; and
\item $\pi(C_i^2) = J_i = \pi(C_i^1) \cap \pi(C_i^3)$.
\end{itemize}
We call this procedure ``adding a zig-zag'' to $C_i$.  Refer to Figure \ref{fig:make stairwell} for an illustration.

\begin{figure}
\begin{center}
\includegraphics{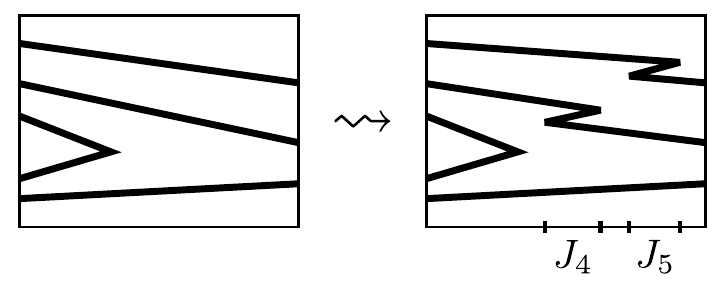}
\end{center}

\caption{Adding ``zig-zags'' to two components of $M' \cap [a,b]_\star$, above the arcs $J_4$ and $J_5$, in order to obtain a set with a stairwell structure.}
\label{fig:make stairwell}
\end{figure}

Now for each $i = 1,\ldots,m-1$, define $S^{[a,b]}_i = C_i$.  For $i = m,\ldots,k$, we define $S^{[a,b]}_i$ by defining the components of these sets in steps, as follows.

Decompose the side wedge $W$ into two arcs $W_m$ and $W_{m+1}$, where $\pi$ is one-to-one on each of $W_m$ and $W_{m+1}$, and $W_i$ contains $(a,y_i)$ for both $i = m,m+1$.  We start with $S^{[a,b]}_i = W_i$ for both $i = m,m+1$.  Then for each $i = m+2,\ldots,j$, in order, we start with $S^{[a,b]}_i = C_i^1$, and we add $C_i^2$ to $S^{[a,b]}_{i-1}$ and add $C_i^3$ to $S^{[a,b]}_{i-2}$.  Finally, for $i = j+1,\ldots,k$, let $S^{[a,b]}_i = \emptyset$.

Define $S_i = \bigcup \{S^{[a,b]}_i: a,b \in Z'$ are adjacent$\}$ for each $i = 1,\ldots,k$, and let $S = \bigcup_{i=1}^k S_i$.  Observe that $S$ is in $U$, and clearly $S$ irreducibly separates $G \times \{0\}$ from $G \times \{1\}$ because $M'$ does.

It is clear that $S^{[a,b]}_i$ is straight for each adjacent pair $a,b \in Z'$ and each $i = 1,\ldots,k$.  Moreover, if $a \in Z'$ and $S \cap \{a\}_\star = \{(a,y_1),\ldots,(a,y_j)\}$, where $j \leq k$ and $y_1 < \cdots < y_j$, then from the construction we see that $S^{[a,x]}_i \cap \{a\}_\star = \{(a,y_i)\}$ for any $x \in Z'$ adjacent to $a$ and each $i = 1,\ldots,j$ (and $S^{[a,x]}_i \cap \{a\}_\star = \emptyset$ if $i > j$).  It follows that $S_i$ is straight for each $i = 1,\ldots,k$.  Thus property \ref{enum:straight} holds.

Let $\alpha_1 = \beta_k = \emptyset$, and for each $i = 1,\ldots,k-1$, let $\beta_i = \alpha_{i+1} = S_i \cap S_{i+1}$.  The points of these sets $S_i \cap S_{i+1}$ are exactly the tips of side wedges and the zig-zag turning points.  Clearly all such points belong to the end sets of the sets $S_i$, and there are no other points in the end sets of the $S_i$'s because $S$ irreducibly separates $G \times \{0\}$ from $G \times \{1\}$ in $G_\star$.  Thus property \ref{enum:ends decomp} holds.

Properties \ref{enum:same side} and \ref{enum:generic} are immediate from the construction.

For property \ref{enum:consistent}, let $C$ be a component of $G \smallsetminus \pi(S_i)$ for some $i \in \{1,\ldots,k\}$.  Note that if $z\in C\cap Z'$, then $|M'\cap \{z\}_{\ast}|<i$.  If $C \subset [a,b]$ for some adjacent pair $a,b \in Z'$, then it is clear from the construction (refer to the right side of Figure \ref{fig:make stairwell}) that $\partial C \subset \pi(\alpha_i)$ or $\partial C \subset \pi(\beta_i)$.  Suppose, on the other hand, that $x_1,x_2 \in \partial C$ do not belong to the same component of $G \smallsetminus Z'$.  Let $a_1,\ldots,a_n \in Z'$ such that $a_p$ and $a_{p+1}$ are adjacent for each $p = 1,\ldots,n-1$, $x_1 \in [a_1,a_2]$, $x_2 \in [a_{n-1},a_n]$, and $[a_p,a_{p+1}] \subset C$ for all $p = 2,\ldots,n-2$.  For each $p = 1,\ldots,n$, let $j_p$ be the number of points in $S \cap \{a_p\}_\star$.  Then $j_p$ is odd for all $p = 1,\ldots,n$.
Since $a_i\in C$ for $p=2,\dots,n-1$, $j_p<i$ for $p=2,\dots,n-1$ and,  since $|j_p - j_{p+1}| = 0$ or $2$ for each $p = 1,\ldots,n-1$, $i\le j_1\le i+1$ and $i\le j_n\le i+1$. Moreover,
since $|j_p - j_{p+1}| = 0$ or $2$ for each $p = 1,\ldots,n-1$, we must have that $j_1$ and $j_n$ have the same parity and, hence, $j_1 = j_n$.
It is now easy to see that each of $x_1$ and $x_2$ corresponds to the tip point of a side wedge or a turning point of a zig-zag joining $S_{j_1-1}$ and $S_{j_1}$. Hence, if $i=j_1=j_n$, $\{x_1,x_2\}\subset \pi(\alpha_i)$ and, if $i=j_1-1=j_n-1$, then $\{x_1,x_2\}\subset \pi(\beta_i)$ and it follows that
\ref{enum:consistent} holds.

Since \ref{enum:straight}--\ref{enum:generic} hold  $\langle S_1,\ldots,S_k \rangle$ is a stairwell structure of odd height $k$ for $S$.
\end{proof}

To illustrate that the procedure indicated in Figure \ref{fig:make simple} may indeed be needed, we offer an example in Figure \ref{fig:non simple} of a set $S$ in $G_\star$, where $G$ is a \emph{simple triod} with legs $T_1$, $T_2$, $T_3$; that is, $G$ is the union of three arcs $T_1$, $T_2$, $T_3$ which have one common endpoint and are otherwise pairwise disjoint.  In this case, $G_\star$ is a ``$3$-page book'', whose three ``pages'' are the squares drawn in Figure \ref{fig:non simple}.  The left edges of the three squares are identified.  We leave it to the reader to observe that this set $S$ irreducibly separates $G \times \{0\}$ from $G \times \{1\}$.  The reader may find it informative to remove the unwanted branch point using the procedure indicated in Figure \ref{fig:make simple} (note that there are two essentially different ways to do this), and then to nudge the set so that all the turning points have distinct projections, and add zig-zags as in Figure \ref{fig:make stairwell}, to obtain a set with a stairwell structure.

\begin{figure}
\begin{center}
\includegraphics{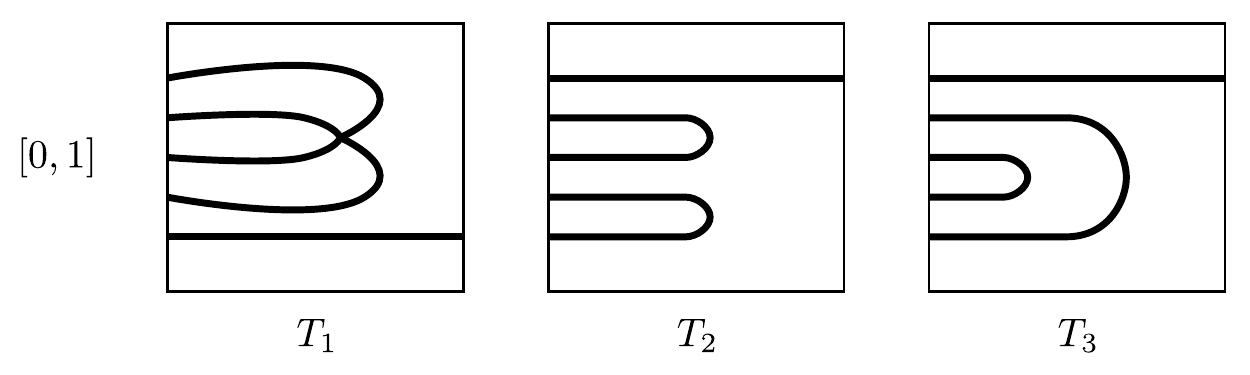}
\end{center}

\caption{An example of an irreducible separator in $G_\star$, for $G$ a simple triod, with an extra unwanted branch point.}
\label{fig:non simple}
\end{figure}

\section{Unfolding stairwells}
\label{sec:unfolding}

In this technical section, we develop the machinery we need to simplify a set with a stairwell structure by taking its inverse image under a simple fold.  As will be seen below, one can reduce the height of a stairwell by taking inverse images under a sequence of simple folds.  In the intermediate stages of this process, the resultant sets will not have a stairwell structure; however, they will exhibit a structure very close to it, which is captured by the next definition of a \emph{broken stairwell structure}.

A broken stairwell structure differs from a stairwell structure in that it contains an additional ``detour'' (which we call a \emph{pit}) at one of the levels.  We will observe in Proposition \ref{prop:reduce height} that a set with a stairwell structure of height $k$ can be relabelled so as to have a broken stairwell structure of height $k - 2$, with a pit at the first level.  We will then prove in Proposition \ref{prop:unfold} that given a broken stairwell structure, we can take the inverse image under a simple fold to obtain a new set with a broken stairwell structure of the same height in which the pit is at the next level up.  This procedure can be repeated to move the pit up to the highest level.  Then, once the pit is at the highest level, applying this procedure once more removes the pit altogether, leaving a set with a stairwell structure (not broken).  This will be carried out formally in the proof of Theorem \ref{thm:lift} in the next section.

\begin{defn}
\label{defn:broken stairwell}
Let $S \subset G_\star$.  A \emph{broken stairwell structure for $S$ of height $k$ with a pit at level $i_0$} is a tuple $\langle S_1,\ldots,S_k; P_1,P_2 \rangle$ such that:
\begin{enumerate}[label=\textbf{\boldmath (S\arabic{*}$'$)}]
\item \label{enum:straight'} $S_1,\ldots,S_k,P_1,P_2$ are non-empty straight subsets of $G_\star$ with $S = S_1 \cup \cdots \cup S_k \cup P_1 \cup P_2$;
\item \label{enum:ends decomp'} Property \ref{enum:ends decomp} above holds for $S_1,\ldots,S_k$, except that $\setends{S_{i_0}}$ is decomposed into three disjoint finite sets: $\setends{S_{i_0}} = \alpha_{i_0} \cup \beta_{i_0} \cup \gamma_{i_0}$.  Additionally, $\setends{P_2} = \setends{P_1} \cup \gamma_{i_0}$, and $\setends{P_1} \cap \gamma_{i_0} = \emptyset$;
\item \label{enum:same side'} Property \ref{enum:same side} above holds for $S_1,\ldots,S_k$, and additionally, there is a neighborhood $V$ of $\pi(\setends{P_1})$ such that $\pi(P_1) \cap V = \pi(P_2) \cap V$, and a neighborhood $W$ of $\pi(\gamma_{i_0})$ such that $\pi(P_2) \cap W = \pi(S_{i_0}) \cap W$;
\item \label{enum:consistent'} Property \ref{enum:consistent} above holds for $S_1,\ldots,S_k$, and additionally, $\pi(S_{i_0})$ has consistent complement relative to $\pi(\gamma_{i_0})$, and $\pi(P_2)$ has consistent complement relative to $\pi(\setends{P_1})$ and to $\pi(\gamma_{i_0})$;
\item \label{enum:generic'} The family $\langle \pi(\alpha_2),\ldots,\pi(\alpha_k),\pi(\setends{P_1}),\pi(\gamma_{i_0}) \rangle$ (which is equal to \\ $\langle \pi(\beta_1),\ldots,\pi(\beta_{k-1}),\pi(\setends{P_1}),\pi(\gamma_{i_0}) \rangle$) is generic in $G$; and
\item \label{enum:pit'} $\pi(\alpha_{i_0}) \cap \pi(P_1 \cup P_2) = \emptyset$.
\end{enumerate}
\end{defn}

See Figure \ref{fig:brokenstairwell} for a simple example of a set with a broken stairwell structure.

Note that even though the sets $S_1,\ldots,S_k,P_1,P_2$ are all non-empty, we do allow for the possibilities that $\alpha_i = \emptyset$ for some values of $i \in \{2,\ldots,k\}$, that $\setends{P_1} = \emptyset$, and that $\gamma_{i_0} = \emptyset$.  See also the remarks immediately following Proposition \ref{prop:reduce height} below.

\begin{figure}
\begin{center}
\includegraphics{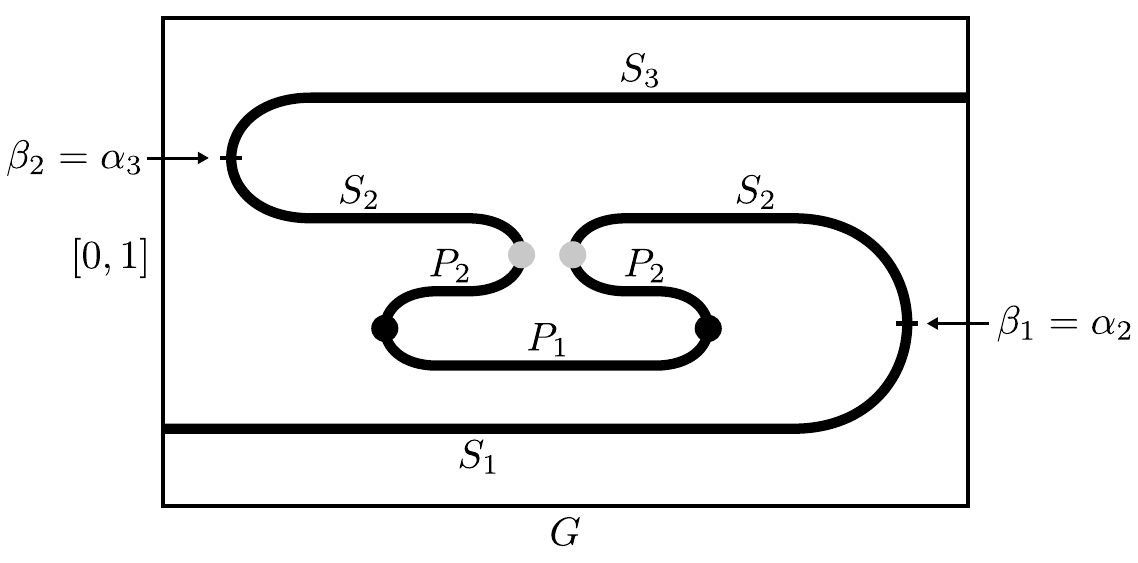}
\end{center}

\caption{An example of a set with a broken stairwell structure of height $3$ with a pit at level $2$ in $G_\star$, where $G$ is an arc.  The points marked with grey dots comprise the set $\gamma_2$, and the points marked with black dots comprise the set $\setends{P_1}$.}
\label{fig:brokenstairwell}
\end{figure}

We make the following observation: if $S \subset G_\star$ is a set with a broken stairwell structure with a pit at level $i_0$, and if $C$ is a component of $G$ such that $S_i \cap C_\star \neq \emptyset$ for each $i = 1,\ldots,k$ and $P_j \cap C_\star \neq \emptyset$ for $j = 1,2$, then $S \cap C_\star$ has a broken stairwell structure with a pit at level $i_0$ obtained by intersecting each of the sets $S_i,P_1,P_2,\alpha_i,\beta_i,$ and $\gamma_{i_0}$ with $C_\star$.

\begin{prop}
\label{prop:reduce height}
Every set $S \subset G_\star$ which has a stairwell structure of height $k$ has a broken stairwell structure of height $k-2$ with a pit at level $1$.
\end{prop}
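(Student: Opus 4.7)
The plan is purely combinatorial: no new subsets of $G_\star$ will be constructed. I will take the two bottom stairs $S_1,S_2$ of the given stairwell and relabel them as the pit pieces $P_1,P_2$, while shifting the remaining stairs $S_3,\ldots,S_k$ down by two indices to serve as the new stairwell $S_1',\ldots,S_{k-2}'$. Concretely, define
\[ P_1 = S_1, \qquad P_2 = S_2, \qquad S_j' = S_{j+2} \text{ for } j=1,\ldots,k-2, \]
set the pit level $i_0 = 1$, and assign the boundary markings
\[ \alpha_1' = \emptyset, \quad \beta_{k-2}' = \emptyset, \quad \alpha_j' = \alpha_{j+2} \text{ and } \beta_{j-1}' = \beta_{j+1} \text{ for } j = 2,\ldots,k-2, \]
\[ \gamma_{i_0} = \gamma_1 = \beta_2 = \alpha_3, \qquad \setends{P_1} = \beta_1 = \alpha_2. \]
The key identity making this work is that $\setends{P_2} = \setends{S_2} = \alpha_2 \cup \beta_2 = \setends{P_1} \cup \gamma_1$, and $\setends{P_1} \cap \gamma_1 = \alpha_2 \cap \beta_2 = \emptyset$ by \ref{enum:ends decomp} of the original stairwell.

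Having set up the labels, I then verify \ref{enum:straight'}--\ref{enum:pit'} by direct reference to the properties of the original stairwell structure. Property \ref{enum:straight'} is immediate from \ref{enum:straight}, as $S = \bigcup_i S_i = P_1 \cup P_2 \cup \bigcup_j S_j'$. Property \ref{enum:ends decomp'} is the computation just given together with the shifted version of \ref{enum:ends decomp}; note that $\setends{S_1'} = \setends{S_3} = \alpha_3 \cup \beta_3$ is accounted for as $\alpha_1' \cup \beta_1' \cup \gamma_1 = \emptyset \cup \beta_3 \cup \alpha_3$. Property \ref{enum:same side'} follows by applying \ref{enum:same side} of the original stairwell to the consecutive pairs $(S_1,S_2)$ and $(S_2,S_3)$, which supply the required neighborhoods $V$ and $W$, respectively, and to the pairs $(S_i,S_{i+1})$ for $i \geq 3$ for the shifted stairs. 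Property \ref{enum:consistent'} follows from \ref{enum:consistent} applied to each $S_i$: in particular $\pi(S_3) = \pi(S_1')$ has consistent complement relative to $\pi(\alpha_3) = \pi(\gamma_1)$, and $\pi(S_2) = \pi(P_2)$ has consistent complement relative to both $\pi(\alpha_2) = \pi(\setends{P_1})$ and $\pi(\beta_2) = \pi(\gamma_1)$.

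For \ref{enum:generic'}, observe that the family
\[ \langle \pi(\alpha_2'),\ldots,\pi(\alpha_{k-2}'),\pi(\setends{P_1}),\pi(\gamma_1) \rangle = \langle \pi(\alpha_4),\ldots,\pi(\alpha_k),\pi(\alpha_2),\pi(\alpha_3) \rangle \]
is merely a permutation of the generic family $\langle \pi(\alpha_2),\ldots,\pi(\alpha_k) \rangle$ given by \ref{enum:generic}, hence is itself generic. Finally \ref{enum:pit'} is trivial since $\alpha_{i_0} = \alpha_1' = \emptyset$. The result is implicitly stated for $k \geq 3$, which is the only case of interest since Theorem~\ref{thm:get stairwell} produces separators of odd height and the subsequent unfolding argument needs at least one stair to remain after excising the pit.

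The proof is essentially a bookkeeping exercise; I do not anticipate any real obstacle, only the need to keep the indices and the roles of $\alpha,\beta,\gamma,\setends{P_1}$ straight throughout.
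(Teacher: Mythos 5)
Your relabelling ($P_1=S_1$, $P_2=S_2$, $S_j'=S_{j+2}$, $\alpha_1'=\emptyset$, $\beta_1'=\beta_3$, $\gamma_1=\alpha_3$) is exactly the paper's construction, and your verification of \ref{enum:straight'}--\ref{enum:pit'} correctly fills in what the paper dismisses as "easy to verify." The proof is correct and takes essentially the same approach as the paper.
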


\begin{proof}
Suppose $S \subset G_\star$ has a stairwell structure $\langle S_1,\ldots,S_k \rangle$ of height $k$.  Let $P_1 = S_1$, $P_2 = S_2$, and for each $i = 1,\ldots,k-2$, let $S_i' = S_{i+2}$.  For each $i = 2,\ldots,k-2$, let $\alpha_i' = \alpha_{i+2}$ and $\beta_i' = \beta_{i+2}$.  Let $\alpha_1' = \emptyset$, $\beta_1' = \beta_3$, and $\gamma_1' = \alpha_3$.

It is now easy to verify that $\langle S_1',\ldots,S_{k-2}'; P_1,P_2 \rangle$ is a broken stairwell structure for $S$ of height $k-2$ with a pit at level $1$.
\end{proof}

We remark that though it may appear at a glance that we could equally well make the pit at level $k-2$ in the above proposition instead of at level $1$, property \ref{enum:pit'} prevents us from doing so in general.

If $\langle S_1,\ldots,S_k; P_1,P_2 \rangle$ is a broken stairwell structure for $S \subset G_\star$ of height $k$ with a pit at level $i_0$, and if $\gamma_{i_0} = \emptyset$, then in fact $\langle S_1,\ldots,S_k \rangle$ is a stairwell structure for $S' = S_1 \cup \cdots \cup S_k \subseteq S$.  Along the same lines, if $\setends{P_1} = \emptyset$ and $G$ is connected, then $\pi(P_1) = G$, and so $\langle P_1 \rangle$ is itself a stairwell structure of height $1$ for $P_1 \subset S$.  For these reasons, we will assume in Proposition \ref{prop:unfold} below that we start with a broken stairwell structure in which $\gamma_{i_0} \neq \emptyset$ and $\setends{P_1} \neq \emptyset$.

Our next major task is to prove Proposition \ref{prop:unfold}.  Because this is a crucial and delicate part at the heart of the results of this paper, we will treat all the details meticulously.  We begin with a lemma to break up and simplify the somewhat involved and tedious proof.

\begin{lem}
\label{lem:straight preimage}
Let $F = F_1 \cup F_2 \cup F_3$ be a simple fold on a graph $G$ with projection $\varphi: F \to G$, and let $S \subset G_\star$ be straight.  Suppose that either $\partial \pi(S) \cap \partial \varphi(F_2) = \emptyset$, or there is a neighborhood $V$ of $\partial \pi(S) \cap \partial \varphi(F_2)$ in $G$ such that $\varphi(F_2) \cap V \subseteq \pi(S) \cap V$.  Then
\begin{enumerate}
\item $S' = \varphi_\star^{-1}(S)$ is straight, and $\setends{S'} = \varphi_\star^{-1}(\setends{S}) \smallsetminus (\partial F_2)_\star$; and
\item $S'' = \varphi_\star^{-1}(S) \cap (F_1 \cup F_2)_\star$ is straight, and
\[ \setends{S''} = \Bigl( \bigl[ \varphi_\star^{-1}(\setends{S}) \cap (F_1 \cup F_2)_\star \bigr] \smallsetminus (\partial F_1)_\star \Bigr) \cup \Bigl( S'' \cap (\partial F_3)_\star \Bigr) .\]
\end{enumerate}
\end{lem}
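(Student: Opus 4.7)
The plan is to reduce both parts of the lemma to a direct analysis of $\varphi^{-1}(\pi(S)) \subset F$ and its boundary, exploiting the piecewise-homeomorphism structure of $\varphi$.

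For Part (1), closedness of $S' = \varphi_\star^{-1}(S)$ is immediate from continuity of $\varphi_\star$. Injectivity of $\pi$ on $S'$ follows because if $(x, t_1), (x, t_2) \in S'$ then $(\varphi(x), t_1), (\varphi(x), t_2) \in S$, and injectivity of $\pi$ on $S$ forces $t_1 = t_2$. For regularity of $\pi(S') = \varphi^{-1}(\pi(S))$, I would decompose this set as $\bigcup_{i=1}^3 (\varphi {\upharpoonright}_{F_i})^{-1}(\pi(S) \cap G_i)$; each summand is homeomorphic to the closed set $\pi(S) \cap G_i$, which has only finitely many components since both $\pi(S)$ and $G_i$ are regular. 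The critical point is that no degenerate (singleton) components arise at intersections of $\partial \pi(S)$ with $\partial G_i \subseteq \partial G_2$: the hypothesis either excludes these crossings entirely or forces $\pi(S)$ to cover a full neighborhood in $G_2$, so any such component extends across.

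The crux of Part (1) is identifying $\partial(\varphi^{-1}(\pi(S)))$. I would show pointwise that $\partial(\varphi^{-1}(\pi(S))) = \varphi^{-1}(\partial \pi(S)) \smallsetminus \partial F_2$. For $p \notin \partial F_2$, Lemma \ref{lem:fold basic}\ref{enum:open map} gives that $\varphi$ is a local homeomorphism at $p$, so $p$ is in the boundary of $\varphi^{-1}(\pi(S))$ if and only if $\varphi(p)$ is in the boundary of $\pi(S)$. For $p \in \partial F_2 = \partial F_1 \cup \partial F_3$, a small neighborhood of $p$ in $F$ is swept out by $\varphi$ via $F_2$ and via $F_1$ (or $F_3$) onto a neighborhood of $\varphi(p) \in \partial G_2$ inside $G_2 \cup G_1$ (or $G_2 \cup G_3$). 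Here the hypothesis intervenes: if $\varphi(p) \in \partial \pi(S)$, it guarantees that $\pi(S)$ contains all of $G_2$ near $\varphi(p)$, hence $\varphi^{-1}(\pi(S))$ contains a full neighborhood of $p$ in $F$, placing $p$ in the interior rather than the boundary. Intersecting with $S'$ and using $\varphi_\star^{-1}(\pi^{-1}(A)) = \pi^{-1}(\varphi^{-1}(A))$ delivers the stated end-set formula.

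For Part (2), I would apply Part (1) and then compute $\partial \pi(S'')$, where $\pi(S'') = \varphi^{-1}(\pi(S)) \cap (F_1 \cup F_2)$ is regarded as a subset of $F$. A point $p \in \pi(S'')$ lies in this boundary if and only if either $p \notin \partial F_3$ and $p \in \partial \pi(S')$, or $p \in \partial F_3 \cap \pi(S'')$, since every neighborhood of such a $p$ meets $F_3 \smallsetminus F_2 \subseteq F \smallsetminus (F_1 \cup F_2)$. Combining this with Part (1) and noting that $\partial F_1 \cap \partial F_3 = \emptyset$ by Lemma \ref{lem:fold basic}\ref{enum:fold boundary 1}, the two contributions assemble into the stated expression, using the fact that $\varphi_\star^{-1}(\setends{S}) \cap (F_1 \cup F_2)_\star \cap (\partial F_3)_\star \subseteq S''$ so points removed via $(\partial F_3)_\star$ in the analogue of Part (1) are recovered in the second summand $S'' \cap (\partial F_3)_\star$. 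The main obstacle I anticipate is the boundary analysis in Part (1) at crease points $p \in \partial F_2$: it is essential to handle the disjunction in the hypothesis carefully, showing in both cases (empty intersection and full local coverage of $G_2$) that no spurious boundary points are introduced by the fold, as this is the only place where the hypothesis is truly needed.
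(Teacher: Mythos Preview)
Your proposal is correct and follows essentially the same approach as the paper's proof. The only organizational difference is that the paper begins by isolating a single claim---that for any $x \in \varphi_\star^{-1}(S) \cap (\partial F_2)_\star$ there is a neighborhood $W$ of $\pi(x)$ with $\varphi(W) \subset \pi(S)$---and then invokes it repeatedly, whereas you fold this observation directly into your pointwise boundary computation $\partial(\varphi^{-1}(\pi(S))) = \varphi^{-1}(\partial \pi(S)) \smallsetminus \partial F_2$; the underlying idea (the hypothesis forces $\pi(S)$ to cover $G_2$ locally near any crease point in $\partial\pi(S)$, so no end-set points appear over $\partial F_2$) is identical in both.
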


Refer to Figure \ref{fig:straight preimage} for an illustration of the situation described in Lemma \ref{lem:straight preimage}.

\begin{figure}
\begin{center}
\includegraphics{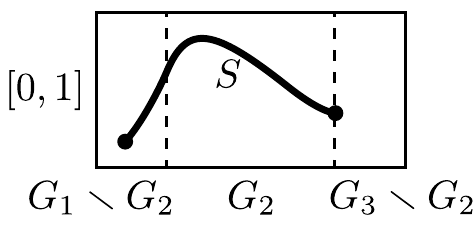} \\
\vspace{0.1in}
\includegraphics{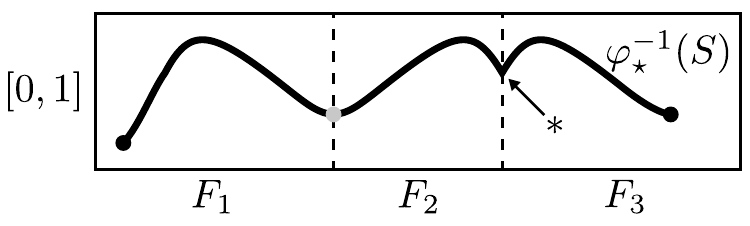}
\end{center}

\caption{On top, a straight set $S$ in $G_\star$ with end set $\setends{S}$ marked with black dots.  Underneath, the preimage of $S$ under the map $\varphi_\star$, with the set $\varphi_\star^{-1}(\setends{S})$ marked with dots, and only those points in $\setends{\varphi_\star^{-1}(S)}$ are in black.  The point marked with $\ast$ belongs to the end set of $\varphi_\star^{-1}(S) \cap (F_1 \cup F_2)_\star$, even though it does not belong to $\varphi_\star^{-1}(\setends{S})$.}
\label{fig:straight preimage}
\end{figure}

\begin{proof}
First, we claim that if $x \in \varphi_\star^{-1}(S) \cap (\partial F_2)_\star$, then there is a neighborhood $W$ of $\pi(x)$ such that $\varphi(W) \subset \pi(S)$.  To see this, we may assume $x \in \varphi_\star^{-1}(S) \cap (\partial F_1)_\star$.  By hypothesis, there is a neighborhood $V$ of $\varphi(\pi(x))$ such that $\varphi(F_2) \cap V \subseteq \pi(S) \cap V$.  We may assume $V$ is small enough so that if we let $W = \varphi^{-1}(V) \cap (F_1 \cup F_2)$, then $W$ is a neighborhood of $\pi(x)$ and $\varphi(F_1 \cap W) = \varphi(F_2 \cap W) = \varphi(F_2) \cap V$.  It follows that $\varphi(W) \subset \pi(S)$.  The argument is similar for $x \in \varphi_\star^{-1}(S) \cap (\partial F_3)_\star$.

For (1), note that clearly $S'$ is closed and $\pi$ is one-to-one on $S'$, since $S$ is closed and $\pi$ is one-to-one on $S$.  For $x \in S' \smallsetminus (\partial F_2)_\star$, $\varphi$ is one-to-one in a neighborhood of $\pi(x)$, and so the component of $x$ in $S'$ is non-degenerate since the component of $\varphi_\star(x)$ in $S$ is non-degenerate.  For $x \in S' \cap (\partial F_2)_\star$, the component of $x$ in $S'$ is non-degenerate by the above claim.  Thus $S'$ is straight.

It is straightforward to see that for $x \notin (\partial F_2)_\star$, we have $x \in \setends{S'}$ if and only if $\varphi_\star(x) \in \setends{S}$, since $\varphi$ is one-to-one on a neighborhood of $\pi(x)$.  Moreover, by the above claim, clearly $\setends{S'} \cap (\partial F_2)_\star = \emptyset$.  This establishes (1).

For (2), it can be argued similarly that $S''$ is a straight.  As for the end set of $S''$, clearly $\setends{S''} \subset (F_1 \cap F_2)_\star$ since $S'' \subset (F_1 \cap F_2)_\star$.  As in (1), it is straightforward to see that for $x \in S'' \smallsetminus (\partial F_2)_\star$, we have $x \in \setends{S''}$ if and only if $\varphi_\star(x) \in \setends{S}$, and by the claim, $\setends{S''} \cap (\partial F_1)_\star = \emptyset$.  Finally, if $x \in S'' \cap (\partial F_3)_\star$, then clearly any neighborhood of $\pi(x)$ meets both $\pi(S'')$ and the complement of $\pi(S'')$ (since it meets the interior of $F_3$), therefore $x \in \setends{S''}$.  This establishes (2).
\end{proof}

\begin{prop}
\label{prop:unfold}
Let $G$ be a connected graph, and let $S \subset G_\star$ have a broken stairwell structure $\langle S_1,\ldots,S_k; P_1,P_2 \rangle$ of height $k$ with a pit at level $i_0 \leq k$, in which $\gamma_{i_0} \neq \emptyset$ and $\setends{P_1} \neq \emptyset$.  Then there exists a simple fold $\varphi: F \to G$ such that $F$ is connected, and $\varphi_\star^{-1}(S)$ contains a set $S'$ with a broken stairwell structure of height $k$ with a pit at level $i_0 + 1$ if $i_0 < k$, or simply a stairwell structure of height $k$ if $i_0 = k$.
\end{prop}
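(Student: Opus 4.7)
The plan is to construct a simple fold $\varphi: F \to G$ whose middle region $G_2$ coincides with the pit's projection $\pi(P_2)$, and then extract a subset $S' \subseteq \varphi_\star^{-1}(S)$ of $F_\star$ with the desired structure.

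First, I set up the fold. Let $A = \pi(P_2)$, $B_1 = \pi(\gamma_{i_0})$, and $B_2 = \pi(\setends{P_1})$. Because $\pi$ is one-to-one on $P_2$ and $\setends{P_2} = \setends{P_1} \cup \gamma_{i_0}$ is a disjoint union by \ref{enum:ends decomp'}, we have $\partial A = B_1 \cup B_2$ with $B_1 \cap B_2 = \emptyset$. Condition \ref{enum:consistent'} supplies exactly the consistent-complement hypothesis required by Proposition \ref{prop:define fold}, producing a simple fold with $G_1 = \sigma_{B_1}(A)$, $G_2 = A$, and $G_3 = \sigma_{B_2}(A)$. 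Since $B_1$ and $B_2$ are nonempty by hypothesis and $G$ is connected, Proposition \ref{prop:conn fold} lets me pass to a connected component of $F$ on which $\varphi$ still surjects onto $G$; I take this as the final $F$.

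Next I assemble $S'$ piece by piece from $\varphi_\star^{-1}(S)$. Over $\mathrm{int}(G_2)$ every straight piece of $S$ has three preimages (one in each $F_i$), while elsewhere preimages are unique. The key geometric observation is that a preimage of $P_2$ lying in $F_1$ glues seamlessly onto the preimage of $S_{i_0}$ across the crease $\partial F_1$, because $\pi(P_2) = \pi(S_{i_0})$ in a neighborhood of $\pi(\gamma_{i_0})$ by \ref{enum:same side'}; this combined straight set becomes $S'_{i_0}$ and spans the former pit region without a detour. When $i_0 < k$, a copy of $P_1$ in $F_3$ joins a preimage of $S_{i_0+1}$ (using the adjacency $\beta_{i_0} = \alpha_{i_0+1}$) to form $S'_{i_0+1}$, and the remaining copies of $P_1, P_2$ in $F_2$ and residually in $F_3$ are repackaged into a new pit $P'_1, P'_2$ at level $i_0+1$. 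For $j \notin \{i_0,i_0+1\}$, I let $S'_j$ be the restriction of $\varphi_\star^{-1}(S_j)$ to the fold piece in which $S_j$ originally sat; straightness is secured by Lemma \ref{lem:straight preimage}. In the boundary case $i_0 = k$, no new pit is required: the leftover copies merge directly into $S'_k$, producing an ordinary (non-broken) stairwell of height $k$.

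It then remains to verify properties \ref{enum:straight'}--\ref{enum:pit'} for $\langle S'_1,\ldots,S'_k; P'_1,P'_2\rangle$. Straightness of each new piece comes from Lemma \ref{lem:straight preimage}; the end-set decomposition \ref{enum:ends decomp'} and the same-side condition \ref{enum:same side'} descend from the corresponding facts about $S$ because $\varphi$ is a local homeomorphism off the creases; genericity \ref{enum:generic'} is immediate since $B_1, B_2$ were generic in $G$ to begin with; and \ref{enum:pit'} holds because $\pi(\alpha_{i_0+1})$ sits outside $G_2$ while the new pit is built from copies lying inside $G_2$. The principal obstacle will be verifying the consistent-complement condition \ref{enum:consistent'} for the assembled sets: each component of the complement of $\pi(S'_j)$ in $G$ (transferred through $\varphi$) must correspond correctly to a component of the complement of the appropriate old set, which requires a careful trace through the fold, appealing to Proposition \ref{prop:side eq} to reconcile the two sides at each crease $\partial F_1, \partial F_3$. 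The boundary case $i_0=k$ also demands slightly separate attention, since the assembly of $S'_k$ differs from the generic step, but follows the same template.
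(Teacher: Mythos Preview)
Your fold is set up correctly (your $G_1,G_3$ are the paper's $G_3,G_1$, which is harmless), but the assembly of $S'$ is where the argument breaks down. The new level-$i_0$ piece must stitch together \emph{three} preimages, one in each sheet: in the paper's labeling, $S'_{i_0} = \bigl[\varphi_\star^{-1}(P_1)\cap(F_1)_\star\bigr]\cup\bigl[\varphi_\star^{-1}(P_2)\cap(F_2)_\star\bigr]\cup\bigl[\varphi_\star^{-1}(S_{i_0})\cap(F_3)_\star\bigr]$, using the adjacencies $P_1\!\sim\!P_2$ at $\setends{P_1}$ and $P_2\!\sim\!S_{i_0}$ at $\gamma_{i_0}$. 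Your two-piece gluing of ``$P_2$ in $F_1$'' with $S_{i_0}$ leaves a sheet uncovered and does not produce a straight set whose end set is $\alpha'_{i_0}\cup\beta'_{i_0}$. More seriously, your $S'_{i_0+1}$ glues $P_1$ with $S_{i_0+1}$, but there is no adjacency between these pieces in the broken stairwell (nothing makes $\pi(P_1)$ and $\pi(S_{i_0+1})$ agree near any crease); the correct choice is simply $S'_{i_0+1}=\varphi_\star^{-1}(S_{i_0+1})$. Likewise, for $j\notin\{i_0,i_0+1\}$ one takes the \emph{full} preimage $\varphi_\star^{-1}(S_j)$, not a restriction to ``the fold piece in which $S_j$ originally sat'' (which is not well-defined, since $S_j\subset G_\star$). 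Finally, the new pit is built from preimages of $P_2$ and $S_{i_0}$ over $F_1\cup F_2$ (paper's labeling), not from leftover copies of $P_1,P_2$.

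Your justification of \ref{enum:pit'} is also incorrect: $\pi(\alpha_{i_0+1})=\pi(\beta_{i_0})$ can certainly meet $G_2=\pi(P_2)$. The property holds instead because one \emph{defines} $\alpha'_{i_0+1}$ as the part of $\varphi_\star^{-1}(\alpha_{i_0+1})$ lying in $(F_3)_\star$ (the remainder over $F_1\cup F_2$ becomes $\gamma'_{i_0+1}$), while $P'_1,P'_2\subset(F_1\cup F_2)_\star$; disjointness is then automatic. In short, the fold is right, but the bookkeeping of which preimage goes where is the entire content of the proof, and the assignments you describe do not yield a (broken) stairwell structure.
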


\begin{proof}
Recall from the comment immediately following Lemma \ref{lem:fold basic} that to uniquely define a simple fold, it suffices to choose three subsets $G_1,G_2,G_3$ of $G$ satisfying properties \ref{enum:regular}, \ref{enum:G1 G3}, and \ref{enum:separate}.  We will define a simple fold in this way, relying on Proposition \ref{prop:define fold} to verify these properties.

Define the simple fold $F = F_1 \cup F_2 \cup F_3$ by $F_1 \approx G_1 = \pi(P_1)$, $F_2 \approx G_2 = \pi(P_2)$, and $F_3 \approx G_3 = \sigma_{\pi(\gamma_{i_0})}(\pi(S_{i_0}))$, and let $\varphi: F \to G$ be the projection.  Hence, $F$ is the union of $F_1$, $F_2$ and $F_3$ with $F_1$ glued to $F_2$ along the part corresponding to  $\setends{P_1}$ and $F_2$ is glued to $F_3$ along the part corresponding to $\pi(\gamma_{i_0})$. Note that since $G$ is connected, we have by the remarks following Definition \ref{defn:cons comp} and by Proposition \ref{prop:side eq} and \ref{enum:same side'} for $S$ that $\pi(P_1) = \sigma_{\pi(\setends{P_1})}(\pi(P_2))$ and $\sigma_{\pi(\gamma_{i_0})}(\pi(S_{i_0})) = \sigma_{\pi(\gamma_{i_0})}(\pi(P_2))$.  So by \ref{enum:ends decomp'} and Proposition \ref{prop:define fold}, these three sets $G_1,G_2,G_3$ do indeed define a simple fold.

We record the following basic observations for reference below:
\begin{enumerate}[resume, label=(\thethm.\arabic{*})\hspace{0.1in}, ref=(\thethm.\arabic{*})]
\item \label{obs:1} $\partial \varphi(F_1) = \varphi(\partial F_1) = \pi(\setends{P_1})$
\item \label{obs:2} $\partial \varphi(F_3) = \varphi(\partial F_3) = \pi(\gamma_{i_0})$
\item \label{obs:3} $\partial \varphi(F_2) = \partial \varphi(F_1) \cup \partial \varphi(F_3) = \pi(\setends{P_1}) \cup \pi(\gamma_{i_0})$
\end{enumerate}

We now describe the set $S' \subseteq \varphi_\star^{-1}(S)$ and its (broken) stairwell structure piece by piece.  The reader will find it helpful to refer to Figure \ref{fig:unfold} when reading the following definitions.

For each $i \notin \{i_0,i_0+1\}$, define
\begin{align*}
S_i' &= \varphi_\star^{-1}(S_i) \\
\alpha_i' &= \varphi_\star^{-1}(\alpha_i) \\
\beta_i' &= \varphi_\star^{-1}(\beta_i) .
\intertext{\indent For level $i_0$, define}
S_{i_0}' &= \bigl[ \varphi_\star^{-1}(P_1) \cap (F_1)_\star \bigr] \cup \bigl[ \varphi_\star^{-1}(P_2) \cap (F_2)_\star \bigr] \cup \bigl[ \varphi_\star^{-1}(S_{i_0}) \cap (F_3)_\star \bigr] \\
\alpha_{i_0}' &= \varphi_\star^{-1}(\alpha_{i_0}) \\
\beta_{i_0}' &= \varphi_\star^{-1}(\beta_{i_0}) \cap (F_3)_\star .
\intertext{\indent If $i_0 < k$, then further define}
S_{i_0+1}' &= \varphi_\star^{-1}(S_{i_0+1}) \\
\alpha_{i_0+1}' &= \varphi_\star^{-1}(\alpha_{i_0+1}) \cap (F_3)_\star \\
\beta_{i_0+1}' &= \varphi_\star^{-1}(\beta_{i_0+1}) \\
\gamma_{i_0+1}' &= \varphi_\star^{-1}(\alpha_{i_0+1}) \cap (F_1 \cup F_2)_\star ,
\intertext{as well as}
P_1' &= \varphi_\star^{-1}(P_2) \cap (F_1 \cup F_2)_\star \\
P_2' &= \varphi_\star^{-1}(S_{i_0}) \cap (F_1 \cup F_2)_\star .
\end{align*}

\begin{figure}
\begin{center}
\includegraphics{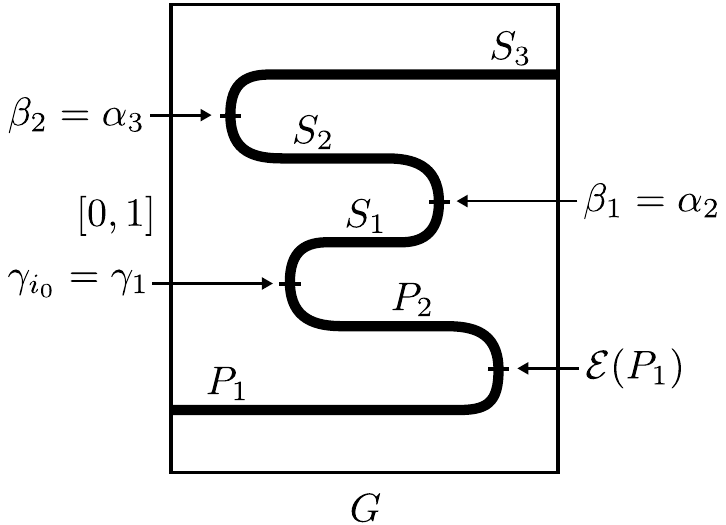} \\
\vspace{0.1in}
\includegraphics{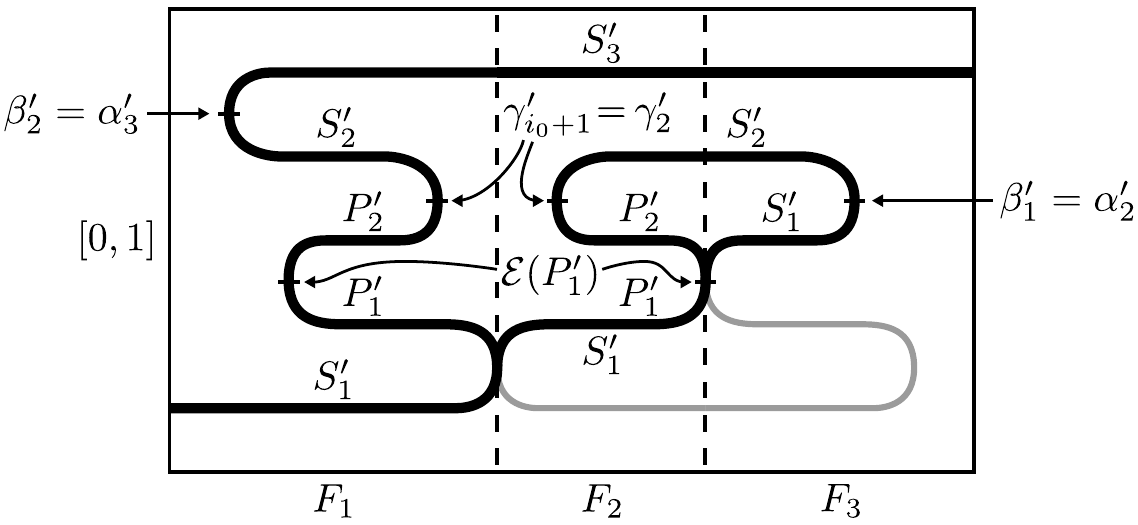}
\end{center}

\caption{On top, a set $S$ with a broken stairwell structure of height $3$ with a pit at level $1$.  Underneath, the preimage of $S$ under the map $\varphi_\star$, with the subset $S'$ with a broken stairwell structure of height $3$ with a pit at level $2$ in black.  Note that $S_1'$ and $P_1'$ overlap in a segment in $(F_2)_\star$.}
\label{fig:unfold}
\end{figure}

We now proceed with confirming that the above sets comprise a (broken) stairwell structure.  We begin by showing that the sets $S_1',\ldots,S_k',P_1',P_2'$ are all straight, and computing their end sets.

\medskip \noindent
\textbf{Straightness and end sets.}
\nopagebreak

For $i \neq i_0$, we have by \ref{enum:ends decomp'} and \ref{enum:generic'} for $S$ that $\partial \pi(S_i) = \pi(\alpha_i) \cup \pi(\beta_i)$ is disjoint from $\partial \varphi(F_2) = \pi(\setends{P_1}) \cup \pi(\gamma_{i_0})$ (by \ref{obs:3}).  Therefore, by Lemma \ref{lem:straight preimage}, $S_i' = \varphi_\star^{-1}(S_i)$ is straight, and
\begin{enumerate}[resume, label=(\thethm.\arabic{*})\hspace{0.1in}, ref=(\thethm.\arabic{*})]
\item \label{obs:4} $\setends{S_i'} = \varphi_\star^{-1}(\setends{S_i})$ for $i \neq i_0$.
\end{enumerate}
Observe that $\varphi_\star^{-1}(\setends{S_i})$ is disjoint from $(\partial F_2)_\star$.

We now consider $S_{i_0}'$.  Because $\varphi(F_1) = \pi(P_1)$, $\varphi(F_2) = \pi(P_2)$, and $\varphi(F_3) = \sigma_{\pi(\gamma_{i_0})}(\pi(S_{i_0})) \supseteq \pi(S_{i_0})$, clearly each of $\varphi_\star^{-1}(P_1) \cap (F_1)_\star$, $\varphi_\star^{-1}(P_2) \cap (F_2)_\star$, and $\varphi_\star^{-1}(S_{i_0}) \cap (F_3)_\star$ is straight, as $P_1$, $P_2$, and $S_{i_0}$ are straight.  From the equalities
\[ \varphi_\star^{-1}(P_1) \cap (\partial F_1)_\star = \varphi_\star^{-1}(\setends{P_1}) \cap (\partial F_1)_\star = \varphi_\star^{-1}(P_2) \cap (\partial F_1)_\star \]
and
\[ \varphi_\star^{-1}(P_2) \cap (\partial F_3)_\star = \varphi_\star^{-1}(\gamma_{i_0}) \cap (\partial F_3)_\star = \varphi_\star^{-1}(S_{i_0}) \cap (\partial F_3)_\star \]
it follows that $\pi$ is one-to-one on $S_{i_0}'$.  Thus $S_{i_0}'$ is straight.

For the end set of $S_{i_0}'$, observe that since $\pi(S_{i_0}') \supset F_1 \cup F_2$, $\setends{S_{i_0}'} \subset (F_3)_\star$.  Moreover, $\setends{S_{i_0}'} \cap (\partial F_3)_\star = \emptyset$, because $\pi(S_{i_0})$ agrees with $\varphi(F_3) = \sigma_{\pi(\gamma_{i_0})}(\pi(S_{i_0}))$ near $\pi(\gamma_{i_0})$ in $G$.  Thus
\begin{enumerate}[resume, label=(\thethm.\arabic{*})\hspace{0.1in}, ref=(\thethm.\arabic{*})]
\item \label{obs:5} $F_1 \cup F_2 \subset \intr(\pi(S_{i_0}'))$, and $\setends{S_{i_0}'} \subset \intr(F_3)_\star$.
\end{enumerate}
By the definition of $S_{i_0}'$, we have $S_{i_0}' \cap (F_3)_\star = \varphi^{-1}(S_{i_0}) \cap (F_3)_\star$, and it follows that
\begin{enumerate}[resume, label=(\thethm.\arabic{*})\hspace{0.1in}, ref=(\thethm.\arabic{*})]
\item \label{obs:6} $\setends{S_{i_0}'} = \varphi_\star^{-1}(\setends{S_{i_0}}) \cap \intr(F_3)_\star$.
\end{enumerate}

Looking at both cases ($i \neq i_0$ and $i = i_0$) above, we see that:
\begin{enumerate}[resume, label=(\thethm.\arabic{*})\hspace{0.1in}, ref=(\thethm.\arabic{*})]
\item \label{obs:7} $\setends{S_i'} \cap (\partial F_2)_\star = \emptyset$ for each $i = 1,\ldots,k$.
\end{enumerate}

Next, we consider $P_1' = \varphi_\star^{-1}(P_2) \cap (F_1 \cup F_2)_\star$.  Observe that $\varphi(F_2) = \pi(P_2)$, so Lemma \ref{lem:straight preimage} applies, and we conclude that $P_1'$ is straight.  For the end set of $P_1'$, we have by Lemma \ref{lem:straight preimage} that
\[ \setends{P_1'} = \Bigl( \bigl[ \varphi_\star^{-1}(\setends{P_2}) \cap (F_1 \cup F_2)_\star \bigr] \smallsetminus (\partial F_1)_\star \Bigr) \cup \Bigl( P_1' \cap (\partial F_3)_\star \Bigr) .\]
We simplify this expression using the following straightforward observations:
\begin{itemize}
\item $\varphi^{-1}(\pi(\setends{P_1})) \cap (F_1 \cup F_2) = \partial F_1$, so we can replace $\setends{P_2} = \setends{P_1} \cup \gamma_{i_0}$ by $\gamma_{i_0}$ in the above expression;
\item $\varphi^{-1}(\pi(\gamma_{i_0})) \subset F_1 \cup F_2$ and $\varphi^{-1}(\pi(\gamma_{i_0})) \cap \partial F_1 = \emptyset$ (by \ref{obs:1} and \ref{enum:generic'} for $S$), so $\bigl[ \varphi_\star^{-1}(\gamma_{i_0}) \cap (F_1 \cup F_2)_\star \bigr] \smallsetminus (\partial F_1)_\star = \varphi_\star^{-1}(\gamma_{i_0})$; and
\item $P_1' \cap (\partial F_3)_\star = \varphi_\star^{-1}(\gamma_{i_0})$ by \ref{obs:2}, so $\varphi_\star^{-1}(\gamma_{i_0}) \cup \Bigl( P_1' \cap (\partial F_3)_\star \Bigr) = \varphi_\star^{-1}(\gamma_{i_0})$.
\end{itemize}
We thus have
\begin{enumerate}[resume, label=(\thethm.\arabic{*})\hspace{0.1in}, ref=(\thethm.\arabic{*})]
\item \label{obs:8} $\setends{P_1'} = \varphi_\star^{-1}(\gamma_{i_0})$, which is contained in $(F_1 \cup F_2)_\star$.
\end{enumerate}

Lastly, we consider $P_2' = \varphi_\star^{-1}(S_{i_0}) \cap (F_1 \cup F_2)_\star$.  By \ref{enum:generic'} for $S$, we have that $\partial \varphi(F_1) \cap \partial \pi(S_{i_0}) = \pi(\setends{P_1}) \cap \partial \pi(S_{i_0}) = \emptyset$, which means by \ref{obs:3} that $\partial \varphi(F_2) \cap \partial \pi(S_{i_0}) = \partial \varphi(F_3) = \pi(\gamma_{i_0})$.  By \ref{enum:same side'} for $S$, the sets $\pi(S_{i_0})$ and $\varphi(F_2) = \pi(P_2)$ agree in a neighborhood of $\pi(\gamma_{i_0})$, hence Lemma \ref{lem:straight preimage} applies, and we have that $P_2'$ is straight.

For the end set of $P_2'$, we have by Lemma \ref{lem:straight preimage} that
\[ \setends{P_2'} = \Bigl( \bigl[ \varphi_\star^{-1}(\setends{S_{i_0}}) \cap (F_1 \cup F_2)_\star \bigr] \smallsetminus (\partial F_1)_\star \Bigr) \cup \Bigl( P_2' \cap (\partial F_3)_\star \Bigr) .\]
We simplify this expression using the following straightforward observations:
\begin{itemize}
\item $\partial F_3 \subset \varphi^{-1}(\pi(\gamma_{i_0})) \subset F_1 \cup F_2$ and $\varphi^{-1}(\pi(\gamma_{i_0})) \cap \partial F_1 = \emptyset$ (as above), so since $\setends{S_{i_0}} = \alpha_{i_0} \cup \beta_{i_0} \cup \gamma_{i_0}$, we obtain
\[ \setends{P_2'} = \Bigl( \bigl[ \varphi_\star^{-1}(\alpha_{i_0} \cup \beta_{i_0}) \cap (F_1 \cup F_2)_\star \bigr] \smallsetminus (\partial F_1)_\star \Bigr) \cup \varphi_\star^{-1}(\gamma_{i_0}) ;\]
\item $\varphi^{-1}(\pi(\alpha_{i_0})) \cap (F_1 \cup F_2) = \emptyset$ by \ref{enum:pit'} for $S$, so we can replace $\alpha_{i_0} \cup \beta_{i_0}$ with $\beta_{i_0}$ in the above expression; and
\item $\varphi^{-1}(\pi(\beta_{i_0})) \cap \partial F_1 = \emptyset$, so
\begin{align*}
\bigl[ \varphi_\star^{-1}(\beta_{i_0}) \cap (F_1 \cup F_2)_\star \bigr] \smallsetminus (\partial F_1)_\star &= \varphi_\star^{-1}(\beta_{i_0}) \cap (F_1 \cup F_2)_\star \\
&= \varphi_\star^{-1}(\alpha_{i_0+1}) \cap (F_1 \cup F_2)_\star \\
&= \gamma_{i_0+1}' .
\end{align*}
\end{itemize}
We thus have by \ref{obs:8} that
\begin{enumerate}[resume, label=(\thethm.\arabic{*})\hspace{0.1in}, ref=(\thethm.\arabic{*})]
\item \label{obs:9} $\setends{P_2'} = \setends{P_1'} \cup \gamma_{i_0+1}'$.
\end{enumerate}

\bigskip
We now continue with the remaining properties to show that the above sets comprise a (broken) stairwell structure.

\medskip \noindent
\textbf{\ref{enum:ends decomp} / \ref{enum:ends decomp'}.}
\nopagebreak

For $i \notin \{i_0,i_0+1\}$, we have $\setends{S_i'} = \varphi_\star^{-1}(\alpha_i) \cup \varphi_\star^{-1}(\beta_i) = \alpha_i' \cup \beta_i'$ by \ref{enum:ends decomp'} for $S$, and clearly $\alpha_i' \cap \beta_i' = \emptyset$ since $\alpha_i \cap \beta_i = \emptyset$.  Similarly, if $i_0 < k$, then by \ref{obs:4},
\begin{align*}
\setends{S_{i_0+1}'} &= \varphi_\star^{-1}(\setends{S_{i_0+1}}) \\
&= \varphi_\star^{-1}(\alpha_{i_0+1}) \cup \varphi_\star^{-1}(\beta_{i_0+1}) \\
&= \bigl[ \varphi_\star^{-1}(\alpha_{i_0+1}) \cap (F_1 \cup F_2)_\star \bigr] \cup \bigl[ \varphi_\star^{-1}(\alpha_{i_0+1}) \cap (F_3)_\star \bigr] \cup \varphi_\star^{-1}(\beta_{i_0+1}) \\
&= \gamma_{i_0+1}' \cup \alpha_{i_0+1}' \cup \beta_{i_0+1}' .
\end{align*}
We claim that the sets $\alpha_{i_0+1}',\beta_{i_0+1}',\gamma_{i_0+1}'$ are pairwise disjoint.  Indeed, because $\alpha_{i_0+1} \cap \beta_{i_0+1} = \emptyset$, we immediately have from the definitions of the sets $\alpha_{i_0+1}',\beta_{i_0+1}',\gamma_{i_0+1}'$ that $\alpha_{i_0+1}' \cap \beta_{i_0+1}' = \emptyset = \beta_{i_0+1}' \cap \gamma_{i_0+1}'$.  Moreover, also from these definitions we see that $\alpha_{i_0+1}' \cap \gamma_{i_0+1}' \subseteq (F_3)_\star \cap (F_1 \cup F_2)_\star = (\partial F_3)_\star \subseteq (\partial F_2)_\star$.  But also $\alpha_{i_0+1}',\gamma_{i_0+1}' \subseteq \setends{S_{i_0+1}'}$, and $\setends{S_{i_0+1}'} \cap (\partial F_2)_\star = \emptyset$ by \ref{obs:7}.  Thus $\alpha_{i_0+1}' \cap \gamma_{i_0+1}' = \emptyset$.

For $S_{i_0}'$, we have by \ref{obs:6} and the fact that $\varphi_\star^{-1}(\gamma_{i_0}) \cap \intr(F_3)_\star = \emptyset$ that
\begin{align*}
\setends{S_{i_0}'} &= \varphi_\star^{-1}(\setends{S_{i_0}}) \cap \intr(F_3)_\star \\
&= \bigl[ \varphi_\star^{-1}(\alpha_{i_0}) \cap (F_3)_\star \bigr] \cup \bigl[ \varphi_\star^{-1}(\beta_{i_0}) \cap (F_3)_\star \bigr] .
\end{align*}

Moreover, by \ref{enum:pit'} for $S$ and since $\varphi(F_1 \cup F_2) = \pi(P_1 \cup P_2)$, we have that $\varphi_\star^{-1}(\alpha_{i_0}) \subset (F_3)_\star$, so that $\varphi_\star^{-1}(\alpha_{i_0}) \cap (F_3)_\star = \varphi^{-1}(\alpha_{i_0}) = \alpha_{i_0}'$.  Thus $\setends{S_{i_0}'} = \alpha_{i_0}' \cup \beta_{i_0}'$.  Again, clearly $\alpha_{i_0}' \cap \beta_{i_0}' = \emptyset$ since $\alpha_{i_0} \cap \beta_{i_0} = \emptyset$.

It is straightforward to see that $\beta_i' = \alpha_{i+1}'$ for each $i = 1,\ldots,k-1$, since $\beta_i = \alpha_{i+1}$ for each $i = 1,\ldots,k-1$ by \ref{enum:ends decomp'} for $S$.  The only standout case is when $i = i_0$ (if $i_0 < k$), and here $\beta_{i_0}' = \varphi_\star^{-1}(\beta_{i_0}) \cap (F_3)_\star = \varphi_\star^{-1}(\alpha_{i_0+1}) \cap (F_3)_\star = \alpha_{i_0+1}'$.  Obviously, $\alpha_1' = \emptyset = \beta_k'$ since $\alpha_1 = \emptyset = \beta_k$.

We have already deduced in \ref{obs:9} that $\setends{P_2'} = \setends{P_1'} \cup \gamma_{i_0+1}'$, and the sets $\setends{P_1'} = \varphi_\star^{-1}(\gamma_{i_0})$ and $\gamma_{i_0+1}' = \varphi_\star^{-1}(\alpha_{i_0+1}) \cap (F_1 \cup F_2)_\star$ are clearly disjoint since by \ref{enum:generic'} for $S$, $\gamma_{i_0} \cap \alpha_{i_0+1} = \emptyset$.

\medskip \noindent
\textbf{\ref{enum:same side} / \ref{enum:same side'}.}
\nopagebreak

Because $\pi(\setends{S_i')) \cap \partial F_2 = \emptyset$ for each $i = 1,\ldots,k$ (by \ref{obs:7}), we have that $\varphi$ is one-to-one in a neighborhood of each point of $\pi(\setends{S_i'})$.  It is then straightforward to see from property \ref{enum:same side'} for $S$ and from the definition of $S_i'$ that there is a neighborhood $V$ of $\pi(\beta_i') = \pi(\alpha_{i+1}')$ such that $\pi(S_i'} \cap V = \pi(S_{i+1}') \cap V$.  Again, the only standout case is when $i = i_0$, and here $\pi(\setends{S_{i_0}'}) \subset \intr(F_3)$, and $\pi(S_{i_0}') \cap F_3 = \varphi^{-1}(\pi(S_{i_0})) \cap F_3$, so the neighborhood of $\beta_{i_0} = \alpha_{i_0+1}$ in $G$ in which $\pi(S_{i_0})$ and $\pi(S_{i_0+1})$ agree pulls back under $(\varphi {\upharpoonright}_{F_3})^{-1}$ to a neighborhood of $\beta_{i_0}' = \alpha_{i_0+1}'$ in which $\pi(S_{i_0}')$ and $\pi(S_{i_0+1}')$ agree.

If $i_0 < k$, then by \ref{obs:7}, we in particular have that $\pi(\gamma_{i_0+1}') \cap \partial F_2 = \emptyset$, and so $\varphi$ is one-to-one in a neighborhood of each point of $\pi(\gamma_{i_0+1}')$.  Then as above we have that there is a neighborhood of $\pi(\gamma_{i_0+1}') \subset F_1 \cup F_2$ on which $\pi(S_{i_0+1}') = \varphi^{-1}(\pi(S_{i_0+1}))$ and $P_2' = \varphi^{-1}(\pi(S_{i_0})) \cap (F_1 \cup F_2)$ agree.

For $P_1'$ and $P_2'$, recall from \ref{obs:8} that $\setends{P_1'} = \varphi_\star^{-1}(\gamma_{i_0})$, which is contained in $(F_1 \cup F_2)_\star$.  Let $z \in \pi(\setends{P_1'})$.  Note that $z \notin \partial F_1$ since $\varphi(\partial F_1) = \pi(\setends{P_1})$ by \ref{obs:1}, and $\pi(\setends{P_1}) \cap \pi(\gamma_{i_0}) = \emptyset$ by \ref{enum:generic'} for $S$.  If $z \notin \partial F_3$, then $\varphi$ is one-to-one in a neighborhood of $z$, so as above there is a neighborhood of $z$ on which $\pi(P_1')$ and $\pi(P_2')$ agree.

If $z \in \partial F_3$, then by \ref{enum:generic'} for $S$, $\varphi(z)$ is not a branch point of $G$, so there is a neighborhood of $z$ in $F$ which is homeomorphic to an open arc $J$.  $J \smallsetminus \{z\}$ is the union of two open arcs $J_1$ and $J_2$, where $J_1 \subset \intr(F_1 \cup F_2)$ and $J_2 \subset \intr(F_3)$.  Since $P_1'$ and $P_2'$ are contained in $(F_1 \cup F_2)_\star$ and $z \in \pi(\setends{P_1'}) \subset \pi(\setends{P_2'})$ (by \ref{obs:9}), there is a neighborhood $W \subset J$ of $z$ in $F$ such that $W \cap (F_1 \cup F_2) \subset \pi(P_1') \cap \pi(P_2')$.  On the other hand, $W \cap \intr(F_3)$ is disjoint from $\pi(P_1')$ and from $\pi(P_2')$.  Thus $\pi(P_1') \cap W = \pi(P_2') \cap W$.

\medskip \noindent
\textbf{\ref{enum:consistent} / \ref{enum:consistent'}.}
\nopagebreak

Given $i \notin \{i_0,i_0+1\}$, let $C$ be a component of $F \smallsetminus \pi(S_i') = F \smallsetminus \varphi^{-1}(\pi(S_i))$.  Then $\varphi(C)$ is contained in a component of $G \smallsetminus \pi(S_i)$, hence $\overline{\varphi(C)}$ meets at most one of $\pi(\alpha_i)$ and $\pi(\beta_i)$.  It follows that $\partial C \subseteq \pi(\alpha_i') = \varphi^{-1}(\pi(\alpha_i))$ or $\partial C \subseteq \pi(\beta_i') = \varphi^{-1}(\pi(\beta_i))$.

For level $i_0$, let  $C$ be a component of $F \smallsetminus \pi(S_{i_0}')$.  By \ref{obs:5}, $F_1 \cup F_2 \subset \intr(\pi(S_{i_0}'))$, hence $\overline{C} \subset \intr(F_3)$.  Moreover, $C \subset F_3 \smallsetminus \varphi^{-1}(\pi(S_{i_0}))$ since $S_{i_0}' \cap (F_3)_\star = \varphi^{-1}(S_{i_0}) \cap (F_3)_\star$.  Thus again $\varphi(C)$ is contained in a component of $G \smallsetminus \pi(S_{i_0})$, hence $\overline{\varphi(C)}$ meets at most one of $\pi(\alpha_{i_0})$, $\pi(\beta_{i_0})$, and $\pi(\gamma_{i_0})$.  Note however that $\overline{\varphi(C)} \cap \pi(\gamma_{i_0}) = \emptyset$ since $\varphi^{-1}(\pi(\gamma_{i_0})) \cap F_3 = \partial F_3$ and $\overline{C} \cap \partial F_3 = \emptyset$.  It follows that $\overline{C}$ meets at most one of $\pi(\alpha_{i_0}') = \varphi^{-1}(\pi(\alpha_{i_0}))$ and $\pi(\beta_{i_0}') = \varphi^{-1}(\pi(\beta_{i_0})) \cap F_3$.

Now suppose that $i_0 < k$, and consider level $i_0+1$.  Let $C$ be a component of $F \smallsetminus \pi(S_{i_0+1}')$.  Since $S_{i_0+1}' = \varphi_\star^{-1}(S_{i_0+1})$ and $\beta_{i_0+1}' = \varphi_\star^{-1}(\beta_{i_0+1})$, we have as above that if $\partial C \cap \pi(\beta_{i_0+1}') \neq \emptyset$, then $\partial C \subset \pi(\beta_{i_0+1}')$.

Suppose, on the other hand, that $\partial C \cap \pi(\alpha_{i_0+1}') \neq \emptyset$ or $\partial C \cap \pi(\gamma_{i_0+1}') \neq \emptyset$.  Then $\partial \varphi(C) \cap \pi(\alpha_{i_0+1}) \neq \emptyset$.  It follows that $\varphi(C)$ is contained in a component $\tilde{C}$ of $G \smallsetminus \pi(S_{i_0+1})$ whose boundary is contained in $\pi(\alpha_{i_0+1})$.  By Proposition \ref{prop:side eq}, $\tilde{C}$ is also a component of $G \smallsetminus \pi(S_{i_0})$ whose boundary is contained in $\pi(\beta_{i_0})$, because $\pi(S_{i_0+1})$ and $\pi(S_{i_0})$ agree in a neighborhood of $\pi(\alpha_{i_0+1}) = \pi(\beta_{i_0})$, and $\pi(S_{i_0})$ has consistent complement relative to $\pi(\beta_{i_0})$.

Observe that $\varphi(\partial F_3) = \pi(\gamma_{i_0}) \subset \pi(S_{i_0})$ and $\tilde{C} \subseteq G \smallsetminus \pi(S_{i_0})$, so $C \cap \partial F_3 = \emptyset$.  Because $C$ is connected, by Lemma \ref{lem:fold basic}\ref{enum:inter sep} this means $C \subseteq F_1 \cup F_2$ or $C \subseteq F_3$.  Therefore, by the definitions of $\alpha_{i_0+1}'$ and $\gamma_{i_0+1}'$, either $\partial C \subset \pi(\alpha_{i_0+1}')$ or $\partial C \subset \pi(\gamma_{i_0+1}')$.

Now let $D$ be a component of $F \smallsetminus \pi(P_2')$.  Note that $\partial F_3 \subset \varphi^{-1}(\pi(\gamma_{i_0})) \cap (F_1 \cup F_2) \subset \varphi^{-1}(\pi(S_{i_0})) \cap (F_1 \cup F_2) = \pi(P_2')$, so $D \cap \partial F_3 = \emptyset$.  By Lemma \ref{lem:fold basic}\ref{enum:inter sep}, this means $D \subset F_1 \cup F_2$ or $D \subset F_3$.

If $D \subset F_3$, then since $F_3 \cap \pi(P_2') = \partial F_3$, we must have $\partial D \subset \partial F_3 \subset \pi(\setends{P_1'})$.  If $D \subset F_1 \cup F_2$, then since $\pi(P_2') = \varphi^{-1}(\pi(S_{i_0})) \cap (F_1 \cup F_2)$, we have that $\varphi(D)$ is contained in a component $\tilde{D}$ of $G \smallsetminus \pi(S_{i_0})$.  Moreover, $\varphi(D) \subset \pi(P_1 \cup P_2)$, so $\partial \tilde{D} \cap  \pi(\alpha_{i_0}) = \emptyset$ by \ref{enum:pit'} for $S$.  This means either $\partial \tilde{D} \subset \pi(\beta_{i_0})$ or $\partial \tilde{D} \subset \pi(\gamma_{i_0})$.  Then because $\setends{P_1'} = \varphi_\star^{-1}(\gamma_{i_0})$ and $\gamma_{i_0+1}' = \varphi_\star^{-1}(\alpha_{i_0+1}) \cap (F_1 \cup F_2)_\star = \varphi_\star^{-1}(\beta_{i_0}) \cap (F_1 \cup F_2)_\star$, it follows that $\partial D \subset \pi(\setends{P_1'})$ or $\partial D \subset \pi(\gamma_{i_0+1}')$.

\medskip \noindent
\textbf{\ref{enum:generic} / \ref{enum:generic'}.}
\nopagebreak

Since $\partial \pi(P_2)$ is disjoint from the set $Z$ of branch points and endpoints of $G$, we have that the set of branch points and endpoints of $F$ is $\varphi^{-1}(Z)$.  It is then trivial to see from the definitions of the sets $\alpha_2',\ldots,\alpha_k',\gamma_{i_0+1}',\setends{P_1'}$, and from property \ref{enum:generic'} for $S$, that the family $\langle \pi(\alpha_2'),\ldots,\pi(\alpha_k'),\pi(\gamma_{i_0+1}'),\pi(\setends{P_1'}) \rangle$ (or simply $\langle \pi(\alpha_2'),\ldots,\pi(\alpha_k') \rangle$ in the case $i_0 = k$) is generic.

\medskip \noindent
\textbf{\ref{enum:pit'}.}
\nopagebreak

Recall that $\alpha_{i_0+1}' = \beta_{i_0}' \subset \intr(F_3)_\star$ by \ref{obs:5}, which means that $\alpha_{i_0+1}' \cap (F_1 \cup F_2)_\star = \emptyset$.  Thus $\pi(\alpha_{i_0+1}') \cap \pi(P_1' \cup P_2') = \emptyset$, since $P_1'$ and $P_2'$ are contained in $(F_1 \cup F_2)_\star$.

\bigskip
This completes the proof of all the properties required to prove that $\langle S_1',\ldots,S_k'; P_1',P_2' \rangle$ is a broken stairwell structure for $S' = S_1' \cup \cdots \cup S_k' \cup P_1' \cup P_2'$ of height $k$ with a pit at level $i_0 + 1$, or, in the case that $i_0 = k$, that $\langle S_1',\ldots,S_k' \rangle$ is a stairwell structure for $S' = S_1' \cup \cdots \cup S_k'$.

Finally, to obtain a connected simple fold, we observe that since $\setends{P_1} \neq \emptyset$ and $\gamma_{i_0} \neq \emptyset$ (by assumption), and since $G$ is connected and $\pi(P_2)$ has consistent complement relative to $\pi(\setends{P_1})$ and to $\pi(\gamma_{i_0})$, there is a component $K$ of $\pi(P_2) = \varphi(F_2)$ such that $K$ meets both $\pi(\setends{P_1}) = \partial \varphi(F_1)$ and $\pi(\gamma_{i_0}) = \partial \varphi(F_3)$.

By Proposition \ref{prop:conn fold}, $(\varphi {\upharpoonright}_{F_2})^{-1}(K)$ is contained in a component $C$ of $F$ such that $\varphi(C) = G$, and $F' = F_1' \cup F_2' \cup F_3'$, where $F_i' = F_i \cap C$ for each $i = 1,2,3$, is a connected simple fold.  For $i \neq i_0$, since $\varphi(C) = G$ and $S_i' = \varphi_\star^{-1}(S_i)$, we have $S_i' \cap C_\star \neq \emptyset$.  Also, all three of $S_{i_0}'$, $P_1'$, and $P_2'$ contain $\varphi_\star^{-1}(\gamma_{i_0}) \cap (\partial F_3)_\star$, and clearly $C$ meets $\partial F_3$ by Lemma \ref{lem:fold basic}\ref{enum:inter sep}.  Thus $S_{i_0}' \cap C_\star$, $P_1' \cap C_\star$, and $P_2' \cap C_\star$ are all non-empty as well.  Therefore, by the remarks following Definitions \ref{defn:stairwell} and \ref{defn:broken stairwell}, the (broken) stairwell structure on $S' \subset F_\star$ yields a (broken) stairwell structure on $S' \cap C_\star$.
\end{proof}

\section{Applications}
\label{sec:applications}

We are now in a position to state and prove our main technical theorem.

\begin{thm}
\label{thm:lift}
A compactum $X$ is hereditarily indecomposable if and only if for any map $f: X \to G$ to a graph $G$, for any set $M \subseteq G \times (0,1)$ which separates $G \times \{0\}$ from $G \times \{1\}$ in $G \times [0,1]$, for any open set $U \subseteq G \times [0,1]$ with $M \subseteq U$, and for any $\varepsilon > 0$, there exists a map $h: X \to U$ such that $\dsup(f, \pi_1 \circ h) < \varepsilon$ (where $\pi_1: G \times [0,1] \to G$ is the first coordinate projection).
\end{thm}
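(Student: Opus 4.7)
My plan is to handle the two implications separately. The forward direction (hereditary indecomposability implies the lifting property) is the substantial content; it unifies the machinery from the previous two sections. The converse is a quick consequence of Theorem \ref{thm:factor fold}.

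For the forward direction, I may assume $G$ is connected (handling components separately). Given $f$, $M$, $U$, and $\varepsilon$, I invoke Theorem \ref{thm:get stairwell} to produce a set $S \subset U$ carrying a stairwell structure of some odd height $k$. The plan is to iteratively reduce $k$ down to $1$. In each reduction round, I apply Proposition \ref{prop:reduce height} to reinterpret the current stairwell of height $k'$ as a broken stairwell of height $k'-2$ with pit at level $1$, and then apply Proposition \ref{prop:unfold} a total of $k'-2$ times: each application furnishes a simple fold onto the current graph and produces, inside its star-product pullback, a broken stairwell of the same height with the pit advanced by one level; the final application (when the pit is at the top) absorbs the pit to leave a plain stairwell of height $k'-2$. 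Degenerate cases in which $\gamma_{i_0}$ or $\setends{P_1}$ becomes empty only speed up the reduction, as noted immediately before Proposition \ref{prop:unfold}. After at most $\tfrac{k-1}{2}$ such rounds, I arrive at a composition of simple folds $\Phi : F \to G$ and a subset $\tilde S \subseteq \Phi_\star^{-1}(S)$ with a stairwell structure of height $1$; together with the connectedness of $F$ (preserved by each application of Proposition \ref{prop:unfold}), this ensures that $\pi|_{\tilde S}$ is a homeomorphism of $\tilde S$ onto $F$.

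To produce the desired lift, I apply Theorem \ref{thm:factor fold} inductively along the chain of folds: starting from $f$, I factor through $\varphi_1$ to obtain $g_1 : X \to F_1$ with $\varphi_1 \circ g_1$ close to $f$, then factor $g_1$ through $\varphi_2$, and so on, yielding a final map $g : X \to F$ with $\dsup(f, \Phi \circ g) < \varepsilon$ (the error budgets at each stage are apportioned, using the uniform continuity of the already-fixed folds, so that the composed error stays under $\varepsilon$). Setting $h := \Phi_\star \circ (\pi|_{\tilde S})^{-1} \circ g$, the image lies in $\Phi_\star(\tilde S) \subseteq S \subseteq U$, and the identity $\pi_1 \circ \Phi_\star = \Phi \circ \pi$ gives $\pi_1 \circ h = \Phi \circ g$, which is within $\varepsilon$ of $f$. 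The step I expect to require the most care is the inductive bookkeeping: tracking the chain of pullbacks in the star-products, verifying that the hypotheses of Proposition \ref{prop:unfold} remain satisfied at each step, and choosing the error budgets through what may be a long chain of simple folds.

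For the converse, I appeal to the implication $(3) \Rightarrow (1)$ of Theorem \ref{thm:factor fold}. Given a simple fold $\varphi : F \to [0,1]$ with $F$ an arc and a map $f : X \to [0,1]$, I embed $F$ into $[0,1] \times (0,1)$ so that first-coordinate projection agrees with $\varphi$. Since $G_1 \cup G_3 = [0,1]$ with $G_2 = G_1 \cap G_3$, every point of $[0,1]$ has an odd number of $\varphi$-preimages, so the embedded copy $M$ of $F$ separates $[0,1] \times \{0\}$ from $[0,1] \times \{1\}$. Choosing $U$ to be a neighborhood of $M$ small enough that there is a retraction $r : U \to M$ whose first-coordinate displacement is under $\varepsilon/2$, the hypothesized lift $h : X \to U$ yields, via the identification $M \cong F$, a map $g : X \to F$ with $\dsup(f, \varphi \circ g) < \varepsilon$, which by Theorem \ref{thm:factor fold} forces $X$ to be hereditarily indecomposable.
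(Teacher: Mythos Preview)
Your proposal is correct and follows essentially the same approach as the paper's proof: both directions match the paper's argument in structure, in the lemmas invoked (Theorem~\ref{thm:get stairwell}, Proposition~\ref{prop:reduce height}, Proposition~\ref{prop:unfold}, and Theorem~\ref{thm:factor fold}), and in the construction of the lift $h$ via the inverse of $\pi|_{\tilde S}$ composed with the chain of $\varphi_\star$'s. The converse is likewise the same zig-zag embedding of the simple fold into $[0,1]\times(0,1)$ followed by a retraction argument.
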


\begin{proof}
Suppose that $X$ is a hereditarily indecomposable compactum.  Let $f: X \to G$ be a map to a graph $G$, let $M \subset G \times (0,1)$ separate $G \times \{0\}$ from $G \times \{1\}$ in $G \times [0,1]$, and let $U$ be a neighborhood of $M$ in $G \times [0,1]$.  By treating the components of $G$ one at a time, and because the inverse image of any component under $f$ is a hereditarily indecomposable closed and open subset of $X$, we may assume without loss of generality that $G$ is connected.

By Theorem \ref{thm:get stairwell}, there is a set $S \subset U$ with a stairwell structure of odd height $k_0$.  We claim that there is a finite sequence $G = F^0,F^1,\ldots,F^n$ of connected graphs such that for each $i = 1,\ldots,n$, $F^i$ is a simple fold on $F^{i-1}$ with projection $\varphi_i: F^i \to F^{i-1}$, and $((\varphi_n)_\star \circ \cdots \circ (\varphi_1)_\star)^{-1}(S)$ contains a set $S'$ with a stairwell structure of height $1$.  We construct this sequence by induction as follows.  Let $F^0 = G$.

\setcounter{step}{0}
\begin{step}
Assume we have a set $S \subset (F^j)_\star$ with a stairwell structure of height $k$.  If $k = 1$, then we are done.  Otherwise, by Proposition \ref{prop:reduce height}, $S$ has a broken stairwell structure of height $k-2$ with a pit at level $1$.
\end{step}

\begin{step}
Assume that $S \subset (F^j)_\star$, and that $\langle S_1,\ldots,S_{k-2},P_1,P_2 \rangle$ is a broken stairwell structure on $S$ of height $k-2$ with a pit at level $i_0$.  As per the remarks following Proposition \ref{prop:reduce height}, if $\gamma_{i_0} = \emptyset$, then in fact $S$ has a stairwell structure of height $k-2$, and we may return to Step 1 with this stairwell structure.  Similarly, if $\setends{P_1} = \emptyset$, then in fact $S' = P_1 \subseteq S$ itself has a stairwell structure of height $1$, and we are done.

Suppose now that $\gamma_{i_0} \neq \emptyset$ and $\setends{P_1} \neq \emptyset$.  If $i_0 < k-2$, then by Proposition \ref{prop:unfold}, there is a simple fold $\varphi_j: F^{j+1} \to F^j$, where $F^{j+1}$ is a connected graph, and a set $S' \subseteq \varphi_j^{-1}(S)$ with a broken stairwell structure of height $k-2$ with a pit at level $i_0+1$, and we may repeat Step 2 for $S' \subset (F^{j+1})_\star$.  If $i_0 = k-2$, then by Proposition \ref{prop:unfold}, there is a simple fold $\varphi_j: F^{j+1} \to F^j$, where $F^{j+1}$ is a connected graph, and a set $S' \subseteq \varphi_j^{-1}(S)$ with a stairwell structure of height $k-2$, and we may repeat the entire process starting at Step 1 for $S' \subset (F^{j+1})_\star$.
\end{step}

In this way, after a sequence of at most $(k_0-1) + (k_0-3) + \cdots + 1$ simple folds, we obtain the desired sequence $G = F^0,F^1,\ldots,F^n$ and desired set $S' \subset (F^n)_\star$.  Clearly the first coordinate projection $\pi_1: F^n \times [0,1] \to F^n$ carries $S'$ one-to-one onto $F^n$, so there is an inverse $\theta: F^n \to S'$.

Let $g_0 = f$.  By Theorem \ref{thm:factor fold}, for each $i = 1,\ldots,n$, there is a map $g_i: X \to F^i$ such that $\dsup(\varphi_i \circ g_i,g_{i-1}) < \varepsilon_i$, where the numbers $\varepsilon_i > 0$ are chosen small enough so that if we let $g = \varphi_1 \circ \cdots \circ \varphi_n \circ g_n$, then $\dsup(f,g) < \varepsilon$.

Define $h: X \to G_\star$ by $h = (\varphi_1)_\star \circ \cdots \circ (\varphi_n)_\star \circ \theta \circ g_n$.  We further assume the numbers $\varepsilon_i$ are chosen small enough so that $h(X) \subset U$.  Then $\pi_1 \circ h = g$, hence $\dsup(f, \pi_1 \circ h) < \varepsilon$.

\bigskip
For the converse, assume $X$ is compact and that the right side of the ``if and only if'' statement holds.  Let $f: X \to [0,1]$ be a map, and let $\varphi: F \to [0,1]$ be a simple fold such that $F$ is an arc.  Consider a ``zig-zag'' set $S \subset [0,1] \times (0,1)$ which is the union of three straight sets $S_1,S_2,S_3 \subset [0,1] \times (0,1)$ such that $\pi_1(S_i) = \varphi(F_i)$ for each $i = 1,2,3$, $S_1 \cap S_2 = \setends{S_1}$, $S_2 \cap S_3 = \setends{S_3}$, and $S_1 \cap S_3 = \emptyset$.  Clearly $S$ separates $[0,1] \times \{0\}$ from $[0,1] \times \{1\}$ in the square $[0,1] \times [0,1]$.  Note also that there is a homeomorphism $\rho: S \to F$ such that $\varphi \circ \rho = \pi_1$ on $S$.

Fix $\varepsilon > 0$, and let $U$ be a small neighborhood of $S$ in $[0,1] \times [0,1]$ for which there is a $\frac{\varepsilon}{2}$-retraction $r: U \to S$ -- in particular, so that on $U$ we have $\dsup(\pi_1 \circ r, \pi_1) < \frac{\varepsilon}{2}$.  By hypothesis, there is a map $h: X \to U$ such that $\dsup(f, \pi_1 \circ h) < \frac{\varepsilon}{2}$.  Let $g = \rho \circ r \circ h: X \to F$.  Observe that $\varphi \circ g = \pi_1 \circ r \circ h$.  Then we have $\dsup(\varphi \circ g, \pi_1 \circ h) = \dsup(\pi_1 \circ r \circ h, \pi_1 \circ h) < \frac{\varepsilon}{2}$ and $\dsup(f, \pi_1 \circ h) < \frac{\varepsilon}{2}$, hence $\dsup(f, \varphi \circ g) < \varepsilon$.

Therefore, by Theorem \ref{thm:factor fold}, $X$ is hereditarily indecomposable.
\end{proof}

We now recall and prove Theorem \ref{thm:hered indec span zero}, from which the classification of homogeneous plane continua (and compacta) follows as detailed in the Introduction above.

\setcounter{thm}{0}
\begin{thm}
A continuum $X$ is homeomorphic to the pseudo-arc if and only if $X$ is hereditarily indecomposable and has span zero.
\end{thm}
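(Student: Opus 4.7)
The forward direction is immediate: by Bing's characterization \cite{bing51} the pseudo-arc is hereditarily indecomposable and arc-like, and every arc-like continuum has span zero by \cite{lelek64}.

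For the converse, suppose $X$ is a non-degenerate hereditarily indecomposable continuum with span zero. By Bing's theorem \cite{bing51} again, it suffices to show $X$ is arc-like, i.e.\ for every $\varepsilon>0$ there is an $\varepsilon$-map from $X$ to an arc. The plan follows the outline at the start of Section \ref{sec:stairwells}, assembling the three principal inputs of the paper: tree-likeness of span-zero continua, Theorem \ref{thm:span separator}, and Theorem \ref{thm:lift}.

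Fix $\varepsilon>0$ and regard $X\subset[0,1]^{\mathbb{N}}$. Since $X$ has span zero it is tree-like \cite{lelek79}, so one can choose a tree $T\subset[0,1]^{\mathbb{N}}$ whose Hausdorff distance to $X$ is arbitrarily small and a map $f:X\to T$ with $\dsup(f,\id_X)$ arbitrarily small. Choose also an arc $I\subset[0,1]^{\mathbb{N}}$ of small Hausdorff distance to $X$, with endpoints $p,q$, and identify $I$ with $[0,1]$ via $p\mapsto 0$, $q\mapsto 1$. Apply Theorem \ref{thm:span separator} with constant $\varepsilon/6$ in place of $\varepsilon$; once $T$ and $I$ are close enough to $X$, the set
\[ M = \{(x,y)\in T\times(I\smallsetminus\{p,q\}): d(x,y)<\varepsilon/6\} \]
separates $T\times\{p\}$ from $T\times\{q\}$ in $T\times I$. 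Fix a neighborhood $U$ of $M$ in $T\times[0,1]$ so tight that $d(\pi_1(u),\pi_2(u))<\varepsilon/3$ for every $u\in U$. Now Theorem \ref{thm:lift}, applied to the hereditarily indecomposable $X$ with map $f:X\to T$, separator $M$, neighborhood $U$, and a small lifting tolerance $\eta$, yields a map $h=(h_1,h_2):X\to U$ with $\dsup(h_1,f)<\eta$.

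Set $h_2=\pi_2\circ h:X\to I$. The triangle inequality gives
\[ d(x,h_2(x)) \le d(x,f(x)) + d(f(x),h_1(x)) + d(h_1(x),h_2(x)) ,\]
and with the Hausdorff approximation of $X$ by $T$, the tolerance $\eta$, and the tightness of $U$ all chosen sufficiently small (each, say, below $\varepsilon/6$, $\varepsilon/6$, $\varepsilon/3$ respectively), the right-hand side is less than $\varepsilon/2$ for every $x\in X$. Consequently if $y\in I$ and $x_1,x_2\in h_2^{-1}(y)$, then $d(x_1,x_2)\le d(x_1,h_2(x_1))+d(h_2(x_2),x_2)<\varepsilon$, so $h_2$ is an $\varepsilon$-map from $X$ to the arc $I$. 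Since $\varepsilon>0$ was arbitrary, $X$ is arc-like, and Bing's characterization \cite{bing51} identifies $X$ with the pseudo-arc.

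The main obstacle is not in this final assembly, which is a routine estimate, but in setting up Theorem \ref{thm:lift}; that theorem is where hereditary indecomposability is genuinely used, and where the stairwell and unfolding machinery of Sections \ref{sec:stairwells}--\ref{sec:unfolding} does the heavy lifting. The only delicate point in the assembly itself is ensuring that the three independently chosen quantities (the Hausdorff approximation of $X$ by $T$ and $I$, the lifting error $\eta$, and the tightness of $U$ around $M$) can be fixed simultaneously so that the triangle inequality closes up to give $d(x,h_2(x))<\varepsilon/2$ for every $x$.
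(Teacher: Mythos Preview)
Your proof is correct and follows essentially the same route as the paper's: span zero gives tree-likeness \cite{lelek79}, Theorem \ref{thm:span separator} produces the separator $M$, Theorem \ref{thm:lift} lifts $f$ into (a neighborhood of) $M$, and a triangle inequality shows $\pi_2\circ h$ is an $\varepsilon$-map. Two small remarks: the paper simply takes $U=M$ since $M$ is already open, so your extra neighborhood $U$ is harmless but unnecessary; and your suggested constants $\varepsilon/6,\varepsilon/6,\varepsilon/3$ sum to $2\varepsilon/3$, not something below $\varepsilon/2$ --- using $\varepsilon/6$ for all three (as the paper does) fixes this trivial arithmetic slip.
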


\begin{proof}
The pseudo-arc is hereditarily indecomposable and arc-like, and all arc-like continua have span zero \cite{lelek64}, hence the pseudo-arc has span zero.

For the converse, let $X$ be a hereditarily indecomposable continuum in the Hilbert cube $[0,1]^{\mathbb{N}}$ with span zero, and fix $\varepsilon > 0$.  We will show there is an $\varepsilon$-map from $X$ to an arc.

By Theorem \ref{thm:span separator}, there exists $\delta > 0$ small enough so that if $G \subset [0,1]^{\mathbb{N}}$ is a graph and $I \subset [0,1]^{\mathbb{N}}$ is an arc with endpoints $p$ and $q$, such that the Hausdorff distance from $X$ to each of $G$ and $I$ is less than $\delta$, then the set $M = \{(x,y) \in G \times (I \smallsetminus \{p,q\}): d(x,y) < \frac{\varepsilon}{6}\}$ separates $G \times \{p\}$ from $G \times \{q\}$ in $G \times I$.  We may assume $\delta \leq \frac{\varepsilon}{6}$.

Let $I \subset [0,1]^{\mathbb{N}}$ be an arc with endpoints $p$ and $q$ such that $d_H(X,I) < \delta$.  Since $X$ has span zero, by \cite{lelek79} we have that $X$ is tree-like.  Therefore, there exists a tree $T \subset [0,1]^{\mathbb{N}}$ and a map $f: X \to T$ such that $\dsup(f,\id_X) < \delta$.  It follows that $d_H(X,T) < \delta$.  Hence, by choice of $\delta$, the set $M = \{(x,y) \in T \times (I \smallsetminus \{p,q\}): d(x,y) < \frac{\varepsilon}{6}\}$ separates $T \times \{p\}$ from $T \times \{q\}$ in $T \times I$.

Let $\pi_1: T \times I \to T$ and $\pi_2: T \times I \to I$ denote the first and second coordinate projections, respectively.  Since $M$ is open, by Theorem \ref{thm:lift} there is a map $h: X \to M$ such that $\dsup(f, \pi_1 \circ h) < \frac{\varepsilon}{6}$.

We claim that $\pi_2 \circ h: X \to I$ is such that $\dsup(\pi_2 \circ h, \id_X) < \frac{\varepsilon}{2}$, which means that $\pi_2 \circ h$ is an $\varepsilon$-map.  Indeed, given $x \in X$, we have
\begin{align*}
d(x, \pi_2 \circ h(x)) &\leq d(x,f(x)) + d(f(x), \pi_1 \circ h(x)) + d(\pi_1 \circ h(x), \pi_2 \circ h(x)) \\
&< \delta + \frac{\varepsilon}{6} + \frac{\varepsilon}{6} \qquad \textrm{since $\dsup(f,\id_X) < \delta$, $\dsup(f, \pi_1 \circ h) < \frac{\varepsilon}{6}$, and $h(x) \in M$} \\
&\leq \frac{\varepsilon}{2} \qquad \textrm{since $\delta \leq \frac{\varepsilon}{6}$.}
\end{align*}

Therefore $X$ is arc-like.  Because $X$ is hereditarily indecomposable and arc-like, it is homeomorphic to the pseudo-arc \cite{bing51}.
\end{proof}
\setcounter{thm}{20}

\section{Discussion and questions}
\label{sec:discussion}

A closely related classification problem of significant interest is: What are all the homogeneous hereditarily indecomposable continua?  This question was asked by Jones in \cite{jones55s}.  It is known, by results of Prajs and Krupski \cite{KP90} and of Rogers \cite{rogers82}, that a homogeneous continuum is hereditarily indecomposable if and only if it is tree-like.  Thus far, the pseudo-arc is the only known example of a non-degenerate homogeneous tree-like continuum.

\begin{question}
If $X$ is a homogeneous tree-like (equivalently, hereditarily indecomposable) continuum, must $X$ be homeomorphic to the pseudo-arc?
\end{question}

By the results of this paper, if there is another such continuum, it would necessarily be non-planar.  An affirmative answer to this question would follow if one could prove that every homogeneous tree-like continuum has span zero.  The question of whether every homogeneous tree-like continuum has span zero was raised by Ingram in \cite[Problem 93]{Houston95}.

\bigskip
Theorem \ref{thm:lift} can also be applied to the study of hereditarily equivalent spaces.  A continuum $X$ is \emph{hereditarily equivalent} if $X$ is homeomorphic to each of its non-degenerate subcontinua.  In a forthcoming paper \cite{HO2015}, the authors use Theorem \ref{thm:lift} to show that the only non-degenerate hereditarily equivalent plane continua are the arc and the pseudo-arc.

\bigskip
Recall from the comments immediately preceding Theorem \ref{thm:span separator} that a continuum $X$ has \emph{surjective semispan zero} \cite{lelek77} if every subcontinuum $Z \subseteq X \times X$ with $\pi_2(Z) = X$ meets the diagonal $\Delta X = \{(x,x): x \in X\}$.  It is proved in \cite{OT84} that any continuum with surjective semispan zero is tree-like.  Our proof of Theorem \ref{thm:hered indec span zero} in fact establishes the following slightly stronger characterization of the pseudo-arc:

\newtheorem*{thm semispan}{Theorem \ref{thm:hered indec span zero}$'$}
\begin{thm semispan}
A continuum $X$ is homeomorphic to the pseudo-arc if and only if $X$ is hereditarily indecomposable and has surjective semispan zero.
\end{thm semispan}

It is clear that every continuum with span zero has surjective semispan zero, but it is not known whether these two properties are equivalent.

\begin{question}[cf.\ {\cite[Problem 59]{Houston95}}]
Does every continuum with surjective semispan zero have span zero?
\end{question}

\bigskip
Besides the property of span zero, another property related to arc-likeness is weak chainability: a continuum is \emph{weakly chainable} if it is the continuous image of an arc-like continuum.  This concept was introduced by Lelek \cite{lelek62} who used an equivalent formulation involving weak chain covers.  Lelek \cite{lelek62} and Fearnley \cite{fearnley64} independently proved the equivalence of these two notions, and observed that every arc-like continuum is the continuous image of the pseudo-arc, which means that a continuum is weakly chainable if and only if it is the continuous image of the pseudo-arc.

It is known that all arc-like continua have span zero \cite{lelek64}, and all span zero continua are weakly chainable \cite{OT84}.  Therefore, if the answer to the following question is affirmative, it would yield a still stronger characterization of the pseudo-arc than our Theorem \ref{thm:hered indec span zero}.

\begin{question}
If $X$ is a hereditarily indecomposable and weakly chainable continuum, must $X$ be homeomorphic to the pseudo-arc?
\end{question}

It is known (see e.g.\ \cite{LR74} and \cite{mclean72}) that a hereditarily indecomposable and weakly chainable continuum must be tree-like.

\bigskip
It is possible to formulate a version of Theorem \ref{thm:lift} without any mention of separators in the product of a graph with an arc, which more directly generalizes Theorem \ref{thm:factor fold}.  To this end, we give a generalization of the notion of a simple fold (Definition \ref{defn:simple fold}), which is inspired by our definition of a stairwell structure (Definition \ref{defn:stairwell}).

\begin{defn}
\label{defn:folding map}
A \emph{folding map} on $G$ is a graph $F = F_1 \cup \cdots \cup F_k$ and a function $\varphi: F \to G$, called the \emph{projection}, which satisfy the following properties.  Let $G_i = \varphi(F_i)$ for $i = 1,\ldots,k$.
\begin{enumerate}[label=\textbf{(FM\arabic{*})}]
\item $k$ is odd, and $G_1,\ldots,G_k$ are regular subsets of $G$;
\item For each $i = 1,\ldots,k$, $\partial G_i = A_i \cup B_i$, where $A_i$ and $B_i$ are disjoint finite sets, $A_1 = B_k = \emptyset$, and $B_i = A_{i+1}$ for each $i = 1,\ldots,k-1$;
\item For each $i = 1,\ldots,k-1$ there is a neighborhood $V$ of $B_i = A_{i+1}$ such that $G_i \cap V = G_{i+1} \cap V$;
\item For each $i = 1,\ldots,k$, $G_i$ has consistent complement relative to $A_i$ and to $B_i$;
\item $\varphi {\upharpoonright}_{F_i}$ is a homeomorphism $F_i \to G_i$ for each $i = 1,\ldots,k$; and
\item $\varphi(F_i \cap F_{i+1}) = B_i = A_{i+1}$ for each $i = 1,\ldots,k-1$, and $F_i \cap F_j = \emptyset$ whenever $|i - j| > 1$.
\end{enumerate}
\end{defn}

It is straightforward to see that given a folding map $\varphi: F \to G$ to a connected graph $G$, one can construct a set $S \subset G \times (0,1)$ with a stairwell structure corresponding to $\varphi$ as in the proof of Theorem \ref{thm:lift}.  In this way, one can prove the following result.

\begin{thm}
\label{thm:factor folding map}
A compactum $X$ is hereditarily indecomposable if and only if for any map $f: X \to G$ to a connected graph $G$, for any folding map $\varphi: F \to G$, and for any $\varepsilon > 0$, there exists a map $g: X \to F$ such that $\dsup(f, \varphi \circ g) < \varepsilon$.
\end{thm}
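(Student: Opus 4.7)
My plan is to prove both directions by leveraging Theorems \ref{thm:factor fold} and \ref{thm:lift}.

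\emph{Reverse direction} ($\Leftarrow$): I would first observe that every simple fold in the sense of Definition \ref{defn:simple fold} is a folding map with $k = 3$. Given a simple fold with sets $G_1, G_2, G_3$, I set $A_1 = B_3 = \emptyset$, $A_2 = B_1 = \partial G_1$, and $A_3 = B_2 = \partial G_3$; the axioms (FM1)--(FM6) of Definition \ref{defn:folding map} then follow from Definition \ref{defn:simple fold} together with Lemma \ref{lem:fold basic}. In particular, (FM3) follows from condition \ref{enum:separate} (near $\partial G_1$, no points of $\overline{G_1 \smallsetminus G_2}$ intrude, so $G_1$ coincides with $G_2 = G_1 \cap G_3$ locally), and (FM4) was essentially verified inside the proof of Proposition \ref{prop:define fold}. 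Specializing to $G = [0,1]$ and $F$ an arc, the hypothesis then yields condition \ref{enum:factor fold 3} of Theorem \ref{thm:factor fold}, which forces $X$ to be hereditarily indecomposable.

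\emph{Forward direction} ($\Rightarrow$): Assume $X$ is hereditarily indecomposable and fix $f : X \to G$, a folding map $\varphi : F \to G$ with $F = F_1 \cup \cdots \cup F_k$, and $\varepsilon > 0$. The strategy is to realize $\varphi$ as a stairwell structure inside $G_\star$ and then invoke Theorem \ref{thm:lift}. Concretely, I choose distinct heights $0 < h_1 < \cdots < h_k < 1$ and, for each $i = 1, \ldots, k-1$ and each $b \in B_i = A_{i+1}$, an intermediate value $\tau_i(b) \in (h_i, h_{i+1})$. Then I take continuous height functions $s_i : G_i \to (0,1)$ which equal $h_i$ off small neighborhoods of the finite set $\partial G_i = A_i \cup B_i$, take values $\tau_i$ on $B_i$ and $\tau_{i-1}$ on $A_i$, and interpolate between. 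Letting $S_i = \{(x, s_i(x)) : x \in G_i\}$ and $S = S_1 \cup \cdots \cup S_k$, the gaps $h_{i+1} - h_i$ keep non-adjacent sheets separated, while $s_i = s_{i+1}$ on $B_i$ by construction. Consequently, the map $\rho : F \to S$ defined by $\rho {\upharpoonright}_{F_i}(p) = (\varphi(p), s_i(\varphi(p)))$ is a homeomorphism with $\pi_1 \circ \rho = \varphi$, and $\langle S_1, \ldots, S_k \rangle$ is a stairwell structure of odd height $k$ for $S$ (the axioms \ref{enum:straight}--\ref{enum:generic} of Definition \ref{defn:stairwell} follow from (FM1)--(FM6) together with this explicit description).

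Since $G$ is connected and $k$ is odd, Proposition \ref{prop:odd separate} gives that $S$ separates $G \times \{0\}$ from $G \times \{1\}$ in $G_\star$. Using that $S$ is a compact graph (hence an ANR), I pick a neighborhood $U$ of $S$ and a retraction $r : U \to S$ small enough that $\dsup(\pi_1 \circ r, \pi_1 {\upharpoonright}_U) < \varepsilon / 2$. Applying Theorem \ref{thm:lift} with $M = S$ and this $U$, I obtain $h : X \to U$ with $\dsup(f, \pi_1 \circ h) < \varepsilon / 2$. Setting $g = \rho^{-1} \circ r \circ h : X \to F$, I have $\varphi \circ g = \pi_1 \circ \rho \circ \rho^{-1} \circ r \circ h = \pi_1 \circ r \circ h$, so
\[ \dsup(f, \varphi \circ g) \leq \dsup(f, \pi_1 \circ h) + \dsup(\pi_1 \circ h, \pi_1 \circ r \circ h) < \varepsilon . \]

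The main obstacle is the careful construction of the height functions $s_i$ and the subsequent verification that $\rho$ is injective and that $\langle S_1, \ldots, S_k \rangle$ is a genuine stairwell structure. Consistency of complement (axiom \ref{enum:consistent}) transfers directly from (FM4); the delicate point is genericity (axiom \ref{enum:generic}), which may require a mild preliminary perturbation of the folding map so that the finite sets $A_2, \ldots, A_k$ lie in pairwise disjoint, non-branch, non-endpoint positions of $G$. Such perturbations are possible because the folding map axioms remain stable under small moves of the boundaries $\partial G_i$ along $G$, after which the explicit height-function construction goes through cleanly.
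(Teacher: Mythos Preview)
Your proposal is correct and follows essentially the same approach as the paper, which only sketches the proof by remarking that one can construct a set $S \subset G \times (0,1)$ with a stairwell structure corresponding to the folding map $\varphi$ (as in the converse direction of Theorem~\ref{thm:lift}) and then invoke Theorem~\ref{thm:lift}. You supply considerably more detail than the paper's one-sentence sketch --- the explicit height-function construction of $S$, the homeomorphism $\rho$ with $\pi_1 \circ \rho = \varphi$, and the retraction argument --- and you correctly flag the genericity issue \ref{enum:generic}, which the paper does not mention.
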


Observe that the linear ordering of the sets $F_1,\ldots,F_k$, where each of these sets meets only its immediate successor and predecessor, is an essential feature which causes the correspondance between folding maps and sets in $G \times (0,1)$ with stairwell structures (for connected graphs $G$).  However, inspired by the notion of a broken stairwell structure, one could formulate a more general concept of a folding map, in which the adjacency relation on the sets $F_1,\ldots,F_k$ (here we say $F_i$ and $F_j$ are \emph{adjacent} if $F_i \cap F_j \neq \emptyset$) is a tree (or more generally any graph), instead of an arc (linear order).

\begin{question}
Can one prove a version of Theorem \ref{thm:factor fold} (and Theorem \ref{thm:factor folding map}) which pertains to a notion of folding maps $\varphi: F \to G$ for which the subgraphs of $F$ on which $\varphi$ is one-to-one are allowed to have an adjacency relation which is a tree?  More generally, under what conditions on this adjacency relation does there exist, for any map $f: X \to G$ from a hereditarily indecomposable compactum $X$ and any $\varepsilon > 0$, a map $g: X \to F$ such that $\dsup(f, \varphi \circ g) < \varepsilon$?
\end{question}

\bibliographystyle{amsplain}
\bibliography{Separators}

\providecommand{\bysame}{\leavevmode\hbox to3em{\hrulefill}\thinspace}
\providecommand{\MR}{\relax\ifhmode\unskip\space\fi MR }
\providecommand{\MRhref}[2]{%
  \href{http://www.ams.org/mathscinet-getitem?mr=#1}{#2}
}
\providecommand{\href}[2]{#2}
\begin{thebibliography}{10}

\bibitem{AO90}
J.~M. Aarts and L.~G. Oversteegen, \emph{The product structure of homogeneous
  spaces}, Indag. Math. (N.S.) \textbf{1} (1990), no.~1, 1--5. \MR{1054760
  (91m:54006)}

\bibitem{bing48}
R.~H. Bing, \emph{A homogeneous indecomposable plane continuum}, Duke Math. J.
  \textbf{15} (1948), 729--742. \MR{0027144 (10,261a)}

\bibitem{bing51}
\bysame, \emph{Concerning hereditarily indecomposable continua}, Pacific J.
  Math. \textbf{1} (1951), 43--51. \MR{0043451 (13,265b)}

\bibitem{bing60}
\bysame, \emph{A simple closed curve is the only homogeneous bounded plane
  continuum that contains an arc}, Canad. J. Math. \textbf{12} (1960),
  209--230. \MR{0111001}

\bibitem{BJ59}
R.~H. Bing and F.~Burton Jones, \emph{Another homogeneous plane continuum},
  Trans. Amer. Math. Soc. \textbf{90} (1959), 171--192. \MR{0100823 (20
  \#7251)}

\bibitem{cohen51}
Herman~J. Cohen, \emph{Some results concerning homogeneous plane continua},
  Duke Math. J. \textbf{18} (1951), 467--474. \MR{0040652 (12,729c)}

\bibitem{Houston95}
Howard Cook, W.~T. Ingram, and Andrew Lelek, \emph{A list of problems known as
  {H}ouston problem book}, Continua ({C}incinnati, {OH}, 1994), Lecture Notes
  in Pure and Appl. Math., vol. 170, Dekker, New York, 1995, pp.~365--398.
  \MR{1326857 (96f:54042)}

\bibitem{davis84}
James~Francis Davis, \emph{The equivalence of zero span and zero semispan},
  Proc. Amer. Math. Soc. \textbf{90} (1984), no.~1, 133--138. \MR{722431
  (85k:54036)}

\bibitem{fearnley64}
Lawrence Fearnley, \emph{Characterizations of the continuous images of the
  pseudo-arc}, Trans. Amer. Math. Soc. \textbf{111} (1964), 380--399.
  \MR{0163293}

\bibitem{fearnley69}
\bysame, \emph{The pseudo-circle is not homogeneous}, Bull. Amer. Math. Soc.
  \textbf{75} (1969), 554--558. \MR{0242126 (39 \#3460)}

\bibitem{hagopian75}
Charles~L. Hagopian, \emph{Homogeneous plane continua}, Houston J. Math.
  \textbf{1} (1975), 35--41. \MR{0383369 (52 \#4250)}

\bibitem{hagopian76}
\bysame, \emph{Indecomposable homogeneous plane continua are hereditarily
  indecomposable}, Trans. Amer. Math. Soc. \textbf{224} (1976), no.~2, 339--350
  (1977). \MR{0420572 (54 \#8586)}

\bibitem{hoehn11}
L.~C. Hoehn, \emph{A non-chainable plane continuum with span zero}, Fund. Math.
  \textbf{211} (2011), no.~2, 149--174. \MR{2747040 (2012d:54035)}

\bibitem{HO2015}
L.~C. Hoehn and L.~G. Oversteegen, \emph{A complete classification of
  hereditarily equivalent plane continua}, (in preparation).

\bibitem{jones51}
F.~Burton Jones, \emph{Certain homogeneous unicoherent indecomposable
  continua}, Proc. Amer. Math. Soc. \textbf{2} (1951), 855--859. \MR{0045372
  (13,573a)}

\bibitem{jones55}
\bysame, \emph{On a certain type of homogeneous plane continuum}, Proc. Amer.
  Math. Soc. \textbf{6} (1955), 735--740. \MR{0071761 (17,180e)}

\bibitem{jones55s}
\bysame, \emph{On homogeneity}, {S}ummary of {L}ectures and {S}eminars,
  {S}ummer {I}nstitute on {S}et-{T}heoretic {T}opology, {M}adison, Amer. Math.
  Soc., 1955, revised 1958, pp.~68--70.

\bibitem{jones69}
\bysame, \emph{Homogeneous plane continua}, Proceedings of the {A}uburn
  {T}opology {C}onference ({A}uburn {U}niv., {A}uburn, {A}la., 1969; dedicated
  to {F}. {B}urton {J}ones on the occasion of his 60th birthday), Auburn Univ.,
  Auburn, Ala., 1969, pp.~46--56. \MR{0391040 (52 \#11862)}

\bibitem{jones73}
\bysame, \emph{Homogeneous continua}, Proceedings of the {I}nternational
  {S}ymposium on {T}opology and its {A}pplications ({B}udva, 1972), Savez Dru\v
  stava Mat. Fiz. i Astronom., Belgrade, 1973, pp.~129--131. \MR{0334158 (48
  \#12477)}

\bibitem{jones75}
\bysame, \emph{Use of a new technique in homogeneous continua}, Houston J.
  Math. \textbf{1} (1975), no.~1, 57--61. \MR{0385819 (52 \#6678)}

\bibitem{KY94}
Judy~A. Kennedy and James~A. Yorke, \emph{Pseudocircles in dynamical systems},
  Trans. Amer. Math. Soc. \textbf{343} (1994), no.~1, 349--366. \MR{1187029
  (94g:58166)}

\bibitem{knaster22}
B.~Knaster, \emph{Un continu dont tout sous-continu est ind\'{e}composable},
  Fund. Math. \textbf{3} (1922), 247--286.

\bibitem{KK20}
B.~Knaster and C.~Kuratowski, \emph{Probl\`{e}me 2}, Fund. Math. \textbf{1}
  (1920), 223.

\bibitem{KM77}
J{\'o}zef Krasinkiewicz and Piotr Minc, \emph{Mappings onto indecomposable
  continua}, Bull. Acad. Polon. Sci. S\'er. Sci. Math. Astronom. Phys.
  \textbf{25} (1977), no.~7, 675--680. \MR{0464184 (57 \#4119)}

\bibitem{KP90}
Pawe{\l} Krupski and Janusz~R. Prajs, \emph{Outlet points and homogeneous
  continua}, Trans. Amer. Math. Soc. \textbf{318} (1990), no.~1, 123--141.
  \MR{937246 (90f:54054)}

\bibitem{kuratowski68}
K.~Kuratowski, \emph{Topology. {V}ol. {II}}, New edition, revised and
  augmented. Translated from the French by A. Kirkor, Academic Press, New York,
  1968.

\bibitem{lehner61}
G.~R. Lehner, \emph{Extending homeomorphisms on the pseudo-arc}, Trans. Amer.
  Math. Soc. \textbf{98} (1961), 369--394. \MR{0120608}

\bibitem{lelek62}
A.~Lelek, \emph{On weakly chainable continua}, Fund. Math. \textbf{51}
  (1962/1963), 271--282. \MR{0143182}

\bibitem{lelek64}
\bysame, \emph{Disjoint mappings and the span of spaces}, Fund. Math.
  \textbf{55} (1964), 199--214. \MR{0179766 (31 \#4009)}

\bibitem{lelek77}
\bysame, \emph{On the surjective span and semispan of connected metric spaces},
  Colloq. Math. \textbf{37} (1977), no.~1, 35--45. \MR{0482680 (58 \#2737)}

\bibitem{lelek79}
\bysame, \emph{The span of mappings and spaces}, The {P}roceedings of the 1979
  {T}opology {C}onference ({O}hio {U}niv., {A}thens, {O}hio, 1979), vol.~4,
  1979, pp.~631--633 (1980). \MR{598299 (82c:54031)}

\bibitem{LR74}
A.~Lelek and David~R. Read, \emph{Compositions of confluent mappings and some
  other classes of functions}, Colloq. Math. \textbf{29} (1974), 101--112.
  \MR{0367900}

\bibitem{lewis69}
Wayne Lewis, \emph{Stable homeomorphisms of the pseudo-arc}, Canad. J. Math.
  \textbf{31} (1979), no.~2, 363--374. \MR{528817}

\bibitem{lewis80}
\bysame, \emph{Homogeneous hereditarily indecomposable continua}, The
  {P}roceedings of the 1980 {T}opology {C}onference ({U}niv. {A}labama,
  {B}irmingham, {A}la., 1980), vol.~5, 1980, pp.~215--222 (1981). \MR{624474
  (82k:54057)}

\bibitem{lewis83}
\bysame, \emph{Homogeneous continua and continuous decompositions}, Proceedings
  of the 1983 topology conference ({H}ouston, {T}ex., 1983), vol.~8, 1983,
  pp.~71--84. \MR{738471 (86a:54041)}

\bibitem{lewis84}
\bysame, \emph{Homogeneous curves}, Proceedings of the 1984 topology conference
  ({A}uburn, {A}la., 1984), vol.~9, 1984, pp.~85--98. \MR{781553 (86g:54050)}

\bibitem{lewis92}
\bysame, \emph{The classification of homogeneous continua}, Soochow J. Math.
  \textbf{18} (1992), no.~1, 85--121. \MR{1157611 (93d:54049)}

\bibitem{LM10}
Wayne Lewis and Piotr Minc, \emph{Drawing the pseudo-arc}, Houston J. Math.
  \textbf{36} (2010), no.~3, 905--934. \MR{2727011}

\bibitem{mazurkiewicz24}
S.~Mazurkiewicz, \emph{Sur les continus homog\`{e}nes}, Fund. Math. \textbf{5}
  (1924), 137--146.

\bibitem{mclean72}
T.~Bruce McLean, \emph{Confluent images of tree-like curves are tree-like},
  Duke Math. J. \textbf{39} (1972), 465--473. \MR{0305372}

\bibitem{MR89}
Michael~W. Mislove and James~T. Rogers, Jr., \emph{Local product structures on
  homogeneous continua}, Topology Appl. \textbf{31} (1989), no.~3, 259--267.
  \MR{997493 (90m:54044a)}

\bibitem{MR90}
\bysame, \emph{Addendum: ``{L}ocal product structures on homogeneous
  continua''}, Topology Appl. \textbf{34} (1990), no.~2, 209. \MR{1041775
  (90m:54044b)}

\bibitem{moise48}
Edwin~E. Moise, \emph{An indecomposable plane continuum which is homeomorphic
  to each of its nondegenerate subcontinua}, Trans. Amer. Math. Soc.
  \textbf{63} (1948), 581--594. \MR{0025733 (10,56i)}

\bibitem{nadler92}
Sam~B. Nadler, Jr., \emph{Continuum theory: an introduction}, Monographs and
  Textbooks in Pure and Applied Mathematics, vol. 158, Marcel Dekker Inc., New
  York, 1992.

\bibitem{OT82}
L.~G. Oversteegen and E.~D. Tymchatyn, \emph{Plane strips and the span of
  continua {I}}, Houston J. Math. \textbf{8} (1982), no.~1, 129--142.
  \MR{666153 (84h:54030)}

\bibitem{OT84}
\bysame, \emph{On span and weakly chainable continua}, Fund. Math. \textbf{122}
  (1984), no.~2, 159--174. \MR{753022 (85m:54034)}

\bibitem{OT86}
\bysame, \emph{On hereditarily indecomposable compacta}, Geometric and
  algebraic topology, Banach Center Publ., vol.~18, PWN, Warsaw, 1986,
  pp.~407--417. \MR{925879 (88m:54044)}

\bibitem{rogers70}
James~T. Rogers, Jr., \emph{The pseudo-circle is not homogeneous}, Trans. Amer.
  Math. Soc. \textbf{148} (1970), 417--428. \MR{0256362 (41 \#1018)}

\bibitem{rogers81}
\bysame, \emph{Homogeneous, separating plane continua are decomposable},
  Michigan Math. J. \textbf{28} (1981), no.~3, 317--322. \MR{629364}

\bibitem{rogers82}
\bysame, \emph{Homogeneous hereditarily indecomposable continua are tree-like},
  Houston J. Math. \textbf{8} (1982), no.~3, 421--428. \MR{684167 (84b:54072)}

\bibitem{rogers83}
\bysame, \emph{Homogeneous continua}, Proceedings of the 1983 topology
  conference ({H}ouston, {T}ex., 1983), vol.~8, 1983, pp.~213--233. \MR{738476
  (85c:54055)}

\bibitem{rogers92}
\bysame, \emph{Classifying homogeneous continua}, Proceedings of the
  {S}ymposium on {G}eneral {T}opology and {A}pplications ({O}xford, 1989),
  vol.~44, 1992, pp.~341--352. \MR{1173271 (94c:54061)}

\bibitem{rudin97}
Mary~Ellen Rudin, \emph{The early work of {F}. {B}.\ {J}ones}, Handbook of the
  history of general topology, {V}ol. 1, Kluwer Acad. Publ., Dordrecht, 1997,
  pp.~85--96. \MR{1617573 (99e:01029)}

\bibitem{sierpinski20}
Wac{\l}aw Sierpi{\'n}ski, \emph{Sur une propri\'{e}t\'{e} topologique des
  ensembles d\'{e}nombrables denses en soi}, Fund. Math. \textbf{1} (1920),
  11--16.

\end{thebibliography}

\end{document}